\theoremstyle{plain}
\newtheorem{theorem}{$\indent$Theorem}
\newtheorem{corollary}{$\indent$Corollary}
\newtheorem{lemma}{$\indent$Lemma}
\theoremstyle{definition}
\newtheorem{definition}{$\indent$Definition}
\theoremstyle{remark}
\newtheorem{remark}{$\indent$Remark}
\newtheorem{example}{$\indent$Example}
\renewcommand \l[1]{\left#1}
\renewcommand \r[1]{\right#1}
\def\blfootnote{\xdef\@thefnmark{}\@footnotetext}
\newenvironment{prfthm1}[1][Proof of Theorem 1]{\textit{#1.}\:\:}{\: \rule{0.7em}{0.7em}}
\newenvironment{prfthm2}[1][Proof of Theorem 2]{\textit{#1.}\:\:}{\: \rule{0.7em}{0.7em}}
\newenvironment{prfapthm}[1][Proof]{\textit{#1.}\:\:}{\: \rule{0.7em}{0.7em}}
\DeclareMathOperator{\re}{Re}
\DeclareMathOperator{\ExpV}{E}
\DeclareMathOperator{\Var}{Var}
\DeclareMathOperator{\Corr}{Corr}
\DeclareMathOperator{\Prob}{P}
\DeclareMathOperator{\diag}{diag}
\newcommand \OpLim[3]{\overset{#3}{\underset{#2}{#1}}}
\begin{document}

\author{A.V. Ivanov, N.N. Leonenko, I.V. Orlovskyi}
\title{On the Whittle estimator for linear random noise spectral density parameter in continuous-time nonlinear regression models}
\renewcommand\footnotemark{}
\blfootnote{\hspace*{-6mm}A.V. Ivanov \\
              National Technical University of Ukraine "Igor Sikorsky Kyiv Polytechnic Institute",\\
               Department of mathematical analysis and probability theory,\\
               Peremogi avenue 37, Kyiv, Ukraine \\
              Tel.: +380-44-2049740\\
              E-mail: alexntuu@gmail.com\smallskip  \\
           N.N. Leonenko \\
              Cardiff University, \\
              School of Mathematics,\\
              Senghennydd Road, Cardiff CF24 4AG, United Kingdom\\
              Tel.: +44(0)-29-2087-552\\
              E-mail: LeonenkoN@cardiff.ac.uk\smallskip\\
           I.V. Orlovskyi \\
              National Technical University of Ukraine "Igor Sikorsky Kyiv Polytechnic Institute"\\
              Department of mathematical analysis and probability theory,\\
              Peremogi avenue 37, Kyiv, Ukraine \\
              Tel.: +380-44-2049740\\
              E-mail: orlovskyi@matan.kpi.ua}
\date{}
\maketitle

\begin{abstract}
 A continuous-time nonlinear regression model with Lévy-driven linear noise process is considered. Sufficient conditions of consistency and asymptotic normality of the Whittle estimator for the parameter of the noise spectral density are obtained in the paper.\smallskip\\
 \textbf{Keywords:} Nonlinear regression model, Lévy-driven linear noise process, the least squares estimator, spectral density, Whittle estimator, consistency, asymptotic normality, Levitan polynomials
\end{abstract}

\section{Introduction}\label{sec_Intro}

$\indent$The paper is focused on such an important aspect of the study of regression models with correlated observations as an estimation of random noise functional characteristics. When considering this problem the regression function unknown parameter becomes nuisance and complicates the analysis of noise. To neutralise its presence, we must estimate the parameter and then build estimators, say, of spectral density parameter of a stationary random noise using residuals, that is the difference between the values of the observed process and fitted regression function.

So, in the first step we employ the least squares estimator~(LSE) for unknown parameter of nonlinear regression, because of its relative simplicity. Asymptotic properties of the LSE in nonlinear regression model were studied by many authors. Numerous results on the subject can be found in monograph by Ivanov and Leonenko\cite{IvLeo_SAoRF_En}, Ivanov\cite{Iv_AToNR}.

In the second step we use the residual periodoram to estimate the unknown parameter of the noise spectral density using the Whittle-type contrast process~\cite{Wh_HTiTS,Wh_EaIiSTS}.

The results obtained at this time on the Whittle minimum contrast estimator~(MCE) form a developed theory that covers various mathematical models of stochastic processes and random fields. Some publications on the topic are Hannan~\cite{Han_MTS,Han_tAToLTSM}, Dunsmuir and Hannan~\cite{DunHan_VLTSM},  Guyon~\cite{Guy_PEfSPodDL}, Rosenblatt~\cite{Ros_SSaRF}, Fox and Taqqu~\cite{FoxTa_LSPoPE4SDSGTS}, Dahlhaus~\cite{Dah_EPE4SSP}, Heyde and Gay~\cite{HeGay_oAQLSP, HeGay_SPAaE4P&FwPLRD}, Giraitis and Surgailis~\cite{GiSur_CLT4QFiSDLVaiA}, Giraitis and Taqqu~\cite{GiTa_WE4FVnGTSwLM}, Gao et al~\cite{Gaoetal_PEoSPwLRD&I}, Gao~\cite{Gao_MLRDGPwAiCTFM}, Leonenko and Sakhno~\cite{LeoSa_oWE4SCoCPRP}, Bahamonde and Doukhan~\cite{BaDou_SEitPoMD}, Ginovyan and Sahakyan~\cite{GiSah_RE4CTLMwM}, Avram et al~\cite{AvLeoSa_oSTLTHUBLIetc}, Anh et al~\cite{AnhLeoSa_oCoMCE4FSP}, Bai et al~\cite{BaiGiTa_LT4QFoLDCTLP}, Ginovyan et al~\cite{GiSaTa_tTP4TM&OaiIiP}, Giraitis et al~\cite{GiTaTa_ANoQFoMD}.

In the article by Koul and Surgailis\cite{KoSur_ANoWEiLRMwLME} in the linear regression model the asymptotic properties of the Whittle estimator of strongly dependent random noise spectral density parameters were studied in a discrete-time setting.

In the paper by Ivanov and Prihod'ko\cite{IvPr_oWEoPoSDoRNiNRM} sufficient conditions on consistency and asymptotic normality of the Whittle estimator of the spectral density parameter of the Gaussian stationary random noise in continuous-time nonlinear regression model were obtained using residual periodogram. The current paper continues this research extending it to the case of the Lévy-driven linear random noise and more general classes of regression functions including trigonometric ones. We use the scheme of the proof in the case of Gaussian noise~\cite{IvPr_oWEoPoSDoRNiNRM} and some results of the papers~\cite{AvLeoSa_oSTLTHUBLIetc,AnhLeoSa_oCoMCE4FSP}. For linear random noise the proofs utilize essentially another types of limits theorems. In comparison with Gaussian case it leads to the use of special conditions on linear Lévy-driven random noise, new consistency and asymptotic normality conditions.

In the present publication continues-time model is considered. However, the results obtained can be also used for discrete time observations using the statements like Theorem 3 of Alodat and Olenko~\cite{AlOle_WCoWAFoLRDF} or Lemma 1 of Leonenko and Taufer~\cite{LeoTau_WCoFoSLMP2RtD}.
\section{Setting}\label{sec_Setting}

$\indent$Consider a regression model
 \begin{equation}
    X(t)=g(t,\,\alpha_0)+\varepsilon(t),\ t\ge 0,
  \label{c_n_reg_m}
 \end{equation}
where $g\;:\;(-\gamma,\,\infty)\times\mathcal{A}_\gamma\ \rightarrow\ \mathbb{R}$ is a continuous function, $\mathcal{A}\subset\mathbb{R}^q$ is an open convex set, $\mathcal{A}_\gamma=\bigcup\limits_{\|e\|\leq 1}\l(\mathcal{A}+\gamma e\r)$, $\gamma$ is some positive number, $\alpha_0\in\mathcal{A}$ is a true value of unknown parameter, and $\varepsilon$ is a random noise described below.

\begin{remark}
 The assumption about domain  $(-\gamma,\,\infty)$  for   function $g$ in $t$ is of technical nature and does not effect possible applications. This assumption makes it possible to formulate the condition \textbf{N$_2$}, which is used in the proof of Lemma \ref{lema_cnv_Dlta_2_0}.
\end{remark}

Throughout the paper $(\Omega,\,\mathcal{F},\,\Prob)$ denotes a complete probability space.

A Lévy process $L(t)$, $t\ge0$, is a stochastic process, with independent and stationary increments, continuous in probability, with sample-paths which are right-continuous with left limits (càdlàg) and $L(0)=0$. For a general treatment of Lévy processes we refer to Applebaum~\cite{App_LPaSC} and Sato~\cite{Sato_LPaIDP}.

Let $(a,\,b,\,\Pi)$ denote a characteristic triplet of the Lévy process $L(t)$, $t\ge0$, that is for all $t\ge0$
 \[
   \log \ExpV\exp\l\{\mathrm{i}zL(t)\r\}=t\kappa(z)
 \]
for all $z\in\mathbb{R}$, where
   \begin{equation}
    \kappa(z)=\mathrm{i}az-\frac12bz^2+\int\limits_{\mathbb{R}}\,\l(e^{\mathrm{i}zu}-1-\mathrm{i}z\tau(u)\r)\Pi(du),\ z\in\mathbb{R},
  \label{def_kappa}
 \end{equation}
where $a\in\mathbb{R}$, $b\ge0$, and
\[
  \tau(u)=\l\{\begin{array}{rl}
                  u, & |u|\le1; \\
                  \frac{u}{|u|}, & |u|>1.
              \end{array}
            \right.
\]
The Lévy measure $\Pi$ in \eqref{def_kappa} is a Radon measure on $\mathbb{R}\backslash\{0\}$ such that $\Pi(\{0\})=0$, and
 \[
   \int\limits_{\mathbb{R}}\,\min(1,\,u^2)\Pi(du)<\infty.
 \]

It is known that $L(t)$ has finite $p$-th moment for $p>0$ ($\ExpV|L(t)|^p<\infty$) if and only if
 \[
   \int\limits_{|u|\ge1}\,|u|^p\Pi(du)<\infty,
 \]
and $L(t)$ has finite $p$-th exponential moment for $p>0$ ($\ExpV\l[e^{pL(t)}\r]<\infty$) if and only if
 \begin{equation}
   \int\limits_{|u|\ge1}\,e^{pu}\Pi(du)<\infty,
  \label{bnd_int_e^luPi}
 \end{equation}
see, i.e., Sato~~\cite{Sato_LPaIDP}, Theorem 25.3.

If $L(t)$, $t\ge0$, is a Lévy process with characteristics $(a,\,b,\,\Pi)$, then the process $-L(t)$, $t\ge0$, is also a Lévy process with characteristics $(-a,\,b,\,\tilde{\Pi})$, where $\tilde{\Pi}(A)=\Pi(-A)$ for each Borel set $A$, modifying it to be càdlàg~\cite{AnhHeLeo_DMoLMPDbLP}.

We introduce a two-sided Lévy process $L(t)$, $t\in\mathbb{R}$, defined for $t<0$ to be equal an independent copy of $-L(-t)$.

Let $\hat{a}\,:\,\mathbb{R}\to\mathbb{R}_+$ be a measurable function. We consider the Lévy-driven continuous-time linear (or moving average) stochastic process
 \begin{equation}
    \varepsilon(t)=\int\limits_{\mathbb{R}}\, \hat{a}(t-s)dL(s),\ t\in\mathbb{R}.
  \label{LinRep_RmdnNse}
 \end{equation}
For causal process \eqref{LinRep_RmdnNse} $\hat{a}(t)=0,\ t<0$.

In the sequel we assume that
 \begin{equation}
    \hat{a}\in L_1(\mathbb{R})\cap L_2(\mathbb{R})\ \text{or}\ \hat{a}\in L_2(\mathbb{R}) \ \text{with}\ \ExpV L(1)=0.
  \label{hat_a_L_2&EL1=0}
 \end{equation}

Under the condition \eqref{hat_a_L_2&EL1=0} and
 \[
   \int\limits_{\mathbb{R}}\,u^2\Pi(du)<\infty,
 \]
the stochastic integral in \eqref{LinRep_RmdnNse} is well-defined in $L_2(\Omega)$ in the sense of stochastic integration introduced in Rajput and Rosinski \cite{RajRos_SRoIDP}.

The popular choices for the kernel in \eqref{LinRep_RmdnNse} are Gamma type kernels:
\begin{itemize}
  \item[$\cdot$] $\hat{a}(t)=t^\alpha e^{-\lambda t}\mathbb{I}_{[0,\,\infty)}(t)$, $\lambda>0$, $\alpha>-\frac12$;
  \item[$\cdot$] $\hat{a}(t)=e^{-\lambda t}\mathbb{I}_{[0,\,\infty)}(t)$, $\lambda>0$ (Ornstein-Uhlenbeck process);
  \item[$\cdot$] $\hat{a}(t)=e^{-\lambda|t|}$, $\lambda>0$ (well-balanced Ornstein-Uhlenbeck process).
\end{itemize}

\textbf{A$_1$.} The process $\varepsilon$ in \eqref{c_n_reg_m} is a measurable causal linear process of the form \eqref{LinRep_RmdnNse}, where a two-sides Lévy process $L$ is such that $\ExpV L(1)=0$, $\hat{a}\in L_1(\mathbb{R})\cap L_2(\mathbb{R})$. Moreover the Lévy measure $\Pi$ of $L(1)$ satisfies \eqref{bnd_int_e^luPi} for some $p>0$.

From the condition \textbf{A$_1$} it follows~\cite{AnhHeLeo_DMoLMPDbLP} for any $r\ge1$
 \begin{equation}
    \log \ExpV\exp\l\{\mathrm{i}\sum\limits_{j=1}^r\,z_j\varepsilon(t_j)\r\}= \int\limits_{\mathbb{R}}\,\kappa\l(\sum\limits_{j=1}^r\,z_j\hat{a}\l(t_j-s\r)\r)ds.
  \label{logEexp_isum_zEps}
 \end{equation}
In turn from \eqref{logEexp_isum_zEps} it can be seen that the stochastic process $\varepsilon$ is stationary in a strict sense.

Denote by
 \[
  \begin{aligned}
   m_r(t_1,\,\ldots,\,t_r)&=\ExpV \varepsilon(t_1)\ldots\varepsilon(t_r),\\
   c_r(t_1,\,\ldots,\,t_r)&=\mathrm{i}^{-r}\l.\dfrac{\partial^r}{\partial z_1\ldots\partial z_r}\,\log \ExpV\exp\l\{\mathrm{i}\sum\limits_{j=1}^r\,z_j\varepsilon(t_j)\r\}\,\r\vert_{z_1 = ... =z_r = 0}
  \end{aligned}
 \]
the moment and cumulant functions correspondingly of order $r,\ r\ge1$, of the process $\varepsilon$. Thus $m_2(t_1,\,t_2)=B(t_1-t_2)$, where
 \[
   B(t)=d_2\int\limits_{\mathbb{R}}\,\hat{a}(t+s)\hat{a}(s)ds,\ t\in\mathbb{R},
 \]
is a covariance function of $\varepsilon$, and the fourth moment function
 \begin{equation}
   \begin{aligned}
     m_4(t_1,\,t_2,\,t_3,\,t_4)=&c_4(t_1,\,t_2,\,t_3,\,t_4)+m_2(t_1,\,t_2)m_2(t_3,\,t_4)+\\
       &+m_2(t_1,\,t_3)m_2(t_2,\,t_4)+m_2(t_1,\,t_4)m_2(t_2,\,t_3).
   \end{aligned}
  \label{4ordr_mmnt_fn}
 \end{equation}

The explicit expression for cumulants of the stochastic process $\varepsilon$ can be obtained from \eqref{logEexp_isum_zEps} by direct calculations:
 \begin{equation}
    c_r(t_1,\,\ldots,\,t_r)=d_r\int\limits_{\mathbb{R}}\,\prod\limits_{j=1}^r\,\hat{a}\l(t_j-s\r)ds,
  \label{rep_c_r_by_hat-a}
 \end{equation}
where $d_r$ is the $r$-th cumulant of the random variable $L(1)$. In particular,
 \[
   d_2=\ExpV L^2(1)=-\kappa^{(2)}(0),\ \ \ d_4=\ExpV L^4(1) - 3\l(\ExpV L^2(1)\r)^2.
 \]

Under the condition \textbf{A$_1$}, the spectral densities of the stationary process $\varepsilon$ of all orders exist and can be obtained from \eqref{rep_c_r_by_hat-a} as
 \begin{equation}
    f_r(\lambda_1,\,\ldots,\,\lambda_{r-1})=(2\pi)^{-r+1}d_r\cdot a\l(-\sum\limits_{j=1}^{r-1}\,\lambda_j\r)\cdot \prod_{j=1}^{r-1}\,a(\lambda_j),
  \label{spec_den_H-ordr}
 \end{equation}
where $a\in L_2(\mathbb{R})$, $a(\lambda)=\int\limits_{\mathbb{R}}\,\hat{a}(t)e^{-\mathrm{i}\lambda t}dt$, $\lambda\in\mathbb{R}$, if complex-valued functions $f_r\in L_1\l(\mathbb{R}^{r-1}\r)$, $r>2$, see, e.g., \cite{AvLeoSa_oSTLTHUBLIetc} for definitions of the spectral densities of higher order $f_r,\ r\ge3$.

For $r=2$, we denote the spectral density of the second order by
 \[
   f(\lambda)=f_2(\lambda)=(2\pi)^{-1}d_2a(\lambda)a(-\lambda)=(2\pi)^{-1}d_2\l|a(\lambda)\r|^2.
 \]

\textbf{A$_2$.(i)} Spectral densities \eqref{spec_den_H-ordr} of all orders $f_r\in L_1(\mathbb{R}^{r-1})$, $r\ge2$;\\
\hspace*{11mm}\textbf{(ii)} $a(\lambda)=a\l(\lambda,\,\theta^{(1)}\r)$, $d_2=d_2\l(\theta^{(2)}\r)$, $\theta=\l(\theta^{(1)},\,\theta^{(2)}\r)\in\Theta_\tau$, $\Theta_\tau=\bigcup\limits_{\|e\|< 1}(\Theta+\tau e)$, $\tau>0$ is some number, $\Theta\subset\mathbb{R}^m$ is a bounded open convex set, that is $f(\lambda)=f(\lambda,\,\theta)$, $\theta\in\Theta_\tau$, and a true value of parameter $\theta_0\in\Theta$; \\
\hspace*{9mm}\textbf{(iii)} $f(\lambda,\,\theta)>0$, $(\lambda,\,\theta)\in\mathbb{R}\times\Theta^c$.

In the condition \textbf{A$_2$(ii)} above $\theta^{(1)}$ represents parameters of the kernel $\hat{a}$ in \eqref{LinRep_RmdnNse}, while $\theta^{(2)}$ represents parameters of Lévy process.

\begin{remark}
 The last part of the condition \textbf{A$_1$} is fully used in the proof of Lemma \ref{lema_AN_lin_fnctnl} and Theorem \ref{thm_AN4fncnl_appB} in Appendix \ref{app_LSE_AsymNorm}. The condition \textbf{A$_2$(i)} is fully used just in the proof of Lemma \ref{lema_AN_lin_fnctnl}. When we refer to these conditions in other places of the text we use them partially: see, for example, Lemma \ref{lema_cnv_J_T^eps}, where we need in the existence  of $f_4$ only.
\end{remark}

\begin{definition} \label{dfn_LSE}
 The least squares estimator (LSE) of the parameter $\alpha_0\in\mathcal{A}$ obtained by observations of the process $\l\{X(t),\ t\in[0,T]\r\}$ is said to be any random vector $\widehat{\alpha}_T=(\widehat{\alpha}_{1T},\,\ldots,\,\widehat{\alpha}_{qT})\in \mathcal{A}^c$ ($\mathcal{A}^c$ is the closure of $\mathcal{A}$), such that
  \[
    S_T\l(\widehat{\alpha}_T\r)= \min\limits_{\alpha\in\mathcal{A}^c}\,S_T(\alpha),\
    S_T(\alpha)=\int\limits_0^T\,\l(X(t)-g(t,\,\alpha)\r)^2dt.
  \]
\end{definition}

We consider the residual periodogram
 \[
   I_T(\lambda,\,\widehat{\alpha}_T)=(2\pi T)^{-1}\l|\int\limits_0^T\,\l(X(t)-g(t,\,\widehat{\alpha}_T)\r) e^{-\mathrm{i}t\lambda}dt\r|^2,\ \lambda\in \mathbb{R},
 \]
and the Whittle contrast field
 \begin{equation}
    U_T(\theta,\,\widehat{\alpha}_T)=\int\limits_{\mathbb{R}}\,\l(\log f(\lambda,\,\theta) +\dfrac{I_T\l(\lambda,\,\widehat{\alpha}_T\r)}{f(\lambda,\,\theta)}\r)w(\lambda)d\lambda,\ \theta\in\Theta^c,
  \label{Wh_ctrst_fn}
 \end{equation}
where $w(\lambda),\ \lambda\in\mathbb{R}$, is an even nonnegative bounded Lebesgue measurable function, for which the intgral \eqref{Wh_ctrst_fn} is well-defined. The existence of integral \eqref{Wh_ctrst_fn} follows from the condition \textbf{C$_4$} introduced below.

\begin{definition} \label{dfn_MCE}
 The minimum contrast estimator (MCE) of the unknown parameter $\theta_0\in\Theta$ is said to be any random vector $\widehat{\theta}_T=\l(\widehat{\theta}_{1T},...,\widehat{\theta}_{mT}\r)$ such that
  \[
    U_T\l(\widehat{\theta}_T,\,\widehat{\alpha}_T\r)=\min\limits_{\theta\in\Theta^c}\,U_T\l(\theta,\widehat{\alpha}_T\r).
  \]
\end{definition}

The minimum in the Definition \ref{dfn_MCE} is attained due to integral \eqref{Wh_ctrst_fn} continuity in $\theta\in\Theta^c$ as follows from the condition \textbf{C$_4$} introduced below.

\section{Consistency of the minimum contrast estimator}

$\indent$Suppose the function $g(t,\,\alpha)$ in \eqref{c_n_reg_m} is continuously differentiable with respect to $\alpha\in\mathcal{A}^c$ for any $t\ge0$, and its derivatives $g_i(t,\,\alpha)=\dfrac\partial{\partial\alpha_i}g(t,\,\alpha)$, $i=\overline{1,q}$, are locally integrable with respect to $t$. Let
 \[
   d_T(\alpha)=\diag\Bigl(d_{iT}(\alpha),\ i=\overline{1,q}\Bigr),\ d_{iT}^2(\alpha)=\int\limits_0^T\,g_i^2(t,\,\alpha)dt,
 \]
and $\underset{T\to\infty}\liminf\,T^{-\frac12}d_{iT}(\alpha)>0$, $i=\overline{1,q}$, $\alpha\in\mathcal{A}$.

Set
 \[
   \Phi_T(\alpha_1,\,\alpha_2)=\int\limits_0^T\, (g(t,\,\alpha_1)-g(t,\,\alpha_2))^2dt,\ \alpha_1,\,\alpha_2\in\mathcal{A}^c.
 \]

We assume that the following conditions are satisfied.

\textbf{C$_1$.} The LSE $\widehat{\alpha}_T$ is a weakly consistent estimator of $\alpha_0\in\mathcal{A}$ in the sense that
 \[
   T^{-\frac12}d_T(\alpha_0)\l(\widehat{\alpha}_T-\alpha_0\r)\ \overset{\Prob}\longrightarrow\ 0,\ \text{as}\ T\to\infty.
 \]

\textbf{C$_2$.} There exists a constant $c_0<\infty$ such that for any $\alpha_0\in\mathcal{A}$ and $T>T_0$, where $c_0$ and $T_0$ may depend on $\alpha_0$,
 \[
   \Phi_T(\alpha,\,\alpha_0)\leq c_0\|d_T(\alpha_0)\l(\alpha-\alpha_0\r)\|^2,\ \alpha\in\mathcal{A}^c.
 \]

The fulfillment of the conditions  C$_1$ and C$_2$ is discussed in more detail in Appendix \ref{app_LSE_cons}.

We need also in 3 more conditions.

\textbf{C$_3$.} $f(\lambda,\,\theta_1)\ne f(\lambda,\,\theta_2)$ on a set of positive Lebesgue measure once $\theta_1\ne\theta_2$, $\theta_1,\theta_2\in\Theta^c$.

\textbf{C$_4$.} The functions $w(\lambda)\log f(\lambda,\,\theta)$, $\dfrac{w(\lambda)}{f(\lambda,\,\theta)}$ are continuous with respect to $\theta\in\Theta^c$ almost everywhere in $\lambda\in\mathbb{R}$, and\\
\hspace*{14mm}\textbf{(i)} $w(\lambda)\l|\log f(\lambda,\,\theta)\r|\le Z_1(\lambda)$, $\theta\in\Theta^c$, almost everywhere in $\lambda\in\mathbb{R}$, and $Z_1(\cdot)\in L_1(\mathbb{R})$;\\
\hspace*{13mm}\textbf{(ii)} $\sup\limits_{\lambda\in\mathbb{R},\,\theta\in\Theta^c}\,\dfrac{w(\lambda)}{f(\lambda,\,\theta)} =c_1<\infty$.

\textbf{C$_5$.} There exists an even positive Lebesgue  measurable function $v(\lambda)$, $\lambda\in\mathbb{R}$,  such that\\
\hspace*{15mm}\textbf{(i)} $\dfrac{v(\lambda)}{f(\lambda,\,\theta)}$ is uniformly continuous in $(\lambda,\,\theta)\in\mathbb{R}\times\Theta^c$;\\
\hspace*{14mm}\textbf{(ii)} $\sup\limits_{\lambda\in\mathbb{R}}\,\dfrac{w(\lambda)}{v(\lambda)}<\infty$.

 \begin{theorem}\label{thm_MCE_cons}
  Under conditions  \textbf{A$_1$, A$_2$, C$_1$ -- C$_5$}\ \ $\widehat{\theta}_T\ \overset{\Prob}\longrightarrow\ \theta$, as $T\rightarrow\infty$.
 \end{theorem}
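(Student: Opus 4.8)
The plan is to argue in the classical way for minimum contrast estimators: exhibit a deterministic limiting contrast $U(\theta)$, check it is uniquely minimised at $\theta_0$ (understanding the limit in the statement as $\theta_0$), prove uniform convergence of $U_T(\cdot,\widehat\alpha_T)$ to $U$ on the compact closure $\Theta^c$, and conclude by the standard identification argument. Concretely, set
\[
 U(\theta)=\int_{\mathbb{R}}\Bigl(\log f(\lambda,\theta)+\frac{f(\lambda,\theta_0)}{f(\lambda,\theta)}\Bigr)w(\lambda)\,d\lambda ,\qquad\theta\in\Theta^c ,
\]
which is finite and, by \textbf{C$_4$} and dominated convergence, continuous in $\theta$. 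Writing $x=f(\lambda,\theta_0)/f(\lambda,\theta)>0$ and using the elementary inequality $x-1-\log x\ge0$, with equality only at $x=1$, gives
\[
 U(\theta)-U(\theta_0)=\int_{\mathbb{R}}w(\lambda)\bigl(x-1-\log x\bigr)\,d\lambda\ge0 ,
\]
and the right-hand side vanishes only when $f(\cdot,\theta)=f(\cdot,\theta_0)$ a.e.\ on $\{w>0\}$, which by \textbf{C$_3$} forces $\theta=\theta_0$. As $U$ is continuous on the compact set $\Theta^c$, $\varrho(\delta):=\inf_{\|\theta-\theta_0\|\ge\delta}\bigl(U(\theta)-U(\theta_0)\bigr)>0$ for each $\delta>0$.

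The heart of the proof is $\sup_{\theta\in\Theta^c}|U_T(\theta,\widehat\alpha_T)-U(\theta)|\overset{\Prob}{\longrightarrow}0$, which I would obtain in two stages. First, for the true parameter, $X(t)-g(t,\alpha_0)=\varepsilon(t)$ gives $U_T(\theta,\alpha_0)-U(\theta)=\int_{\mathbb{R}}\bigl(I_T(\lambda,\alpha_0)-f(\lambda,\theta_0)\bigr)\frac{w(\lambda)}{f(\lambda,\theta)}\,d\lambda$. Since $\ExpV I_T(\cdot,\alpha_0)=k_T*f(\cdot,\theta_0)$, where $k_T(\lambda)=(2\pi T)^{-1}\bigl|\int_0^Te^{-\mathrm{i}t\lambda}\,dt\bigr|^2$ is a nonnegative approximate identity on $\mathbb{R}$ and $f(\cdot,\theta_0)\in L_1(\mathbb{R})$ by \textbf{A$_2$(i)}, one has $\|\ExpV I_T(\cdot,\alpha_0)-f(\cdot,\theta_0)\|_{L_1(\mathbb{R})}\to0$, so by \textbf{C$_4$(ii)} ($w/f\le c_1$) the bias of $U_T(\theta,\alpha_0)$ tends to $0$ uniformly in $\theta$. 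The variance of $R_T(\theta):=\int_{\mathbb{R}}I_T(\lambda,\alpha_0)\frac{w(\lambda)}{f(\lambda,\theta)}\,d\lambda$ — a quadratic functional of $\varepsilon$ — tends to $0$ uniformly in $\theta$ by the covariance analysis of the continuous-time periodogram of a Lévy-driven linear process, i.e.\ the contributions built from $f(\cdot,\theta_0)$ and from the fourth-order spectral density $f_4$ (finite by \textbf{A$_2$(i)}, with $d_4$ finite by \textbf{A$_1$}), along the lines of \cite{AvLeoSa_oSTLTHUBLIetc,AnhLeoSa_oCoMCE4FSP}. Hence $R_T(\theta)\overset{\Prob}{\longrightarrow}R(\theta):=\int_{\mathbb{R}}f(\lambda,\theta_0)\frac{w(\lambda)}{f(\lambda,\theta)}\,d\lambda$ for each $\theta$, and this upgrades to uniform convergence on the compact $\Theta^c$ by stochastic equicontinuity: with $\omega$ the modulus of uniform continuity of $v/f$ on $\mathbb{R}\times\Theta^c$ and $w/v$ bounded (\textbf{C$_5$}),
\[
 \bigl|R_T(\theta_1)-R_T(\theta_2)\bigr|\le\omega(\|\theta_1-\theta_2\|)\int_{\mathbb{R}}I_T(\lambda,\alpha_0)\frac{w(\lambda)}{v(\lambda)}\,d\lambda ,
\]
and the last integral has expectation at most $\bigl(\sup_\lambda w/v\bigr)\|f(\cdot,\theta_0)\|_{L_1}$ uniformly in $T$, hence is $O_{\Prob}(1)$.

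For the second stage, put $\Delta_T(\lambda)=\int_0^T\bigl(g(t,\alpha_0)-g(t,\widehat\alpha_T)\bigr)e^{-\mathrm{i}t\lambda}\,dt$ and $J_T^{\varepsilon}(\lambda)=\int_0^T\varepsilon(t)e^{-\mathrm{i}t\lambda}\,dt$, so that from $X(t)-g(t,\widehat\alpha_T)=\varepsilon(t)+\bigl(g(t,\alpha_0)-g(t,\widehat\alpha_T)\bigr)$,
\[
 I_T(\lambda,\widehat\alpha_T)-I_T(\lambda,\alpha_0)=\frac{1}{2\pi T}\Bigl(2\,\re\bigl(\overline{J_T^{\varepsilon}(\lambda)}\,\Delta_T(\lambda)\bigr)+\bigl|\Delta_T(\lambda)\bigr|^2\Bigr) ,
\]
and, using \textbf{C$_4$(ii)},
\[
 \sup_{\theta\in\Theta^c}\bigl|U_T(\theta,\widehat\alpha_T)-U_T(\theta,\alpha_0)\bigr|\le\frac{c_1}{2\pi T}\Bigl(2\int_{\mathbb{R}}|J_T^{\varepsilon}(\lambda)|\,|\Delta_T(\lambda)|\,d\lambda+\int_{\mathbb{R}}|\Delta_T(\lambda)|^2\,d\lambda\Bigr) .
\]
By Parseval $\int_{\mathbb{R}}|\Delta_T(\lambda)|^2\,d\lambda=2\pi\,\Phi_T(\widehat\alpha_T,\alpha_0)$, which by \textbf{C$_2$} is at most $2\pi c_0 T\,\|T^{-1/2}d_T(\alpha_0)(\widehat\alpha_T-\alpha_0)\|^2=o_{\Prob}(T)$ by \textbf{C$_1$}; also $\int_{\mathbb{R}}|J_T^{\varepsilon}(\lambda)|^2\,d\lambda=2\pi\int_0^T\varepsilon^2(t)\,dt=O_{\Prob}(T)$ (its expectation is $2\pi T\,B(0)$), so Cauchy--Schwarz makes the cross term $o_{\Prob}(T)$ as well. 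Dividing by $T$ yields $\sup_{\theta\in\Theta^c}|U_T(\theta,\widehat\alpha_T)-U_T(\theta,\alpha_0)|\overset{\Prob}{\longrightarrow}0$, hence $\sup_{\theta\in\Theta^c}|U_T(\theta,\widehat\alpha_T)-U(\theta)|\overset{\Prob}{\longrightarrow}0$. To finish, fix $\delta>0$; on the event $\{\sup_{\theta\in\Theta^c}|U_T(\theta,\widehat\alpha_T)-U(\theta)|<\varrho(\delta)/3\}$, whose probability tends to $1$, minimality of $\widehat\theta_T$ gives $U(\widehat\theta_T)<U_T(\widehat\theta_T,\widehat\alpha_T)+\varrho(\delta)/3\le U_T(\theta_0,\widehat\alpha_T)+\varrho(\delta)/3<U(\theta_0)+2\varrho(\delta)/3$, so $U(\widehat\theta_T)-U(\theta_0)<\varrho(\delta)$ and therefore $\|\widehat\theta_T-\theta_0\|<\delta$; this proves $\widehat\theta_T\overset{\Prob}{\longrightarrow}\theta_0$.

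The step I expect to be the main obstacle is the uniform-in-$\theta$ vanishing of $\Var R_T(\theta)$ in the second paragraph: this is precisely where the Lévy-driven linear structure replaces the Gaussian computation of \cite{IvPr_oWEoPoSDoRNiNRM}, where one must invoke limit theorems for quadratic functionals of linear processes and control the fourth-order spectral density contribution, as in \cite{AvLeoSa_oSTLTHUBLIetc,AnhLeoSa_oCoMCE4FSP}; by contrast the plug-in step is essentially mechanical, resting only on Parseval's identity together with \textbf{C$_1$}--\textbf{C$_2$}.
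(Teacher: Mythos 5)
Your proposal is correct and follows essentially the same route as the paper: decompose the residual periodogram into the pure-noise part plus cross and squared plug-in terms (killed by Parseval, \textbf{C$_1$}, \textbf{C$_2$}), establish the law of large numbers for $\int I_T^{\varepsilon}\varphi$ through its mean (Fejér-kernel smoothing) and its variance (second- and fourth-order spectral densities, as in the cited works), upgrade to uniformity over $\Theta^c$ via the \textbf{C$_5$} modulus of continuity of $v/f$, and finish with the standard minimum-contrast identification argument based on $K(\theta_0,\theta)\ge 0$ with equality only at $\theta_0$ (\textbf{C$_3$}). The only differences are cosmetic (bias handled by an $L_1$ approximate-identity argument instead of the paper's Fejér-kernel lemma for $G_2$, and the tightness bound $\ExpV\int I_T w/v=O(1)$ in place of the paper's convergence in probability of that integral), and your citation of the variance analysis mirrors what the paper itself does in its Lemma on $J_T^{\varepsilon}(\varphi)$.
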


To prove the theorem we need some additional assertions.

 \begin{lemma}\label{lema_int_eps^2}
  Under condition \textbf{A$_1$}
   \[
     \nu^*_T=T^{-1}\int\limits_0^T\,\varepsilon^2(t)dt\ \overset{\Prob}\longrightarrow\ B(0),\ \ \text{as}\ \ T\to\infty.
   \]
 \end{lemma}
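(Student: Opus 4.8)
The plan is to show $L_2$-convergence, which implies convergence in probability. First I would note that $\ExpV \nu^*_T = T^{-1}\int_0^T \ExpV\varepsilon^2(t)\,dt = B(0)$, since $\varepsilon$ is strictly stationary with $\ExpV\varepsilon^2(t) = m_2(t,t) = B(0)$ by the formula for $B$ given in the excerpt (note $B(0) = d_2\int_{\mathbb{R}}\hat a^2(s)\,ds < \infty$ because $\hat a \in L_2(\mathbb{R})$, and $d_2 = \ExpV L^2(1) < \infty$ because \textbf{A$_1$} forces the exponential-moment bound \eqref{bnd_int_e^luPi}, hence all polynomial moments of $L(1)$, and in particular via \eqref{rep_c_r_by_hat-a} all cumulant/moment functions of $\varepsilon$ exist). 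So the claim reduces to $\Var(\nu^*_T)\to 0$.

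Next I would compute
\[
  \Var(\nu^*_T) = T^{-2}\int_0^T\int_0^T \l(m_4(t_1,t_1,t_2,t_2) - B^2(0)\r)\,dt_1\,dt_2.
\]
Using the decomposition \eqref{4ordr_mmnt_fn} of the fourth moment function, the term $m_2(t_1,t_1)m_2(t_2,t_2) = B^2(0)$ cancels, leaving
\[
  \Var(\nu^*_T) = T^{-2}\int_0^T\int_0^T \l(c_4(t_1,t_1,t_2,t_2) + 2B^2(t_1-t_2)\r)\,dt_1\,dt_2.
\]
For the covariance-squared term, a standard change of variables and the bound $T^{-2}\int_0^T\int_0^T B^2(t_1-t_2)\,dt_1\,dt_2 \le T^{-1}\int_{\mathbb{R}} B^2(u)\,du$ gives $O(T^{-1})$, provided $B \in L_2(\mathbb{R})$; this follows because $B = d_2(\hat a * \tilde{\hat a})$ with $\hat a \in L_1\cap L_2$, so $B \in L_2$ (indeed $B$ is bounded and integrable). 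For the $c_4$ term I would use the cumulant representation \eqref{rep_c_r_by_hat-a}: $c_4(t_1,t_1,t_2,t_2) = d_4\int_{\mathbb{R}} \hat a^2(t_1-s)\hat a^2(t_2-s)\,ds$, so after Fubini
\[
  T^{-2}\int_0^T\int_0^T c_4(t_1,t_1,t_2,t_2)\,dt_1\,dt_2 = d_4\, T^{-2}\int_{\mathbb{R}}\l(\int_0^T \hat a^2(t-s)\,dt\r)^2 ds \le d_4\, T^{-2}\l(\int_{\mathbb{R}}\hat a^2\r)\int_{\mathbb{R}}\int_0^T \hat a^2(t-s)\,dt\,ds,
\]
where I bounded one inner integral by $\|\hat a\|_{L_2}^2$; the remaining double integral equals $T\|\hat a\|_{L_2}^2$, so this term is also $O(T^{-1})$.

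Hence $\Var(\nu^*_T) = O(T^{-1}) \to 0$, which together with $\ExpV\nu^*_T = B(0)$ yields $\nu^*_T \to B(0)$ in $L_2(\Omega)$, and a fortiori in probability. The main technical point — really the only place care is needed — is justifying all the integrability and Fubini steps, i.e. confirming that $\hat a \in L_1\cap L_2$ together with finiteness of $d_2, d_4$ (guaranteed by \textbf{A$_1$}) is exactly enough to make every interchange of integration legitimate and every bound finite; none of the estimates is delicate, the decay $T^{-1}$ being very comfortable.
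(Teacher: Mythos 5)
Your proposal is correct and follows essentially the same route as the paper: both reduce the claim to showing $\Var(\nu^*_T)\to 0$ (the paper phrases this via Chebyshev's inequality), use the fourth-moment decomposition \eqref{4ordr_mmnt_fn} to split the variance into the $c_4$ term and the $2B^2$ term, and bound each by $O(T^{-1})$ using the cumulant representation \eqref{rep_c_r_by_hat-a} together with $\hat a\in L_1(\mathbb{R})\cap L_2(\mathbb{R})$. Your treatment of the $B^2$ term (via $B\in L_2(\mathbb{R})$) just spells out a step the paper leaves implicit.
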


\begin{proof} For any $\rho>0$ by Chebyshev inequality and \eqref{4ordr_mmnt_fn}
  \[
   \begin{aligned}
     \Prob\l\{\l|\nu^*_T-B(0)\r|\ge\rho\r\}\le \rho^{-2}T^{-2}&\int\limits_0^T\int\limits_0^T\,c_4(t,t,s,s)dtds+ \\ &+2\rho^{-2}T^{-2}\int\limits_0^T\int\limits_0^T\,B^2(t-s)dtds=I_1+I_2.
   \end{aligned}
  \]

 From \textbf{A$_1$} it follows that $I_2=O(T^{-1})$. Using expression \eqref{rep_c_r_by_hat-a} for cumulants of the process $\varepsilon$ we get
  \[
   \begin{aligned}
     I_1&= d_4\rho^{-2}T^{-2}\int\limits_0^T\int\limits_0^T\int\limits_{\mathbb{R}}\,\hat{a}^2(t-u)\hat{a}^2(s-u)dudtds=\\
     &=d_4\rho^{-2}T^{-2}\int\limits_0^T\,\l(\int\limits_{\mathbb{R}}\,\hat{a}^2(t-u)\l(\int\limits_0^T\,\hat{a}^2(s-u)ds\r)du\r)dt \le d_4\rho^{-2}\l\|\hat{a}\r\|_2^4T^{-1},
   \end{aligned}
  \]
 where $\l\|\hat{a}\r\|_2=\l(\int\limits_{\mathbb{R}}\,\hat{a}^2(u)du\r)^{\frac12}$, that is $I_1=O(T^{-1})$ as well.
\end{proof}

$\indent$Let
 \[
  \begin{aligned}
   \mathrm{F}_T^{(k)}\l(u_1,\,\ldots,\,u_k\r)=\mathrm{F}_T^{(k)}\l(u_1\,\ldots,\,u_{k-1}\r)=&(2\pi)^{-(k-1)}T^{-1} \int\limits_{[0,T]^k}\, e^{\mathrm{i}\sum\limits_{j=1}^kt_j u_j}dt_1\ldots dt_k=\\
   =&(2\pi)^{-(k-1)}T^{-1}\prod\limits_{i=1}^k\, \dfrac{\sin\frac{Tu_j}2}{\frac{u_j}2},
  \end{aligned}
 \]
with $u_k=-\l(u_1+\ldots+u_{k-1}\r)$, $u_j\in\mathbb{R}$, $j=\overline{1,k}$.

The functions $\mathrm{F}_T^{(k)}\l(u_1,\ldots,u_k\r)$, $k\ge3$, are multidimensional analogues of the Fejér kernel, for $k=2$ we obtain the usual Fejér kernel.

The next statement bases on the results by R.~Bentkus~\cite{Ben_ANoEoSP,Ben_oEoEoSFoSP}, R.~Bentkus and R.~Rutkauskas~\cite{BenRut_oAo12Mo2OSE}.

 \begin{lemma}\label{lema_int_Fejer_krnl}
  Let function $G\l(u_1,\,\ldots,\,u_k\r)$, $u_k=-\l(u_1+\ldots+u_{k-1}\r)$ be bounded and continuous at the point $\l(u_1,\,\ldots,\,u_{k-1}\r)=(0,\,\ldots,\,0)$. Then
   \[
     \lim\limits_{T\rightarrow\infty}\,\int\limits_{\mathbb{R}^{k-1}}\,\mathrm{F}_T^k\l(u_1,\,\ldots,\,u_{k-1}\r) G\l(u_1,\,\ldots,\,u_k\r)du_1\ldots du_{k-1}=G(0,\,\ldots,\,0).
   \]
 \end{lemma}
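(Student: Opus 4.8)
The statement is essentially the classical fact that the multidimensional Fejér kernel $\mathrm{F}_T^{(k)}$ forms an approximate identity on $\mathbb{R}^{k-1}$ as $T\to\infty$, tested against a bounded function $G$ that is continuous at the origin. The plan is to reduce the integral to a small neighbourhood of the origin, where continuity of $G$ at $0$ lets us replace $G$ by $G(0,\ldots,0)$ up to an arbitrarily small error, and to show that the contribution from outside that neighbourhood is negligible because the total mass of $|\mathrm{F}_T^{(k)}|$ concentrates at $0$.

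First I would record the two facts about $\mathrm{F}_T^{(k)}$ on which everything rests. Writing $\mathrm{F}_T^{(k)}(u_1,\ldots,u_{k-1}) = (2\pi)^{-(k-1)}T^{-1}\prod_{j=1}^{k}\frac{\sin(Tu_j/2)}{u_j/2}$ with $u_k=-(u_1+\cdots+u_{k-1})$, one checks (by the substitution $t_j\mapsto t_j/T$ in the defining integral, or directly) that $\int_{\mathbb{R}^{k-1}}\mathrm{F}_T^{(k)}\,du_1\cdots du_{k-1}=1$ for every $T$, and, more importantly, that $\sup_T\int_{\mathbb{R}^{k-1}}|\mathrm{F}_T^{(k)}|\,du_1\cdots du_{k-1} =: M_k<\infty$ — this uniform $L_1$ bound is exactly the content of the cited results of R.~Bentkus and of Bentkus--Rutkauskas, which give the required estimates on one- and two-dimensional spectral-estimate kernels and their products. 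Second, for any fixed $\delta>0$, $\int_{\|(u_1,\ldots,u_{k-1})\|\ge\delta}|\mathrm{F}_T^{(k)}|\,du_1\cdots du_{k-1}\to 0$ as $T\to\infty$; this follows from the scaling $u\mapsto u/T$, which pushes the mass of the kernel into a shrinking neighbourhood of the origin.

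Granting these, the argument is standard. Fix $\rho>0$. By continuity of $G$ at the origin choose $\delta>0$ so that $|G(u_1,\ldots,u_k)-G(0,\ldots,0)|<\rho$ whenever $\|(u_1,\ldots,u_{k-1})\|<\delta$ (note $u_k\to 0$ automatically). Split
\[
\int_{\mathbb{R}^{k-1}}\mathrm{F}_T^{(k)}\,G\,du - G(0,\ldots,0)
=\int_{\mathbb{R}^{k-1}}\mathrm{F}_T^{(k)}\bigl(G-G(0,\ldots,0)\bigr)du
+ G(0,\ldots,0)\Bigl(\int_{\mathbb{R}^{k-1}}\mathrm{F}_T^{(k)}\,du-1\Bigr),
\]
and the last bracket vanishes identically. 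For the remaining integral, split the domain into $\|u\|<\delta$ and $\|u\|\ge\delta$: on the first region the integrand is bounded in absolute value by $\rho\,|\mathrm{F}_T^{(k)}|$, contributing at most $\rho M_k$; on the second region it is bounded by $2\sup|G|\cdot|\mathrm{F}_T^{(k)}|$, and its integral tends to $0$ by the concentration property. Hence $\limsup_{T\to\infty}\bigl|\int\mathrm{F}_T^{(k)}G\,du-G(0,\ldots,0)\bigr|\le \rho M_k$, and since $\rho>0$ is arbitrary the limit is $G(0,\ldots,0)$.

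The only genuine obstacle is the uniform $L_1$ bound $M_k<\infty$ together with the tail-mass estimate: the one-dimensional Fejér kernel $\frac{1}{2\pi T}\bigl(\frac{\sin(Tu/2)}{u/2}\bigr)^2$ is nonnegative with integral $1$, but the product structure with the constraint $u_k=-(u_1+\cdots+u_{k-1})$ makes $\mathrm{F}_T^{(k)}$ change sign for $k\ge 3$, so one cannot simply invoke positivity. This is precisely where the cited estimates of R.~Bentkus~\cite{Ben_ANoEoSP,Ben_oEoEoSFoSP} and Bentkus--Rutkauskas~\cite{BenRut_oAo12Mo2OSE} are needed; I would quote them for the uniform bound and the localisation, and everything else is the routine approximate-identity splitting above.
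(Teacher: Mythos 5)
Your argument is correct and is essentially the treatment the paper intends: the paper gives no separate proof of this lemma, but states it as resting on the results of R.~Bentkus and Bentkus--Rutkauskas, which supply exactly the kernel facts you isolate (unit total integral, a $T$-independent $L_1$ bound for the signed kernel when $k\ge3$, and concentration of its mass at the origin under the scaling $v_j=Tu_j/2$). Granted those facts, your approximate-identity splitting into $\|u\|<\delta$ and $\|u\|\ge\delta$ is the standard and correct way to conclude.
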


We set
 \[
  \begin{aligned}
    g_T(\lambda,\,\alpha)&=\int\limits_0^T\,e^{-\mathrm{i}\lambda t}g(t,\,\alpha)dt,\ &s_T(\lambda,\,\alpha)&=g_T(\lambda,\,\alpha_0)-g_T(\lambda,\,\alpha),\\
    \varepsilon_T(\lambda)&=\int\limits_0^T\,e^{-\mathrm{i}\lambda t}\varepsilon(t)dt,\
    &I_T^{\varepsilon}(\lambda)&=(2\pi T)^{-1}\l|\varepsilon_T(\lambda)\r|^2,
  \end{aligned}
 \]
and write the residual periodogram in the form
 \[
   I_T\l(\lambda,\,\widehat{\alpha}_T\r)=I_T^{\varepsilon}(\lambda)+(\pi T)^{-1}\re\l\{\varepsilon_T(\lambda) \overline{s_T(\lambda,\,\widehat{\alpha}_T)}\r\} +(2\pi T)^{-1}\l|s_T(\lambda,\,\widehat{\alpha}_T)\r|^2.
 \]

Let $\varphi=\varphi (\lambda,\,\theta)$, $(\lambda,\,\theta)\in\mathbb{R}\times\Theta^c$, be an even Lebesgue measurable with respect to variable $\lambda$ for each fixed $\theta $ weight function. We have
  \[
   \begin{aligned}
     J_T(\varphi,\,\widehat{\alpha}_T)=&
      \int\limits_{\mathbb{R}}\,I_T(\lambda,\,\widehat{\alpha}_T)\varphi(\lambda,\,\theta)d\lambda= \int\limits_{\mathbb{R}}\,I_T^{\varepsilon}(\lambda)\varphi(\lambda,\,\theta)d\lambda +\\
     &+(\pi T)^{-1}\int\limits_{\mathbb{R}}\,\re\l\{\varepsilon_T(\lambda) \overline{s_T(\lambda,\,\widehat{\alpha}_T)}\r\} \varphi(\lambda,\,\theta)d\lambda+(2\pi T)^{-1} \int\limits_{\mathbb{R}}\,\l|s_T(\lambda,\,\widehat{\alpha}_T)\r|^2 \varphi(\lambda,\,\theta)d\lambda=\\
     &\ \ \ =J_T^{\varepsilon}(\varphi)+J_T^{(1)}(\varphi)+J_T^{(2)}(\varphi).
   \end{aligned}
  \]
Suppose
 \begin{equation}
    \varphi(\lambda,\,\theta)\ge0,\ \sup\limits_{\lambda\in\mathbb{R},\,\theta\in\Theta^c}\,\varphi(\lambda,\,\theta)= c(\varphi)<\infty   .
  \label{phi_pos&bnded}
 \end{equation}
Then by the Plancherel identity and condition \textbf{C$_2$}
 \[
  \begin{aligned}
     \l|J_T^{(1)}(\varphi)\r|&\le 2c(\varphi)\l((2\pi T)^{-1}\int\limits_{\mathbb{R}}\, |\varepsilon_T(\lambda)|^2d\lambda\r)^{\frac12}
     \l((2\pi T)^{-1}\int\limits_{\mathbb{R}}\,\l| s_T(\lambda,\,\widehat{\alpha}_T)\r|^2d\lambda\r)^{\frac12}=\\
     &=2c(\varphi)\l(\nu_T^*\r)^{\frac12}T^{-\frac12}\l(\Phi_T(\widehat{\alpha}_T,\,\alpha_0)\r)^{\frac12} \le2c_0^{\frac12}c(\varphi)\l(\nu_T^*\r)^{\frac12}\l\|T^{-\frac12}d_T(\alpha_0)\l(\widehat{\alpha}_T-\alpha_0\r)\r\|.
  \end{aligned}
 \]
Taking into account conditions \textbf{A$_1$, C$_1$, C$_2$} and the result of Lemma \ref{lema_int_eps^2} we obtain
 \begin{equation}
    \sup\limits_{\theta\in\Theta^c}\,\l|J_T^{(1)}(\varphi)\r|\ \overset{\Prob}\longrightarrow\ 0,\ \ \text{as}\ \ T\rightarrow\infty.
  \label{J_T^1phi_2_0}
 \end{equation}
On the other hand
 \[
   J_T^{(2)}(\varphi)\leq c(\varphi)T^{-1}\Phi_T(\alpha_0,\,\widehat{\alpha}_T)
   \le c_0c(\varphi)\l\|T^{-\frac12}d_T(\alpha_0)\l(\widehat{\alpha}_T-\alpha_0\r)\r\|^2,
 \]
and again, thanks to \textbf{C$_1$, C$_2$},
 \begin{equation}
    \sup\limits_{\theta\in\Theta^c}\,J_T^{(2)}(\varphi)\ \overset{\Prob}\longrightarrow\ 0,\ \ \text{as}\ \ T\rightarrow\infty.
  \label{J_T^2phi_2_0}
 \end{equation}

 \begin{lemma}\label{lema_cnv_J_T^eps}
  Suppose conditions \textbf{A$_1$, A$_2$} are fulfilled and the weight function $\varphi(\lambda,\,\theta)$ introduced above satisfies \eqref{phi_pos&bnded}. Then, as $T\to\infty$,
   \[
      J_T^{\varepsilon}(\varphi)\ \overset{\Prob}\longrightarrow\ J(\varphi)=\int\limits_{\mathbb{R}}\,f(\lambda,\,\theta_0) \varphi(\lambda,\,\theta)d\lambda,\ \theta\in\Theta^c.
   \]
 \end{lemma}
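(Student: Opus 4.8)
\noindent The plan is to prove the stronger statement that $J_T^{\varepsilon}(\varphi)\to J(\varphi)$ in $L_2(\Omega)$, which immediately gives the asserted convergence in probability. Accordingly I would show separately that $\ExpV J_T^{\varepsilon}(\varphi)\to J(\varphi)$ and $\Var J_T^{\varepsilon}(\varphi)\to0$ as $T\to\infty$. I would first record the facts, all consequences of \textbf{A$_1$}, that $\hat a\in L_1(\mathbb{R})\cap L_2(\mathbb{R})$ forces $B=d_2\,\hat a\star\hat a\in L_1(\mathbb{R})\cap L_2(\mathbb{R})$, $\ExpV\varepsilon(t)=0$, all moments of $\varepsilon$ are finite, and $f(\cdot,\theta_0)=(2\pi)^{-1}d_2|a(\cdot)|^2\in L_1(\mathbb{R})$; these are used throughout.

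\emph{The mean.} Inserting the spectral representation $B(t-s)=\int_{\mathbb{R}}e^{\mathrm{i}\mu(t-s)}f(\mu,\theta_0)\,d\mu$ and applying Fubini's theorem gives $\ExpV I_T^{\varepsilon}(\lambda)=(2\pi T)^{-1}\ExpV|\varepsilon_T(\lambda)|^2=\int_{\mathbb{R}}\mathrm{F}_T^{(2)}(\lambda-\mu)f(\mu,\theta_0)\,d\mu$, the convolution of $f(\cdot,\theta_0)$ with the Fejér kernel. After the substitution $\lambda=\mu+v$ and Tonelli's theorem this yields
\[
\ExpV J_T^{\varepsilon}(\varphi)=\int_{\mathbb{R}}\mathrm{F}_T^{(2)}(v)\,H(v)\,dv,\qquad H(v)=\int_{\mathbb{R}}\varphi(\mu+v,\theta)\,f(\mu,\theta_0)\,d\mu .
\]
Since $\varphi$ is bounded by $c(\varphi)$, $|H(v)|\le c(\varphi)\|f(\cdot,\theta_0)\|_{L_1}$, and changing variables inside the integral defining $H$ gives $|H(v)-H(0)|\le c(\varphi)\,\|f(\cdot-v,\theta_0)-f(\cdot,\theta_0)\|_{L_1}\to0$ as $v\to0$ by continuity of translation in $L_1(\mathbb{R})$. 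Thus $H$ is bounded and continuous at $0$ with $H(0)=J(\varphi)$, and Lemma \ref{lema_int_Fejer_krnl} (with $k=2$) yields $\ExpV J_T^{\varepsilon}(\varphi)\to J(\varphi)$.

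\emph{The variance.} Here $\Var J_T^{\varepsilon}(\varphi)=\int_{\mathbb{R}^2}\varphi(\lambda_1,\theta)\varphi(\lambda_2,\theta)\cov\!\left(I_T^{\varepsilon}(\lambda_1),I_T^{\varepsilon}(\lambda_2)\right)d\lambda_1\,d\lambda_2$; using $I_T^{\varepsilon}(\lambda)=(2\pi T)^{-1}\varepsilon_T(\lambda)\varepsilon_T(-\lambda)$ and the fourth-moment identity \eqref{4ordr_mmnt_fn} applied to $\varepsilon_T(\lambda_1),\varepsilon_T(-\lambda_1),\varepsilon_T(\lambda_2),\varepsilon_T(-\lambda_2)$,
\[
\cov\!\left(I_T^{\varepsilon}(\lambda_1),I_T^{\varepsilon}(\lambda_2)\right)=(2\pi T)^{-2}\Bigl(|P_T(\lambda_1,\lambda_2)|^2+|P_T(\lambda_1,-\lambda_2)|^2+C_T(\lambda_1,\lambda_2)\Bigr),
\]
where $P_T(\lambda_1,\lambda_2)=\ExpV[\varepsilon_T(\lambda_1)\varepsilon_T(\lambda_2)]$ and $C_T(\lambda_1,\lambda_2)=\mathrm{cum}(\varepsilon_T(\lambda_1),\varepsilon_T(-\lambda_1),\varepsilon_T(\lambda_2),\varepsilon_T(-\lambda_2))$. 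The contribution of the first two terms is $O(T^{-1})$: $P_T$ is the two-dimensional Fourier transform of $B(t-s)\mathbb{I}_{[0,T]}(t)\mathbb{I}_{[0,T]}(s)$, so by Parseval's identity, evenness of $\varphi$ and $\int_0^T\!\!\int_0^T B^2(t-s)\,dt\,ds\le T\|B\|_{L_2}^2$, that part of $\Var J_T^{\varepsilon}(\varphi)$ is $\le 2c(\varphi)^2(2\pi)^2(2\pi T)^{-2}T\|B\|_{L_2}^2=O(T^{-1})$ (this uses $B\in L_2(\mathbb{R})$). For $C_T$ one substitutes the spectral representation of $c_4$ through $f_4$; the $\lambda_1$- and $\lambda_2$-integrals then factor and
$\int_{\mathbb{R}^2}\varphi(\lambda_1,\theta)\varphi(\lambda_2,\theta)|C_T(\lambda_1,\lambda_2)|\,d\lambda_1\,d\lambda_2\le c(\varphi)^2\int_{\mathbb{R}^3}|f_4(\mu_1,\mu_2,\mu_3)|\,g_T(\mu_1+\mu_2)^2\,d\mu$, where $g_T(c)=\int_{\mathbb{R}}|\Delta_T(v)||\Delta_T(v+c)|\,dv$ and $\Delta_T(u)=\int_0^T e^{-\mathrm{i}ut}\,dt$. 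Since $\|\Delta_T\|_{L_2}^2=2\pi T$, Cauchy--Schwarz gives $(2\pi T)^{-2}g_T(c)^2\le(2\pi T)^{-1}g_T(c)\le1$, while from $|\Delta_T(u)|\le\min(T,2/|u|)$ one checks that $(2\pi T)^{-1}g_T(c)\to0$ for every $c\ne0$ (the Dirichlet kernels concentrate at the distinct points $0$ and $-c$); as $f_4\in L_1(\mathbb{R}^3)$ by \textbf{A$_2$(i)} and $\{\mu:\mu_1+\mu_2=0\}$ is Lebesgue-null, dominated convergence makes the $C_T$-contribution $o(1)$. Hence $\Var J_T^{\varepsilon}(\varphi)\to0$.

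Combining the two limits, $\ExpV\bigl(J_T^{\varepsilon}(\varphi)-J(\varphi)\bigr)^2\to0$, so $J_T^{\varepsilon}(\varphi)\overset{\Prob}\longrightarrow J(\varphi)$. I expect the only genuine obstacle to be the cumulant term $C_T$: the crude bound $|C_T(\lambda_1,\lambda_2)|\le\int_{\mathbb{R}^3}|f_4|\prod_j|\Delta_T|\,d\mu$ only gives boundedness, and one has to exploit the concentration of the Fejér/Dirichlet kernels together with the integrability of the fourth-order spectral density — this is precisely where, as the Remark after \textbf{A$_2$} notes, only the existence of $f_4$ is needed, and where the scheme departs from the Gaussian case of \cite{IvPr_oWEoPoSDoRNiNRM}, relying instead on the cumulant techniques of \cite{AvLeoSa_oSTLTHUBLIetc,AnhLeoSa_oCoMCE4FSP}.
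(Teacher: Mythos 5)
Your proposal is correct, and its first half coincides with the paper's argument: both reduce $\ExpV J_T^{\varepsilon}(\varphi)$ to a Fejér-kernel average of $G_2(u)=\int_{\mathbb{R}}f(\lambda+u,\theta_0)\varphi(\lambda,\theta)\,d\lambda$ and invoke Lemma \ref{lema_int_Fejer_krnl} with $k=2$; your $L_1$-continuity-of-translation argument is precisely what substantiates the paper's brief claim that $G_2$ is continuous at the origin. For the variance you take a genuinely different route. The paper represents $T\Var J_T^{\varepsilon}(\varphi)$ as a three-dimensional Fejér average of $G_4=2G_4^{(1)}+G_4^{(2)}$ and applies Lemma \ref{lema_int_Fejer_krnl} with $k=4$, which requires boundedness and continuity at the origin of $G_4^{(1)}$ and $G_4^{(2)}$ (the latter exploited through the factorized form \eqref{spec_den_H-ordr} of $f_4$) and delivers the sharper statement $\Var J_T^{\varepsilon}(\varphi)=O(T^{-1})$. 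You instead expand $\cov\bigl(I_T^{\varepsilon}(\lambda_1),I_T^{\varepsilon}(\lambda_2)\bigr)$ via \eqref{4ordr_mmnt_fn}, dispose of the two product terms by Parseval together with $B\in L_2(\mathbb{R})$ (also $O(T^{-1})$), and treat the fourth-cumulant term by Dirichlet-kernel concentration plus dominated convergence, using only $f_4\in L_1(\mathbb{R}^3)$; this yields $o(1)$ for that term rather than a rate, which is all the lemma needs. Your version is more self-contained (no continuity-at-origin check for the $G_4$'s, in line with the Remark that only the existence of $f_4$ is used here), while the paper's buys a uniform $O(T^{-1})$ variance bound and recycles the same kernel lemma for both moments. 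If you write yours up, make explicit the interchange giving $\Var J_T^{\varepsilon}(\varphi)=\int_{\mathbb{R}^2}\varphi(\lambda_1,\theta)\varphi(\lambda_2,\theta)\cov\bigl(I_T^{\varepsilon}(\lambda_1),I_T^{\varepsilon}(\lambda_2)\bigr)d\lambda_1d\lambda_2$ (Tonelli, using $\varphi\ge0$ from \eqref{phi_pos&bnded} and finiteness of fourth moments) and the use of evenness of $\varphi$ for the $|P_T(\lambda_1,-\lambda_2)|^2$ term.
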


\begin{proof}
 The lemma in fact is an application of Lemma 2 in \cite{AnhHeLeo_DMoLMPDbLP} and Theorem 1 in \cite{AnhLeoSa_oCoMCE4FSP} reasoning to linear process \eqref{LinRep_RmdnNse}. It is sufficient to prove
  \[
     1)\ \ExpV J_T^\varepsilon(\varphi)\ \longrightarrow\ J(\varphi);\ \ \ 2)\ J_T^\varepsilon(\varphi)-\ExpV J_T^\varepsilon(\varphi)\ \overset{\Prob}\longrightarrow\ 0.
  \]

 Omitting parameters $\theta_0$, $\theta$ in some formulas below we derive
  \[
   \begin{aligned}
     \ExpV J_T^{\varepsilon}(\varphi)&=\int\limits_{\mathbb{R}}\,G_2(u)\mathrm{F}_T^{(2)}(u)du,\ \ G_2(u)=\int\limits_{\mathbb{R}}\,f(\lambda+u)\varphi(\lambda)d\lambda;\\
     T\Var J_T^\varepsilon(\varphi)&=2\pi\int\limits_{\mathbb{R}^3}\,G_4(u_1,\,u_2,\,u_3) \mathrm{F}_T^{(4)}(u_1,\,u_2,\,u_3) du_1du_2du_3,\\
     G_4(u_1&,\,u_2,\,u_3)=2\int\limits_{\mathbb{R}}\,f(\lambda+u_1)f(\lambda-u_3)\varphi(\lambda)\varphi(\lambda+u_1+u_2)d\lambda+\\
     &\hspace*{21mm}+\int\limits_{\mathbb{R}^2}\,f_4(\lambda+u_1,\,-\lambda+u_2,\,\mu+u_3)\varphi(\lambda) \varphi(\mu)d\lambda d\mu=\\
     &\hspace*{15mm}=2G_4^{(1)}(u_1,\,u_2,\,u_3)+G_4^{(2)}(u_1,\,u_2,\,u_3).
   \end{aligned}
  \]
 To apply Lemma \ref{lema_int_Fejer_krnl} we have to show that the functions $G_2(u)$, $u\in\mathbb{R}$; $G_4^{(1)}(\mathrm{u})$, $G_4^{(2)}(\mathrm{u})$, $\mathrm{u}=(u_1,\,u_2,\,u_3)\in\mathbb{R}^3$, are bounded and continuous at origins.

 Boundedness of $G_2$ follows from \eqref{phi_pos&bnded}. Thanks to \eqref{phi_pos&bnded}
  \[
    \underset{\mathrm{u}\in\mathbb{R}^3}\sup\,\l|G_4^{(1)}(\mathrm{u})\r|\le c^2(\varphi)\|f\|_2^2<\infty,\ \ \|f\|_2=\l(\int\limits_{\mathbb{R}}\,f^2(\lambda,\,\theta_0)d\lambda\r)^{\frac12}.
  \]
 On the other hand, by \eqref{spec_den_H-ordr}
  \[
   \begin{aligned}
     |G_4^{(2)}(u_1,\,u_2,\,u_3)|\le d_4&(2\pi)^{-3}\int\limits_{\mathbb{R}}\,\l|a(\lambda+u_1)a(-\lambda+u_2)\r|\varphi(\lambda)d\lambda \cdot\\
     &\cdot\int\limits_{\mathbb{R}}\,\l|a(\mu+u_3)a(-\mu-u_1-u_2-u_3)\r|\varphi(\mu)d\mu=d_4\cdot(2\pi)^{-3}\cdot I_3\cdot I_4,
   \end{aligned}
  \]
  \[
    I_3\le 2\pi c(\varphi)d_2^{-1}\int\limits_{\mathbb{R}}\,f(\lambda,\,\theta_0)d\lambda=2\pi c(\varphi)d_2^{-1}B(0).
  \]
 Integral $I_4$ admits the same upper bound. So,
  \[
    \underset{\mathrm{u}\in\mathbb{R}^3}\sup\,\l|G_4^{(2)}(\mathrm{u})\r|\le (2\pi)^{-1}\gamma_2c^2(\varphi)B^2(0),
  \]
 where $\gamma_2=\dfrac{d_4}{d_2^2}>0$ is the excess of $L(1)$ distribution, and functions $G_2$, $G_4^{(1)}$, $G_4^{(2)}$ are bounded. The continuity at origins of these functions follows from conditions of Lemma~\ref{lema_cnv_J_T^eps} as well.
\end{proof}

\begin{corollary}\label{cor_cnv_cntrst_fld}
 If $\varphi(\lambda,\,\theta)=\dfrac{w(\lambda)}{f(\lambda,\,\theta)}$, then under conditions \textbf{A$_1$, A$_2$, C$_1$, C$_2$} and \textbf{C$_4$}
  \[
    U_T(\theta,\,\widehat{\alpha}_T)\ \overset{\Prob}\longrightarrow\ U(\theta) =\int\limits_{\mathbb{R}}\,\l(\log f(\lambda,\,\theta)+ \dfrac{f(\lambda,\,\theta_0)}{f(\lambda,\,\theta)}\r)w(\lambda)d\lambda,\ \theta\in\Theta^c.
  \]
\end{corollary}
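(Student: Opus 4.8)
The plan is to separate the Whittle contrast field into a nonrandom part and a weighted integral of the residual periodogram, and then invoke the estimates and Lemma~\ref{lema_cnv_J_T^eps} already prepared above. With the choice $\varphi(\lambda,\,\theta)=w(\lambda)/f(\lambda,\,\theta)$ one rewrites \eqref{Wh_ctrst_fn} as
\[
 U_T(\theta,\,\widehat{\alpha}_T)=\int\limits_{\mathbb{R}}\,\log f(\lambda,\,\theta)\,w(\lambda)\,d\lambda+J_T(\varphi,\,\widehat{\alpha}_T),
\]
where the first summand is deterministic and, by \textbf{C$_4$(i)}, absolutely convergent (it is bounded in modulus by $\int_{\mathbb{R}}Z_1(\lambda)\,d\lambda$). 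Thus it suffices to track $J_T(\varphi,\,\widehat{\alpha}_T)$ for each fixed $\theta\in\Theta^c$.

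First I would verify that this particular $\varphi$ fulfils the standing hypothesis \eqref{phi_pos&bnded}: nonnegativity holds since $w\ge0$ and $f(\lambda,\,\theta)>0$ on $\mathbb{R}\times\Theta^c$ by \textbf{A$_2$(iii)}, while boundedness, with $c(\varphi)=c_1$, is precisely \textbf{C$_4$(ii)}. Consequently the decomposition $J_T(\varphi,\,\widehat{\alpha}_T)=J_T^{\varepsilon}(\varphi)+J_T^{(1)}(\varphi)+J_T^{(2)}(\varphi)$ is available together with \eqref{J_T^1phi_2_0} and \eqref{J_T^2phi_2_0}, so $\sup_{\theta\in\Theta^c}\l|J_T^{(1)}(\varphi)\r|\overset{\Prob}\longrightarrow0$ and $\sup_{\theta\in\Theta^c}J_T^{(2)}(\varphi)\overset{\Prob}\longrightarrow0$ under \textbf{A$_1$, C$_1$, C$_2$} and Lemma~\ref{lema_int_eps^2}.

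The remaining term $J_T^{\varepsilon}(\varphi)$ is exactly covered by Lemma~\ref{lema_cnv_J_T^eps}, whose assumptions \textbf{A$_1$, A$_2$} and \eqref{phi_pos&bnded} are all in force; hence $J_T^{\varepsilon}(\varphi)\overset{\Prob}\longrightarrow J(\varphi)=\int_{\mathbb{R}}f(\lambda,\,\theta_0)\,w(\lambda)/f(\lambda,\,\theta)\,d\lambda$. Summing the three limits and the nonrandom integral gives $U_T(\theta,\,\widehat{\alpha}_T)\overset{\Prob}\longrightarrow U(\theta)$, with $U(\theta)$ finite because $\int_{\mathbb{R}}\l|\log f(\lambda,\,\theta)\r|w(\lambda)\,d\lambda\le\int_{\mathbb{R}}Z_1(\lambda)\,d\lambda$ and $\int_{\mathbb{R}}f(\lambda,\,\theta_0)w(\lambda)/f(\lambda,\,\theta)\,d\lambda\le c_1B(0)$. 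There is no genuinely hard step: the corollary is a bookkeeping consequence of the preceding lemma and the two elementary bounds \eqref{J_T^1phi_2_0} and \eqref{J_T^2phi_2_0}; the only points needing care are checking that $w/f$ inherits \eqref{phi_pos&bnded} from \textbf{C$_4$} and that the $\log f$-integral converges absolutely.
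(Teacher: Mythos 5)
Your proposal is correct and follows essentially the same route the paper intends: with $\varphi=w/f$ one checks \eqref{phi_pos&bnded} via \textbf{A$_2$(iii)} and \textbf{C$_4$(ii)}, and the convergence is then just the sum of Lemma~\ref{lema_cnv_J_T^eps} applied to $J_T^{\varepsilon}(\varphi)$ with the bounds \eqref{J_T^1phi_2_0} and \eqref{J_T^2phi_2_0}, plus the deterministic $\log f$-integral controlled by \textbf{C$_4$(i)}. No gaps.
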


Consider the Whittle contrast function
 \[
   K(\theta_0,\,\theta)=U(\theta)-U(\theta_0)=\int\limits_{\mathbb{R}}\, \l(\dfrac{f(\lambda,\,\theta_0)}{f(\lambda,\,\theta)}-1- \log\dfrac{f(\lambda,\,\theta_0)}{f(\lambda,\,\theta)}\r) w(\lambda)d\lambda\ge 0,
 \]
with $K(\theta_0,\,\theta)=0$ if and only if $\theta=\theta_0$ due to \textbf{C$_3$}.

\begin{lemma}\label{lema_cnv_sup_dif_cntrst_fld}
 If the coditions \textbf{A$_1$, A$_2$, C$_1$, C$_2$, C$_4$} and \textbf{C$_5$} are satisfied, then
  \[
    \sup\limits_{\theta\in\Theta^c}\,\l|U_T(\theta,\,\widehat{\alpha}_T)- U(\theta)\r|\ \overset{\Prob}\longrightarrow\ 0,\ \ \text{as}\ \ T\to\infty.
  \]
\end{lemma}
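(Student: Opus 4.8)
The plan is to deduce the uniform statement from the pointwise convergence already in hand (Lemma~\ref{lema_cnv_J_T^eps} and Corollary~\ref{cor_cnv_cntrst_fld}) by a standard stochastic-equicontinuity-plus-compactness argument, with condition \textbf{C$_5$} supplying the control of the oscillation in $\theta$.

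First I would fix the decomposition. Put $\varphi_\theta(\lambda)=w(\lambda)/f(\lambda,\theta)$. The logarithmic terms of $U_T(\theta,\widehat\alpha_T)$ and $U(\theta)$ cancel, so
\[
  U_T(\theta,\widehat\alpha_T)-U(\theta)=\bigl(J_T^\varepsilon(\varphi_\theta)-J(\varphi_\theta)\bigr)+J_T^{(1)}(\varphi_\theta)+J_T^{(2)}(\varphi_\theta).
\]
By \textbf{C$_4$(ii)} the family $\{\varphi_\theta:\theta\in\Theta^c\}$ satisfies \eqref{phi_pos&bnded} with the single constant $c(\varphi)=c_1$, so \eqref{J_T^1phi_2_0} and \eqref{J_T^2phi_2_0} already give $\sup_{\theta\in\Theta^c}\bigl|J_T^{(1)}(\varphi_\theta)\bigr|\overset{\Prob}\longrightarrow0$ and $\sup_{\theta\in\Theta^c}J_T^{(2)}(\varphi_\theta)\overset{\Prob}\longrightarrow0$. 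Hence it suffices to prove $\sup_{\theta\in\Theta^c}\bigl|J_T^\varepsilon(\varphi_\theta)-J(\varphi_\theta)\bigr|\overset{\Prob}\longrightarrow0$.

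Next I would establish equicontinuity of $\theta\mapsto J_T^\varepsilon(\varphi_\theta)$ that is uniform in $T$. Fix $\delta>0$. By \textbf{C$_5$(i)} there is $\eta>0$ such that $\|\theta_1-\theta_2\|<\eta$ forces $|v(\lambda)/f(\lambda,\theta_1)-v(\lambda)/f(\lambda,\theta_2)|\le\delta$ for all $\lambda$; writing $\varphi_{\theta_1}-\varphi_{\theta_2}=(w/v)\bigl(v/f(\cdot,\theta_1)-v/f(\cdot,\theta_2)\bigr)$ and using $c_2:=\sup_\lambda w(\lambda)/v(\lambda)<\infty$ from \textbf{C$_5$(ii)}, one gets
\[
  \bigl|J_T^\varepsilon(\varphi_{\theta_1})-J_T^\varepsilon(\varphi_{\theta_2})\bigr|\le\delta c_2\int_{\mathbb{R}}I_T^\varepsilon(\lambda)\frac{w(\lambda)}{v(\lambda)}\,d\lambda=\delta c_2\,J_T^\varepsilon(w/v),
\]
and, with $f(\lambda,\theta_0)$ in place of $I_T^\varepsilon(\lambda)$, $\bigl|J(\varphi_{\theta_1})-J(\varphi_{\theta_2})\bigr|\le\delta c_2\,J(w/v)$; in particular $U(\cdot)$ is continuous on $\Theta^c$. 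Since $w/v$ is nonnegative, bounded by $c_2$, and even, Lemma~\ref{lema_cnv_J_T^eps} applies to it and gives $J_T^\varepsilon(w/v)\overset{\Prob}\longrightarrow J(w/v)=\int_{\mathbb{R}}f(\lambda,\theta_0)w(\lambda)/v(\lambda)\,d\lambda<\infty$, so the bounding quantity $J_T^\varepsilon(w/v)$ is $O_{\Prob}(1)$.

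Finally I would run the covering argument. The set $\Theta^c$ is compact (as $\Theta$ is bounded), so it is covered by finitely many balls of radius $\eta$ centred at $\theta^{(1)},\dots,\theta^{(N)}$; choosing for each $\theta$ a centre $\theta^{(j)}$ within $\eta$ and inserting the telescoping terms yields
\[
  \sup_{\theta\in\Theta^c}\bigl|J_T^\varepsilon(\varphi_\theta)-J(\varphi_\theta)\bigr|\le\delta c_2\bigl(J_T^\varepsilon(w/v)+J(w/v)\bigr)+\max_{1\le j\le N}\bigl|J_T^\varepsilon(\varphi_{\theta^{(j)}})-J(\varphi_{\theta^{(j)}})\bigr|.
\]
The maximum is over finitely many terms, each of which tends to $0$ in probability by Lemma~\ref{lema_cnv_J_T^eps} applied to $\varphi_{\theta^{(j)}}$, while the first summand is $\le\delta c_2\bigl(2J(w/v)+o_{\Prob}(1)\bigr)$; letting first $T\to\infty$ and then $\delta\downarrow0$ finishes the proof. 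The only genuinely delicate point is the uniform-in-$T$ equicontinuity estimate, and condition \textbf{C$_5$} is designed precisely for it: the auxiliary weight $v$ lets one majorise the $\theta$-oscillation of $w/f(\cdot,\theta)$ by the single admissible weight $w/v$, to which the already-proved Lemma~\ref{lema_cnv_J_T^eps} can be applied; the rest is routine.
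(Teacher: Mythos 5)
Your proposal is correct and follows essentially the same route as the paper: a finite $\delta$-net of the compact set $\Theta^c$, pointwise convergence at the net points (your Lemma~\ref{lema_cnv_J_T^eps} applied to $\varphi_{\theta^{(j)}}$ is exactly what underlies Corollary~\ref{cor_cnv_cntrst_fld}), uniform control of the $\theta$-oscillation via \textbf{C$_5$} through the majorant $w/v$ with $J_T^\varepsilon(w/v)$ convergent by Lemma~\ref{lema_cnv_J_T^eps}, and the residual terms $J_T^{(1)}$, $J_T^{(2)}$ killed uniformly by \eqref{J_T^1phi_2_0}--\eqref{J_T^2phi_2_0} under \textbf{C$_4$(ii)}. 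The only cosmetic difference is that you strip off $J_T^{(1)}$, $J_T^{(2)}$ before the covering argument, whereas the paper carries them inside the probability bound \eqref{ineq_P_1}; the substance is identical.
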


\begin{proof}
 Let $\{\theta_j,\ j=\overline{1,N_{\delta}}\}$ be a $\delta$-net of the set $\Theta^c$. Then
  \[
   \begin{aligned}
     &\sup\limits_{\theta\in\Theta^c}\l|U_T(\theta,\,\widehat{\alpha}_T)-U(\theta)\r|\le \\
     &\ \ \ \le\sup\limits_{\|\theta_1-\theta_2\|\leq\delta} \l|U_T(\theta_1,\,\widehat{\alpha}_T)-U(\theta_1) -(U_T(\theta_2,\,\widehat{\alpha}_T)-U(\theta_2))\r|
     +\max\limits_{1\le j\le N_{\delta}} \l|U_T(\theta_j,\,\widehat{\alpha}_T)-U(\theta_j)\r|,
   \end{aligned}
  \]
 and for any $\rho\ge0$
  \[
   \begin{aligned}
     \Prob\l\{\sup\limits_{\theta\in\Theta^c}\,\l| U_T(\theta,\,\widehat{\alpha}_T)-U(\theta)\r|\ge\rho\r\}\leq P_1+P_2,
   \end{aligned}
  \]
 with
  \[
    P_2=\Prob\l\{\max\limits_{1\le j\le N_{\delta}}\,\l| U_T(\theta_j,\,\widehat{\alpha}_T)-U(\theta_j)\r| \ge\dfrac{\rho}2\r\}\ \to\ 0,\ \ \text{as}\ \ T\to\infty.
  \]
 by Corollary \ref{cor_cnv_cntrst_fld}. On the other hand,
  \begin{equation}
    \begin{aligned}
      P_1=\Prob&\l\{\sup\limits_{\|\theta_1-\theta_2\|\le\delta}\, \Bigl|U_T(\theta_1,\,\widehat{\alpha}_T)-U(\theta_1)- \l(U_T(\theta_2,\,\widehat{\alpha}_T)-U(\theta_2)\r) \Bigr|\ge\frac\rho2\r\}\le\\
      &\le \Prob\l\{\sup\limits_{\|\theta_1-\theta_2\|\le\delta}\, \l|\int\limits_{\mathbb{R}}\,I_T^{\varepsilon}(\lambda) \l(\dfrac{w(\lambda)}{f(\lambda,\,\theta_1)}- \dfrac{w(\lambda)}{f(\lambda,\,\theta_2)}\r)d\lambda\r|\r.+\\
      &\hspace*{9mm}+\sup\limits_{\|\theta_1-\theta_2\|\le\delta}\, \l|\int\limits_{\mathbb{R}}\,f(\lambda,\,\theta_0) \l(\dfrac{w(\lambda)}{f(\lambda,\,\theta_1)} -\dfrac{w(\lambda)}{f(\lambda,\,\theta_2)}\r)d\lambda\r|+\\
      &\hspace*{18mm}+2\l.\sup\limits_{\theta\in\Theta^c} \l|J_T^{(1)}\l(\dfrac{w}{f}\r)\r| +2\sup\limits_{\theta\in\Theta^c}\,J_T^{(2)}\l(\dfrac{w}{f}\r) \ge\dfrac{\rho}2\r\}.
    \end{aligned}
   \label{ineq_P_1}
  \end{equation}
 By the condition \textbf{C$_5$(i)}
  \[
    \sup\limits_{\|\theta_1-\theta_2\|\le\delta}\, \l|\int\limits_{\mathbb{R}}\, I_T^{\varepsilon}(\lambda) \l(\dfrac{w(\lambda)}{f(\lambda,\,\theta_1)} -\dfrac{w(\lambda)}{f(\lambda,\,\theta_2)}\r)d\lambda\r| \le\eta(\delta)\int\limits_{\mathbb{R}}\, I_T^{\varepsilon}(\lambda)\dfrac{w(\lambda)}{v(\lambda)}d\lambda,
  \]
 where
  \[
    \eta(\delta)=\sup\limits_{\lambda\in\mathbb{R},\, \|\theta_1-\theta_2\|\le\delta}\, \l|\dfrac{v(\lambda)}{f(\lambda,\,\theta_1)} -\dfrac{v(\lambda)}{f(\lambda,\,\theta_2)}\r|\ \to\ 0,\ \delta\to0.
  \]
 Since by Lemma \ref{lema_cnv_J_T^eps} and the condition \textbf{C$_5$(ii)}
  \[
    \int\limits_{\mathbb{R}}\,I_T^{\varepsilon}(\lambda)\dfrac{w(\lambda)}{v(\lambda)} d\lambda\overset{\Prob} \longrightarrow \int\limits_{\mathbb{R}}\,f(\lambda, \theta_0)\dfrac{w(\lambda)}{v(\lambda)} d\lambda,\ \ \text{as}\ \ T\rightarrow \infty,
  \]
 and the 2nd term under the probability sign in \eqref{ineq_P_1} by chosing $\delta$ can be made arbitrary small, then $P_1\to0$, as $T\to0$, taking into account that the 3rd and the 4th terms converge to zero in probability, thanks to \eqref{J_T^1phi_2_0} and \eqref{J_T^2phi_2_0}, if $\varphi=\dfrac wf$.
\end{proof}

\begin{prfthm1}
 By Definition \ref{dfn_MCE} for any $\rho>0$
  \[
   \begin{aligned}
     \Prob&\l\{\l\|\widehat{\theta}_T-\theta_0\r\|\ge\rho\r\} =\Prob\l\{\l\|\widehat{\theta}_T-\theta_0\r\|\ge\rho;\ U_T(\widehat{\theta}_T,\,\widehat{\alpha}_T)\le U_T(\theta_0,\,\widehat{\alpha}_T)\r\}\le\\
     &\le \Prob\l\{\inf\limits_{\|\theta-\theta_0\|\ge\rho}\, \l(U_T(\theta,\,\widehat{\alpha}_T) -U_T(\theta_0,\,\widehat{\alpha}_T)\r)\le 0\r\}=\\
     &= \Prob\l\{\inf\limits_{\|\theta-\theta_0\|\ge\rho}\, \Bigl[U_T(\theta,\,\widehat{\alpha}_T)-U(\theta) -(U_T(\theta_0,\,\widehat{\alpha}_T)-U(\theta_0)) +K(\theta_0,\theta)\Bigr]\leq 0\r\}\le\\
     &\le \Prob\l\{\inf\limits_{\|\theta-\theta_0\|\ge\rho}\, \Bigl[U_T(\theta,\,\widehat{\alpha}_T)-U(\theta) -(U_T(\theta_0,\,\widehat{\alpha}_T)-U(\theta_0))\Bigr] +\inf\limits_{\|\theta-\theta_0\|\ge\rho} K(\theta_0,\theta)\leq 0\r\}\le\\
     &\le \Prob\l\{\sup\limits_{\theta\in\Theta^c}\, \l|U_T(\theta,\,\widehat{\alpha}_T)-U(\theta)\r| +\l|U_T(\theta_0,\,\widehat{\alpha}_T)-U(\theta_0)\r| \ge\inf\limits_{\|\theta-\theta_0\|\ge\rho}\, K(\theta_0,\,\theta)\r\}\ \to\ 0,
   \end{aligned}
  \]
when $T\to\infty$ due to Lemma~\ref{lema_cnv_sup_dif_cntrst_fld} and the property of the contrast function $K$.
\end{prfthm1}

\section{Asymptotic normality of minimum contrast estimator}

$\indent$The first three conditions relate to properties of the regression function $g(t,\,\alpha)$ and the LSE $\widehat{\alpha}_T$. They are commented in Appendix \ref{app_LSE_AsymNorm}.

\textbf{N$_1$.} The normed LSE $d_T(\alpha_0)\l(\widehat{\alpha}_T-\alpha_0\r)$ is asymptotically,\ \  as $T\to\infty$, normal $N(0,\,\Sigma_{_{LSE}})$, $\Sigma_{_{LSE}}=\l(\Sigma_{_{LSE}}^{ij}\r)_{i,j=1}^q$.

Let us
 \[
   g'(t,\,\alpha)=\dfrac{\partial}{\partial t}g(t,\,\alpha);\ \ \ \Phi'_T(\alpha_1,\,\alpha_2) =\int\limits_0^T\,\l(g'(t,\,\alpha_1)-g'(t,\,\alpha_2)\r)^2dt,\ \alpha_1,\,\alpha_2\in\mathcal{A}^c.
 \]

\textbf{N$_2$.} The function $g(t,\,\alpha)$ is continuously differentiable with respect to $t\ge0$ for any $\alpha\in\mathcal{A}^c$ and for any $\alpha_0\in\mathcal{A}$, and $T>T_0$ there exists a constant $c_0'$ ($T_0$ and $c'_0$ may depend on $\alpha_0$) such that
 \[
   \Phi_T'(\alpha,\,\alpha_0) \le c_0'\Bigl\|d_T(\alpha_0)\l(\alpha-\alpha_0\r)\Bigr\|^2,\ \alpha\in\mathcal{A}^c.
 \]

Let
 \[
   g_{il}(t,\,\alpha)=\dfrac{\partial^2}{\partial\alpha_i \partial\alpha_l}g(t,\,\alpha),\ \ d_{il,T}^2(\alpha)=\int\limits_0^T\,g_{il}^2(t,\,\alpha)dt,\ \ i,l=\overline{1,q},\ \ v(r)=\l\{x\in\mathbb{R}^q\,:\,\|x\|<r\r\},\ r>0.
 \]

\textbf{N$_3$.} The function $g(t,\,\alpha)$ is twice continuously differentiable  with respect to $\alpha\in\mathcal{A}^c$ for any $t\ge0$, and for any $R\ge0$ and all sufficiently large $T$ ($T>T_0(R)$)\\
\hspace*{15mm}\textbf{(i)} $d_{iT}^{-1}(\alpha_0)\sup\limits_{t\in[0,T],\,u\in v^c(R)}\, \l|g_i\l(t,\,\alpha_0+d_T^{-1}(\alpha_0)u\r)\r|\le c^i(R)T^{-\frac12}$, $i=\overline{1,q}$;\\
\hspace*{14mm}\textbf{(ii)} $d_{il,T}^{-1}(\alpha_0)\sup\limits_{t\in[0,T],\,u\in v^c(R)}\, \l|g_{il}\l(t,\,\alpha_0+d_T^{-1}(\alpha_0)u\r)\r|\le c^{il}(R)T^{-\frac12}$, $i,l=\overline{1,q}$;\\
\hspace*{12mm}\textbf{(iii)} $d_{iT}^{-1}(\alpha_0)d_{lT}^{-1}(\alpha_0) d_{il,T}(\alpha_0)\le \tilde{c}^{il}T^{-\frac12}$, $i,l=\overline{1,q}$,\\
with positive constants $c^i$, $c^{il}$, $\tilde{c}^{il}$, possibly, depending on $\alpha_0$.

We assume also that the function $f(\lambda,\,\theta)$ is twice differentiable with respect to $\theta\in\Theta^c$ for any $\lambda\in\mathbb{R}$.

Set
 \[
   f_i(\lambda,\,\theta)=\dfrac{\partial}{\partial\theta_i} f(\lambda,\,\theta),\ \ \
   f_{ij}(\lambda,\,\theta)=\dfrac{\partial^2}{\partial\theta_i \partial\theta_j}f(\lambda,\,\theta),
 \]
and introduce the following conditions.

\textbf{N$_4$. (i)} For any $\theta\in\Theta^c$ the functions $\varphi_i(\lambda)=\dfrac{f_i(\lambda,\,\theta)}{f^2(\lambda,\,\theta)}w(\lambda)$, $\lambda\in\mathbb{R}$, $i=\overline{1,m}$, possess the following properties:\\
\hspace*{18mm}\textbf{1)} $\varphi_i\in L_\infty(\mathbb{R})\cap L_1(\mathbb{R})$;\\
\hspace*{18mm}\textbf{2)} $\OpLim{\Var}{-\infty}{+\infty}\,\varphi_i<\infty$;\\
\hspace*{18mm}\textbf{3)} $\underset{\eta\to1}\lim\,\underset{\lambda\in\mathbb{R}}\sup\, \l|\varphi_i(\eta\lambda)-\varphi_i(\lambda)\r|=0$ ;\\
\hspace*{18mm}\textbf{4)} $\varphi_i$ are differentiable and $\varphi'_i$ are uniformly continuous on $\mathbb{R}$.\\
\hspace*{12mm}\textbf{(ii)} $\dfrac{|f_i(\lambda,\,\theta)|}{f(\lambda,\,\theta)}w(\lambda) \le Z_2(\lambda)$, $\theta\in\Theta$, $i=\overline{1,m}$, almost everywhere in $\lambda\in\mathbb{R}$ and $Z_2(\cdot)\in L_1(\mathbb{R})$.\\
\hspace*{11mm}\textbf{(iii)} The functions $\dfrac{f_i(\lambda,\,\theta) f_j(\lambda,\,\theta)}{f^2(\lambda,\,\theta)}w(\lambda)$, $\dfrac{f_{ij}(\lambda,\,\theta)}{f(\lambda,\,\theta)}w(\lambda)$ are continuous with respect to $\theta\in\Theta^c$ for each $\lambda\in\mathbb{R}$ and
 \[
   \dfrac{f_i^2(\lambda,\,\theta)}{f^2(\lambda,\,\theta)}w(\lambda) +\dfrac{|f_{ij}(\lambda,\,\theta)|}{f(\lambda,\,\theta)}w(\lambda)\le a_{ij}(\lambda),\ \lambda\in\mathbb{R},\ \theta\in\Theta^c,
 \]
where $a_{ij}(\cdot)\in L_1(\mathbb{R})$, $i,j=\overline{1,m}$.

\textbf{N$_5$.} \textbf{(i)} $\dfrac{f_i^2(\lambda,\,\theta)}{f^3(\lambda,\,\theta)}w(\lambda)$, $\dfrac{f_{ij}(\lambda,\,\theta)}{f^2(\lambda,\,\theta)}w(\lambda)$, $i,j=\overline{1,m}$, are bounded functions in $(\lambda,\,\theta)\in\mathbb{R}\times\Theta^c$;\\
\hspace*{12mm}\textbf{(ii)} There exists an even positive Lebesgue measurable function $v(\lambda),\ \lambda\in\mathbb{R}$, such that the functions $\dfrac{f_i(\lambda,\,\theta) f_j(\lambda,\,\theta)}{f^3(\lambda,\,\theta)}v(\lambda)$, $\dfrac{f_{ij}(\lambda,\,\theta)}{f^2(\lambda,\,\theta)}v(\lambda)$, $i,j=\overline{1,m}$, are uniformly continuous in $(\lambda,\,\theta)\in\mathbb{R}\times\Theta^c$;\\
\hspace*{11mm}\textbf{(iii)} $\underset{\lambda\in\mathbb{R}}\sup\,\dfrac{w(\lambda)}{v(\lambda)}<\infty$.

Conditions \textbf{N$_5$(iii)} and \textbf{C$_5$(ii)} look the same, however the function $v$ in these conditions must satisfy different conditions \textbf{N$_5$(ii)} and \textbf{C$_5$(i)}, and therefore, generally speaking, the functions $v$ in these two conditions can be different.

The next three matrices appear in the formulation of Theorem 2:
 \[
   \begin{aligned}
     W_1(\theta)&=\int\limits_{\mathbb{R}}\,\nabla_\theta\log f(\lambda,\,\theta)\nabla_\theta'\log f(\lambda,\,\theta)w(\lambda)d\lambda,\\
     W_2(\theta)&=4\pi\int\limits_{\mathbb{R}}\,\nabla_\theta\log f(\lambda,\,\theta)\nabla_\theta'\log f(\lambda,\,\theta) w^2(\lambda)d\lambda,\\
     V(\theta)&=\gamma_2\int\limits_{\mathbb{R}}\,\nabla_\theta\log f(\lambda,\,\theta)w(\lambda)d\lambda \int\limits_{\mathbb{R}}\,\nabla_\theta'\log f(\lambda,\theta)w(\lambda)d\lambda,
   \end{aligned}
 \]
where $\nabla_\theta$ is a column vector-gradient, $\nabla_\theta'$ is a row vector-gradient.

\textbf{N$_6$.} Matrices $W_1(\theta)$ and $W_2(\theta)$ are positive definite for $\theta\in\Theta$.

\begin{theorem}\label{thm_MCE_asym_norm}
 Under conditions \textbf{A$_1$, A$_2$, C$_1$ -- C$_5$} and \textbf{N$_1$ -- N$_6$} the normed MCE\ \  $T^{\frac12}(\widehat{\theta}_T-\theta_0)$ is asymptotically, as $T\to\infty$, normal with zero mean and covariance matrix
  \begin{equation}
      W(\theta)=W_1^{-1}(\theta_0)\l(W_2(\theta_0)+V(\theta_0)\r) W_1^{-1}(\theta_0).
   \label{cov_mtrx_W}
  \end{equation}
\end{theorem}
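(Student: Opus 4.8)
\begin{prfthm2}
My plan is to follow the usual minimum-contrast scheme, the new ingredient being the limit theory for functionals of the Lévy-driven noise. By Theorem~\ref{thm_MCE_cons}, $\widehat{\theta}_T\overset{\Prob}\longrightarrow\theta_0$, and since $\theta_0\in\Theta$ with $\Theta$ open, $\Prob\{\widehat{\theta}_T\in\Theta\}\to1$. The field $U_T(\cdot,\widehat{\alpha}_T)$ is twice continuously differentiable in $\theta$ (differentiation under the integral sign in \eqref{Wh_ctrst_fn} being legitimate by the $L_1$-dominations of \textbf{N$_4$}), so on the event $\{\widehat{\theta}_T\in\Theta\}$ it has an interior minimum at $\widehat{\theta}_T$, giving $\nabla_\theta U_T(\widehat{\theta}_T,\widehat{\alpha}_T)=0$. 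A first-order Taylor expansion in $\theta$ about $\theta_0$ then yields, on that event,
\[
  0=\nabla_\theta U_T(\theta_0,\widehat{\alpha}_T)+A_T\bigl(\widehat{\theta}_T-\theta_0\bigr),\qquad
  A_T=\int_0^1\nabla^2_\theta U_T\bigl(\theta_0+s(\widehat{\theta}_T-\theta_0),\widehat{\alpha}_T\bigr)\,ds,
\]
so $T^{\frac12}(\widehat{\theta}_T-\theta_0)=-A_T^{-1}\,T^{\frac12}\nabla_\theta U_T(\theta_0,\widehat{\alpha}_T)$. It then suffices to prove \textbf{(a)} $A_T\overset{\Prob}\longrightarrow W_1(\theta_0)$ and \textbf{(b)} $T^{\frac12}\nabla_\theta U_T(\theta_0,\widehat{\alpha}_T)$ is asymptotically $N(0,W_2(\theta_0)+V(\theta_0))$; then, $W_1(\theta_0)$ being symmetric and positive definite by \textbf{N$_6$}, Slutsky's theorem gives the limiting covariance $W_1^{-1}(\theta_0)(W_2(\theta_0)+V(\theta_0))W_1^{-1}(\theta_0)$, which is \eqref{cov_mtrx_W}.

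For \textbf{(a)} I would establish the second-derivative analogue of Lemma~\ref{lema_cnv_sup_dif_cntrst_fld}, $\sup_{\theta\in\Theta^c}\|\nabla^2_\theta U_T(\theta,\widehat{\alpha}_T)-\nabla^2_\theta U(\theta)\|\overset{\Prob}\longrightarrow0$, where $\nabla^2_\theta U(\theta)$ is obtained by differentiating twice the limit $U(\theta)$ of Corollary~\ref{cor_cnv_cntrst_fld}. Its proof copies that of Lemma~\ref{lema_cnv_sup_dif_cntrst_fld} with the weight $w/f$ replaced by the signed weights $(f_{ij}f^{-2}-2f_if_jf^{-3})w$ arising from $\nabla^2_\theta(1/f)$: Lemma~\ref{lema_cnv_J_T^eps} applies to these weights, which are bounded by \textbf{N$_5$(i)} (here only $f_4\in L_1$ from \textbf{A$_2$(i)} is needed); the continuity in $\theta$ and the $L_1$-domination of the deterministic limit come from \textbf{N$_4$(iii)}; the $\delta$-net/equicontinuity step uses the uniform continuity in $(\lambda,\theta)$ of \textbf{N$_5$(ii)} with $\sup_\lambda w/v<\infty$ from \textbf{N$_5$(iii)}; and the regression corrections $J_T^{(1)},J_T^{(2)}$ for these weights tend to zero in probability as in \eqref{J_T^1phi_2_0}--\eqref{J_T^2phi_2_0}. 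Since $\theta_0+s(\widehat{\theta}_T-\theta_0)\overset{\Prob}\longrightarrow\theta_0$, this gives $A_T\overset{\Prob}\longrightarrow\nabla^2_\theta U(\theta_0)$, and a direct differentiation in which the $\nabla^2_\theta f/f$ terms cancel at $\theta_0$ identifies $\nabla^2_\theta U(\theta_0)=\int_{\mathbb R}\nabla_\theta\log f(\lambda,\theta_0)\nabla_\theta'\log f(\lambda,\theta_0)w(\lambda)\,d\lambda=W_1(\theta_0)$; hence $A_T^{-1}\overset{\Prob}\longrightarrow W_1^{-1}(\theta_0)$.

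For \textbf{(b)}, since $\nabla_\theta U(\theta_0)=0$ we have $\nabla_\theta U_T(\theta_0,\widehat{\alpha}_T)=-(J_T(\varphi,\widehat{\alpha}_T)-J(\varphi))$ with the vector weight $\varphi=(\varphi_i)$, $\varphi_i(\lambda)=f_i(\lambda,\theta_0)f^{-2}(\lambda,\theta_0)w(\lambda)\in L_\infty(\mathbb R)\cap L_1(\mathbb R)$ by \textbf{N$_4$(i)}, and with the decomposition $J_T=J_T^{\varepsilon}+J_T^{(1)}+J_T^{(2)}$ of the excerpt
\[
  T^{\frac12}\nabla_\theta U_T(\theta_0,\widehat{\alpha}_T)=-T^{\frac12}\bigl(J_T^{\varepsilon}(\varphi)-\ExpV J_T^{\varepsilon}(\varphi)\bigr)-T^{\frac12}\bigl(\ExpV J_T^{\varepsilon}(\varphi)-J(\varphi)\bigr)-T^{\frac12}J_T^{(1)}(\varphi)-T^{\frac12}J_T^{(2)}(\varphi).
\]
I would treat the four terms as follows. \emph{(i)} $T^{\frac12}J_T^{(2)}(\varphi)\overset{\Prob}\longrightarrow0$, from $|J_T^{(2)}(\varphi)|\le c_0c(\varphi)T^{-1}\|d_T(\alpha_0)(\widehat{\alpha}_T-\alpha_0)\|^2$ (cf.\ the bound before \eqref{J_T^2phi_2_0}) and \textbf{N$_1$}. \emph{(ii)} $T^{\frac12}J_T^{(1)}(\varphi)\overset{\Prob}\longrightarrow0$: Taylor-expanding $s_T(\lambda,\widehat{\alpha}_T)$ in $\alpha$ makes $T^{\frac12}J_T^{(1)}(\varphi)$ a finite sum of factors $T^{\frac12}(\widehat{\alpha}_{iT}-\alpha_{0i})=O_{\Prob}(1)$ (by \textbf{N$_1$} and $\liminf_{T\to\infty}T^{-1/2}d_{iT}(\alpha_0)>0$) times linear-in-$\varepsilon$ functionals $T^{-1}\int_{\mathbb R}\re\{\varepsilon_T(\lambda)\overline{g_{iT}(\lambda,\alpha)}\}\varphi(\lambda)\,d\lambda$, $g_{iT}(\lambda,\alpha)=\int_0^Te^{-\mathrm{i}\lambda t}g_i(t,\alpha)\,dt$, which have zero mean and, since $f(\cdot,\theta_0)$ and $\varphi$ are bounded, $\sup_t|g_i(t,\alpha)|$ is bounded and $\int_{\mathbb R}|g_{iT}(\lambda,\alpha)|^2\,d\lambda=2\pi d_{iT}^2(\alpha)=O(T)$ (the uniformity over the intermediate $\alpha$ coming from \textbf{N$_2$}, \textbf{N$_3$}), second moment $O(T^{-1})$. \emph{(iii)} the bias satisfies $T^{\frac12}(\ExpV J_T^{\varepsilon}(\varphi)-J(\varphi))\to0$: from $\ExpV J_T^{\varepsilon}(\varphi_i)=\int_{\mathbb R}G_2(u)\mathrm{F}_T^{(2)}(u)\,du$ (proof of Lemma~\ref{lema_cnv_J_T^eps}) and the regularity of $\varphi_i$ in \textbf{N$_4$(i)} (bounded total variation; $\sup_\lambda|\varphi_i(\eta\lambda)-\varphi_i(\lambda)|\to0$ as $\eta\to1$; $\varphi_i'$ uniformly continuous) the Fejér-smoothing error is $o(T^{-1/2})$ by an Ibragimov-type estimate. \emph{(iv)} $T^{\frac12}(J_T^{\varepsilon}(\varphi)-\ExpV J_T^{\varepsilon}(\varphi))$ is asymptotically $N(0,\Sigma)$ — the central limit theorem for the quadratic functional $\tfrac1{2\pi T}\int_{\mathbb R}|\varepsilon_T(\lambda)|^2\varphi_i(\lambda)\,d\lambda$ of the linear process $\varepsilon$, supplied by Theorem~\ref{thm_AN4fncnl_appB} and Lemma~\ref{lema_AN_lin_fnctnl} of Appendix~\ref{app_LSE_AsymNorm} under the full strength of \textbf{A$_1$} and \textbf{A$_2$(i)}; its covariance, $\lim_{T\to\infty}T\,\ExpV[(J_T^{\varepsilon}(\varphi_i)-\ExpV J_T^{\varepsilon}(\varphi_i))(J_T^{\varepsilon}(\varphi_j)-\ExpV J_T^{\varepsilon}(\varphi_j))]=2\pi G_4(0,0,0)$, is computed exactly as $T\Var J_T^{\varepsilon}(\varphi)$ in the proof of Lemma~\ref{lema_cnv_J_T^eps}: the $G_4^{(1)}$-part yields $4\pi\int_{\mathbb R}\nabla_\theta\log f\,\nabla_\theta'\log f\,w^2\,d\lambda=W_2(\theta_0)$, and the $G_4^{(2)}$-part, using $f_4(\lambda,-\lambda,\mu)=(2\pi)^{-1}\gamma_2f(\lambda,\theta_0)f(\mu,\theta_0)$ from \eqref{spec_den_H-ordr}, yields $\gamma_2\int_{\mathbb R}\nabla_\theta\log f\,w\,d\lambda\int_{\mathbb R}\nabla_\theta'\log f\,w\,d\lambda=V(\theta_0)$, so $\Sigma=W_2(\theta_0)+V(\theta_0)$. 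Combining (i)--(iv) proves \textbf{(b)}, and the theorem follows by Slutsky's theorem.

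The step I expect to be the main obstacle is (iv): since $\varepsilon$ is Lévy- rather than Gaussian-driven, the classical tools (Wick products, diagram formulae for Gaussian fields) are unavailable, and the central limit theorem must be established by the method of cumulants for a quadratic form in the linear functional $\int_{\mathbb R}\hat{a}(\cdot-s)\,dL(s)$ of the two-sided Lévy process; bounding all of its cumulants — and hence obtaining the CLT — is precisely where the exponential-moment condition \eqref{bnd_int_e^luPi} of \textbf{A$_1$} and the summability of the higher-order spectral densities \textbf{A$_2$(i)} are used, and this is carried out in Appendix~\ref{app_LSE_AsymNorm}. The subsidiary technical points are the uniform-in-$\alpha$ control of the Taylor remainder in (ii) and the sharp $o(T^{-1/2})$ bias bound in (iii).
\end{prfthm2}
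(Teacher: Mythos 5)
Your overall architecture coincides with the paper's: first-order condition plus Taylor expansion, convergence of the Hessian to $W_1(\theta_0)$, a CLT for the score via the quadratic-form result of Lemma~\ref{lema_AN_lin_fnctnl} with $b=(2\pi)^{-1}\Psi$ (your variance computation, including $f_4(\lambda,-\lambda,\mu)=(2\pi)^{-1}\gamma_2 f(\lambda)f(\mu)$, matches and gives $W_2(\theta_0)+V(\theta_0)$), the Bentkus--Ibragimov bias estimate for $T^{1/2}(\ExpV J_T^\varepsilon(\varphi)-J(\varphi))$, and Slutsky. Your step \textbf{(a)} takes a slightly different but legitimate route: a uniform-in-$\theta$ convergence of the Hessian via a $\delta$-net, where the paper instead evaluates at the random intermediate point and uses Lemmas \ref{lema_cnv_I_thta_T_2_thta_0}--\ref{lema_cnv_int_phi_I_T^e_w}; both work from the same conditions, and your integral form of the remainder avoids the coordinatewise Taylor caveat.

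The genuine gap is your item \emph{(ii)}, the cross term $T^{1/2}J_T^{(1)}(\varphi)$, which up to constants is exactly the paper's $A_T^{(2)}=\pi^{-1}\Delta_T(\varphi_i)$ and is the object of the paper's main new technical lemma (Lemma~\ref{lema_cnv_Dlta_2_0}). Your argument rests on ``$\sup_t|g_i(t,\alpha)|$ is bounded and $\int_{\mathbb R}|g_{iT}(\lambda,\alpha)|^2d\lambda=2\pi d_{iT}^2(\alpha)=O(T)$'', so that the linear-in-$\varepsilon$ factor has second moment $O(T^{-1})$ while $T^{1/2}(\widehat\alpha_{iT}-\alpha_{i0})=O_\Prob(1)$. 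Neither bound is implied by the stated conditions: the paper only assumes $\liminf T^{-1/2}d_{iT}(\alpha_0)>0$ and \textbf{N$_3$(i)} gives $\sup_t|g_i|\le c^i(R)T^{-1/2}d_{iT}(\alpha_0)$, and for the trigonometric regression the paper explicitly wants to cover, $d_{iT}\asymp T^{3/2}$ for the frequency components, so $\sup_t|g_i|\asymp T$ and your rate bookkeeping breaks down (the correct pairing must be $d_{iT}(\widehat\alpha_{iT}-\alpha_{i0})$ against a $d_{iT}^{-1}$-normalized functional). Moreover, after a Taylor expansion of $s_T(\lambda,\widehat\alpha_T)$ the coefficient is evaluated at a random intermediate point depending on $\widehat\alpha_T$, hence not independent of $\varepsilon$, so ``zero mean, second moment $O(T^{-1})$'' cannot be applied as stated; and the second-moment computation itself needs the weight $\varphi_i$ to be reduced to a tractable kernel. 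The paper resolves all three difficulties at once by approximating $\varphi_i$ by Levitan polynomials (this is what conditions \textbf{N$_4$(i)3),4)} and Appendix~\ref{app_Levitan_polnml} are for), passing to shifted time-domain integrals, expanding $g$ at $\alpha_0$ so that the first-order term has deterministic coefficients (bounded via Cauchy--Schwarz with $\|B\|_2$) and the second-order remainder is controlled by the sup bounds \textbf{N$_3$(ii),(iii)} on the event $\|d_T(\alpha_0)(\widehat\alpha_T-\alpha_0)\|\le R$, with \textbf{N$_2$} used for the tail $|\lambda|>\Lambda$ via integration by parts. Without some version of this argument your proof of \textbf{(b)} is incomplete. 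A secondary inaccuracy: the quadratic-form CLT is not ``carried out in Appendix~\ref{app_LSE_AsymNorm}''; Theorem~\ref{thm_AN4fncnl_appB} there is a CLT for the linear functional $\int_0^T\varepsilon(t)b_{iT}(t,\alpha)dt$ used for the LSE, while the quadratic-form CLT is imported from \cite{AvLeoSa_oSTLTHUBLIetc} as Lemma~\ref{lema_AN_lin_fnctnl} without proof in this paper.
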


The proof of the theorem is preceded by several lemmas. The next statement is Theorem 5.1~\cite{AvLeoSa_oSTLTHUBLIetc} formulated in a form convenient to us.
\begin{lemma}\label{lema_AN_lin_fnctnl}
 Let the stochastic process\ \ $\varepsilon$\ \  satisfies \textbf{A$_1$, A$_2$},\ \ spectral density $f\in L_p(\mathbb{R})$,\ \ a function $b\in L_q(\mathbb{R})\bigcap L_1(\mathbb{R})$, where $\dfrac1{p}+\dfrac1{q}=\dfrac12$. Let
   \begin{equation}
     \hat{b}(t)=\int\limits_{\mathbb{R}}\,e^{i\lambda t}b(\lambda)d\lambda
   \label{def_Ftrfrm_b}
  \end{equation}
 and
  \begin{equation}
     Q_T=\int\limits_0^T\int\limits_0^T\, \l(\varepsilon(t)\varepsilon(s) -B(t-s)\r)\hat{b}(t-s)dtds.
   \label{def_fnctinl_Q_T}
  \end{equation}
 Then the central limit theorem holds:
  \[
    T^{-\frac12}Q_T\ \Rightarrow\ N(0,\,\sigma^2),\ \ \text{as}\ \ T\rightarrow\infty,
  \]
 where $"\Rightarrow"$ means convergence in distributions,
  \begin{equation}
     \sigma^2=16\pi^3\int\limits_{\mathbb{R}}\,b^2(\lambda) f^2(\lambda)d\lambda+\gamma_2\l(2\pi\int\limits_{\mathbb{R}}\, b(\lambda)f(\lambda)d\lambda\r)^2.
   \label{cov_mtrx_sigma^2}
  \end{equation}
 In particular, the statement is true for $p=2$ and $q=\infty$.
\end{lemma}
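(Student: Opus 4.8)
The statement is a specialisation of Theorem~5.1 of \cite{AvLeoSa_oSTLTHUBLIetc}, so the plan is to rewrite $Q_T$ in the form treated there and then to identify the limiting variance. First I would pass to the spectral domain: substituting \eqref{def_Ftrfrm_b} into \eqref{def_fnctinl_Q_T} and applying Fubini gives
\[
  Q_T=\int_{\mathbb R}b(\lambda)\Bigl(\bigl|\varepsilon_T(\lambda)\bigr|^2-\ExpV\bigl|\varepsilon_T(\lambda)\bigr|^2\Bigr)d\lambda =2\pi T\int_{\mathbb R}b(\lambda)\bigl(I_T^\varepsilon(\lambda)-\ExpV I_T^\varepsilon(\lambda)\bigr)d\lambda ,
\]
so that $T^{-1/2}Q_T$ is exactly a normalised linear functional of the centred periodogram with weight $b$. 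The condition $b\in L_1(\mathbb R)$ makes $\hat b$ bounded and $b\in L_2(\mathbb R)$ (which follows from $b\in L_q(\mathbb R)\cap L_1(\mathbb R)$ since $q\ge2$ here) makes $\hat b\in L_2(\mathbb R)$, so $Q_T\in L_2(\Omega)$ and the rewriting is legitimate.

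Next I would verify that the hypotheses of Theorem~5.1 of \cite{AvLeoSa_oSTLTHUBLIetc} are met. Condition \textbf{A$_1$} provides that $\varepsilon$ is a second-order Lévy-driven linear process whose driving Lévy measure has a finite exponential moment of some order $p>0$; this yields finiteness of all cumulants $d_r$ of $L(1)$ and well-definedness of the higher-order spectra \eqref{spec_den_H-ordr}. Condition \textbf{A$_2$(i)} supplies $f_r\in L_1(\mathbb R^{r-1})$ for every $r\ge2$. The assumptions $f\in L_p(\mathbb R)$ and $b\in L_q(\mathbb R)\cap L_1(\mathbb R)$ with $1/p+1/q=1/2$ are precisely the integrability requirements on the spectral density and the weight under which the cited theorem delivers asymptotic normality of $T^{-1/2}Q_T$; the case $p=2$, $q=\infty$ is the one actually invoked later.

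It then remains to match the limiting variance with \eqref{cov_mtrx_sigma^2}. I would compute $\Var Q_T$ by the moment--cumulant expansion: using \eqref{4ordr_mmnt_fn},
\[
  \Var Q_T=\int_{[0,T]^4}\bigl[B(t_1-t_2)B(s_1-s_2)+B(t_1-s_2)B(s_1-t_2)+c_4(t_1,s_1,t_2,s_2)\bigr]\hat b(t_1-s_1)\hat b(t_2-s_2)\,dt_1ds_1dt_2ds_2 .
\]
Passing to the spectral domain (using $B(u)=\int f(\lambda)e^{iu\lambda}d\lambda$, the representation $\hat b(u)=\int b(\mu)e^{iu\mu}d\mu$, and \eqref{spec_den_H-ordr} for $c_4$), each of the two covariance--product terms, after the change of variables to the frequency increments appearing in the oscillatory integrals and an application of the Fejér-kernel localisation of Lemma~\ref{lema_int_Fejer_krnl} (the integrands being bounded and continuous at the origin, with $f$ and $b$ even), contributes $8\pi^3\int_{\mathbb R}b^2(\lambda)f^2(\lambda)\,d\lambda$ to $T^{-1}\Var Q_T$; the $c_4$-term, through \eqref{spec_den_H-ordr} and $d_4=\gamma_2 d_2^2$, contributes $\gamma_2\bigl(2\pi\int_{\mathbb R}b(\lambda)f(\lambda)\,d\lambda\bigr)^2$. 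Adding these gives $\sigma^2$ as in \eqref{cov_mtrx_sigma^2}.

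The main obstacle is not any single estimate but the faithful translation of our conditions \textbf{A$_1$, A$_2$} into the (more abstract) hypotheses of Theorem~5.1 of \cite{AvLeoSa_oSTLTHUBLIetc}, and in particular verifying that the exponential-moment assumption on $\Pi$ in \textbf{A$_1$} is strong enough to control all the cumulant quantities entering that theorem; the accompanying bookkeeping of the $(2\pi)$-factors, needed to land on the exact constants $16\pi^3$ and $(2\pi)^2$ in \eqref{cov_mtrx_sigma^2}, is routine but must be carried out carefully.
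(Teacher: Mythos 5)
Your proposal is consistent with the paper, which in fact gives no proof of this lemma at all: it is simply stated as Theorem~5.1 of \cite{AvLeoSa_oSTLTHUBLIetc} ``formulated in a form convenient to us''. Your reduction of $T^{-1/2}Q_T$ to the quadratic-form setting of that theorem, together with the Fej\'er-kernel bookkeeping identifying the constants $16\pi^3$ and $\gamma_2\bigl(2\pi\int b f\bigr)^2$ (with $b$ even, as in the intended application), is essentially the same approach, just carried out more explicitly.
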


Alternative form of Lemma \ref{lema_AN_lin_fnctnl} is given in Bai et al.~\cite{BaiGiTa_LT4QFoLDCTLP}. We formulate their Theorem 2.1 in the form convenient to us.

\begin{lemma}\label{lema_AN_lin_fnctnl_Bai}
 Let the stochastic process $\varepsilon$  be such that $\ExpV L(1)=0$, $\ExpV L^4(1)<\infty$, and $Q_T$ be as in \eqref{def_fnctinl_Q_T}.
 Assume that $\hat{a}\in L_p(\mathbb{R})\cap L_2(\mathbb{R})$, $\hat{b}$ is of the form \eqref{def_Ftrfrm_b} with even function $b\in L_1(\mathbb{R})$ and $\hat{b}\in L_q(\mathbb{R})$ with
  \[
    1\le p,\,q\le2,\ \ \dfrac2{p}+\dfrac1{q}\ge\dfrac52,
  \]
 then
  \[
    T^{-\frac12}Q_T\ \Rightarrow\ N(0,\,\sigma^2),\ \ \text{as}\ \ T\rightarrow\infty,
  \]
 where $\sigma^2$ is given in \eqref{cov_mtrx_sigma^2}.
\end{lemma}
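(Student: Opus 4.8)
The statement is essentially a transcription of Theorem~2.1 of Bai, Ginovyan and Taqqu~\cite{BaiGiTa_LT4QFoLDCTLP} into the normalization used in the present paper, so the plan is to check that the hypotheses of that theorem are met under our assumptions, to invoke it, and then to rewrite the resulting limiting variance in the form~\eqref{cov_mtrx_sigma^2}.

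First I would record that the moment assumptions $\ExpV L(1)=0$ and $\ExpV L^4(1)<\infty$ guarantee, through~\eqref{rep_c_r_by_hat-a}, that $d_2=\ExpV L^2(1)$ and the fourth cumulant $d_4=\ExpV L^4(1)-3(\ExpV L^2(1))^2$ are finite, hence so is the excess $\gamma_2=d_4/d_2^2$ entering~\eqref{cov_mtrx_sigma^2}; together with $\hat{a}\in L_2(\mathbb{R})$ they also ensure that $\varepsilon$ in~\eqref{LinRep_RmdnNse} is a well-defined second-order stationary process with covariance $B$ and second-order spectral density $f(\lambda)=(2\pi)^{-1}d_2|a(\lambda)|^2$. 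Since $b\in L_1(\mathbb{R})$, its transform $\hat{b}$ in~\eqref{def_Ftrfrm_b} is bounded and continuous, and because the double integral in~\eqref{def_fnctinl_Q_T} runs over the bounded set $[0,T]^2$ against $\hat{b}(t-s)$ times an $L_2(\Omega)$ variable with uniformly bounded second moment, $Q_T$ is a well-defined element of $L_2(\Omega)$.

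Next I would match the index conditions. The functional $Q_T$ is exactly the centred quadratic form driven by the kernel $\hat{b}$ treated in~\cite{BaiGiTa_LT4QFoLDCTLP}, and their Theorem~2.1 yields $T^{-\frac12}Q_T\Rightarrow N(0,\sigma^2)$ precisely under $\hat{a}\in L_p(\mathbb{R})\cap L_2(\mathbb{R})$, $b\in L_1(\mathbb{R})$ even, $\hat{b}\in L_q(\mathbb{R})$ with $1\le p,q\le2$ and $\frac2p+\frac1q\ge\frac52$, which is, up to the choice of Fourier normalization, verbatim our hypothesis. So at this step nothing further has to be verified: one only has to check that the process and kernel of~\eqref{def_fnctinl_Q_T} are the objects to which their theorem applies, and then quote it.

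The only genuine work is identifying the limiting variance. Bai et al. express their limit in terms of $a$ (equivalently $f$), the kernel $b$ and the fourth cumulant $d_4$, but with their own Fourier-transform convention. Substituting $f=(2\pi)^{-1}d_2|a|^2$ and the fourth-order spectral density $f_4$ from~\eqref{spec_den_H-ordr}, and tracking the powers of $2\pi$ coming from the conventions for $a$, for $\hat{b}$ and for $b$ (the reflection $\lambda\mapsto-\lambda$ being absorbed by the evenness of $b$), one should recover the two summands of~\eqref{cov_mtrx_sigma^2}: a ``Gaussian'' term $16\pi^3\int_{\mathbb{R}}b^2(\lambda)f^2(\lambda)\,d\lambda$ arising from the product of two second-order spectral densities, and a ``cumulant'' term $\gamma_2\bigl(2\pi\int_{\mathbb{R}}b(\lambda)f(\lambda)\,d\lambda\bigr)^2$ arising from the $f_4$ contribution, with $\gamma_2=d_4/d_2^2$. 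I expect the bookkeeping of these normalization constants --- making them agree with the constants in Lemma~\ref{lema_AN_lin_fnctnl} and in~\eqref{cov_mtrx_sigma^2} --- to be the only delicate point, everything else being an immediate appeal to~\cite{BaiGiTa_LT4QFoLDCTLP}.
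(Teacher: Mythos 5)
Your proposal is correct and matches the paper's treatment: the paper offers no independent proof of this lemma, stating it explicitly as a reformulation of Theorem~2.1 of Bai, Ginovyan and Taqqu~\cite{BaiGiTa_LT4QFoLDCTLP}, so checking that the hypotheses coincide and tracking the Fourier-normalization constants so that the limit variance takes the form~\eqref{cov_mtrx_sigma^2} is all that is required. Your identification of the constant bookkeeping (powers of $2\pi$ and the excess $\gamma_2=d_4/d_2^2$) as the only delicate point is exactly right.
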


\begin{remark}
 It is important to note that conditions of Lemma \ref{lema_AN_lin_fnctnl} are given in frequency domain, while Lemma \ref{lema_AN_lin_fnctnl_Bai} employs the time domain conditions.
\end{remark}

Theorems similar to Lemmas \ref{lema_AN_lin_fnctnl} and \ref{lema_AN_lin_fnctnl_Bai} can be found in paper by Giraitis et al~\cite{GiTaTa_ANoQFoMD}, where the case of martingale-differences were considered. Overview of analogous results for different types of processes is given in the paper by Ginovyan et al~\cite{GiSaTa_tTP4TM&OaiIiP}.

Set
 \[
   \Delta_T(\varphi)=T^{-\frac12}\int\limits_{\mathbb{R}}\, \varepsilon_T(\lambda)\overline{s_T(\lambda,\,\widehat{\alpha}_T)} \varphi(\lambda)d\lambda.
 \]

\begin{lemma}\label{lema_cnv_Dlta_2_0}
 Suppose the conditions \textbf{A$_1$, A$_2$, C$_2$, N$_1$ -- N$_3$} are fulfilled, $\varphi(\lambda)$, $\lambda\in\mathbb{R}$, is a bounded differentiable function satisfying the relation \textbf{3)} of the condition \textbf{N$_4$(i)}, and moreover the derivative $\varphi'(\lambda)$, $\lambda\in\mathbb{R}$, is uniformly continuous on $\mathbb{R}$. Then
 \[
   \Delta_T(\varphi)\overset{\Prob}\longrightarrow 0\ \text{as}\ T\to\infty.
 \]
\end{lemma}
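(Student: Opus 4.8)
The plan is to linearise the regression contribution, so that $\Delta_T(\varphi)$ splits into one genuinely delicate linear term and a harmless quadratic remainder. Put $u_T=d_T(\alpha_0)\l(\widehat{\alpha}_T-\alpha_0\r)$ and expand $g(t,\widehat{\alpha}_T)$ about $\alpha_0$ in Taylor's integral form (which keeps the remainder coefficient jointly measurable):
\[
 g(t,\alpha_0)-g(t,\widehat{\alpha}_T)= -\sum_{i=1}^q d_{iT}^{-1}(\alpha_0)u_{iT}\,g_i(t,\alpha_0) -\sum_{i,l=1}^q d_{iT}^{-1}(\alpha_0)d_{lT}^{-1}(\alpha_0)u_{iT}u_{lT}\,h_{il,T}(t),
\]
$h_{il,T}(t)=\int_0^1(1-\theta)g_{il}\l(t,\alpha_0+\theta d_T^{-1}(\alpha_0)u_T\r)d\theta$. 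Substituting this into $\Delta_T(\varphi)$ gives
\[
 \Delta_T(\varphi)= -\sum_i u_{iT}\,\Delta^{(1)}_{iT}(\varphi) -\sum_{i,l}u_{iT}u_{lT}\,R_{il,T}(\varphi),
\]
\[
 \Delta^{(1)}_{iT}(\varphi)= d_{iT}^{-1}(\alpha_0)T^{-\frac12}\int\limits_{\mathbb{R}}\varepsilon_T(\lambda)\overline{g_{iT}(\lambda,\alpha_0)}\varphi(\lambda)d\lambda,\quad g_{iT}(\lambda,\alpha)=\int\limits_0^T e^{-\mathrm{i}\lambda t}g_i(t,\alpha)dt,
\]
\[
 R_{il,T}(\varphi)= d_{iT}^{-1}(\alpha_0)d_{lT}^{-1}(\alpha_0)T^{-\frac12}\int\limits_{\mathbb{R}}\varepsilon_T(\lambda)\,\overline{\int\limits_0^T e^{-\mathrm{i}\lambda t}h_{il,T}(t)dt}\;\varphi(\lambda)d\lambda .
\]
By \textbf{N$_1$} the vectors $u_T$ are stochastically bounded, so it suffices to prove $\Delta^{(1)}_{iT}(\varphi)\overset{\Prob}\longrightarrow 0$ and, for each fixed $R>0$, $R_{il,T}(\varphi)\mathbb{I}_{\{\|u_T\|\le R\}}\overset{\Prob}\longrightarrow 0$; a cut-off on $\{\|u_T\|>R\}$ then handles both sums.

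The quadratic remainder is routine. Plancherel and $|\varphi|\le c(\varphi)=\sup_\lambda|\varphi(\lambda)|$ give $|R_{il,T}(\varphi)|\le 2\pi c(\varphi)\l(\nu^*_T\r)^{\frac12}d_{iT}^{-1}(\alpha_0)d_{lT}^{-1}(\alpha_0)\l(\int_0^T h_{il,T}^2(t)dt\r)^{\frac12}$ with $\nu^*_T$ as in Lemma~\ref{lema_int_eps^2}. On $\{\|u_T\|\le R\}$ one has $\alpha_0+\theta d_T^{-1}(\alpha_0)u_T\in\alpha_0+d_T^{-1}(\alpha_0)v^c(R)$, so $|h_{il,T}(t)|\le\tfrac12\sup_{t\le T,\,v\in v^c(R)}|g_{il}(t,\alpha_0+d_T^{-1}(\alpha_0)v)|$; then \textbf{N$_3$(ii)} yields $\int_0^T h_{il,T}^2\le\tfrac14(c^{il}(R))^2d_{il,T}^2(\alpha_0)$ and \textbf{N$_3$(iii)} yields $d_{iT}^{-1}(\alpha_0)d_{lT}^{-1}(\alpha_0)d_{il,T}(\alpha_0)\le\tilde c^{il}T^{-\frac12}$. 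Since $\nu^*_T\overset{\Prob}\longrightarrow B(0)$ (Lemma~\ref{lema_int_eps^2}), $R_{il,T}(\varphi)\mathbb{I}_{\{\|u_T\|\le R\}}$ is of order $T^{-\frac12}$ in probability and hence $\overset{\Prob}\longrightarrow 0$.

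The linear term $\Delta^{(1)}_{iT}(\varphi)$ is the heart of the matter: Cauchy--Schwarz in $\lambda$ together with Lemma~\ref{lema_cnv_J_T^eps} only gives $\Delta^{(1)}_{iT}(\varphi)=O(1)$ in probability, and the missing factor $T^{-\frac12}$ must be extracted from the short-range dependence of $\varepsilon$. Since $\ExpV\varepsilon(t)=0$ under \textbf{A$_1$} we have $\ExpV\Delta^{(1)}_{iT}(\varphi)=0$, so it is enough to bound the second moment. Set $\psi_T(\lambda)=\overline{g_{iT}(\lambda,\alpha_0)}\varphi(\lambda)$, which lies in $L_2(\mathbb{R})$; Parseval's formula (with $\varepsilon_T$ the Fourier transform of $\varepsilon\mathbb{I}_{[0,T]}$ and $\widehat{\psi}_T$ that of $\psi_T$) rewrites $\int_{\mathbb{R}}\varepsilon_T(\lambda)\psi_T(\lambda)d\lambda=\int_0^T\varepsilon(t)\widehat{\psi}_T(t)dt$, whence
\[
 \ExpV\l|\int\limits_0^T\varepsilon(t)\widehat{\psi}_T(t)dt\r|^2=\int\limits_0^T\int\limits_0^T B(t-s)\widehat{\psi}_T(t)\overline{\widehat{\psi}_T(s)}\,dt\,ds\le\|B\|_1\int\limits_0^T|\widehat{\psi}_T(t)|^2dt\le 2\pi\|B\|_1\|\psi_T\|_2^2,
\]
where $\|B\|_1\le d_2\|\hat a\|_1^2<\infty$ by \textbf{A$_1$}. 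Finally $\|\psi_T\|_2^2\le c^2(\varphi)\int_{\mathbb{R}}|g_{iT}(\lambda,\alpha_0)|^2d\lambda=2\pi c^2(\varphi)d_{iT}^2(\alpha_0)$ by Plancherel, so $\ExpV|\Delta^{(1)}_{iT}(\varphi)|^2\le 4\pi^2\|B\|_1 c^2(\varphi)T^{-1}\to 0$; thus $\Delta^{(1)}_{iT}(\varphi)\to 0$ in $L_2$, hence in probability. Together with $u_T$ being stochastically bounded this gives $\Delta_T(\varphi)\overset{\Prob}\longrightarrow 0$.

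The main obstacle is exactly this last estimate: Cauchy--Schwarz discards the oscillation and only sees $\int_0^T|\varepsilon(t)g_i(t,\alpha_0)|dt$, whereas one needs that $\int_0^T\varepsilon(t)g_i(t,\alpha_0)dt$ has standard deviation of order $d_{iT}(\alpha_0)$, not $T^{\frac12}d_{iT}(\alpha_0)$ --- which is precisely what $\|B\|_1<\infty$ (i.e. $\hat a\in L_1$) buys. If one instead computes the second moment of $\Delta^{(1)}_{iT}(\varphi)$ directly in the frequency domain, expanding $\ExpV[\varepsilon_T(\lambda)\overline{\varepsilon_T(\mu)}]$ through the Fejér kernel and passing to the limit by a Bentkus-type statement in the spirit of Lemma~\ref{lema_int_Fejer_krnl}, then the remaining hypotheses come into play: integration by parts in $t$ supplies the $\lambda^{-1}$-decay of $g_{iT}(\cdot,\alpha_0)$ controlled through $\Phi'_T$, that is \textbf{N$_2$}; \textbf{N$_3$(i)} controls $d_{iT}^{-1}(\alpha_0)g_{iT}(\cdot,\alpha_0)$; and the regularity of $\varphi$ (the conditions \textbf{3)} and uniform continuity of $\varphi'$) is what makes the limiting lemma applicable for the merely bounded weight $\varphi$.
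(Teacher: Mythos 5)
Your proof is correct, and it takes a genuinely different and more direct route than the paper. The paper first approximates $\varphi$ uniformly by an entire function of exponential type and then by Levitan trigonometric polynomials (Appendix C); the exponentials $e^{\mathrm{i}j\frac{\sigma}{n}\lambda}$ turn the frequency-domain integral into time-shifted integrals $\int\varepsilon(t)\bigl(g(t-j\tfrac{\sigma}{n},\alpha_0)-g(t-j\tfrac{\sigma}{n},\widehat{\alpha}_T)\bigr)dt$, which are then Taylor-expanded and bounded using \textbf{N$_1$}--\textbf{N$_3$}, while the approximation error and the tail $|\lambda|>\Lambda$ are controlled via \textbf{C$_2$}, \textbf{N$_2$} (integration by parts in $t$) and the smoothness/dilation hypotheses on $\varphi$. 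You instead Taylor-expand $g$ inside $s_T$ from the start, so the first-order kernel $g_i(t,\alpha_0)$ is deterministic; a single Parseval step then rewrites the linear term as $\int_0^T\varepsilon(t)\widehat{\psi}_T(t)dt$ with a deterministic $L_2$ kernel, and the variance bound $\ExpV|\Delta_{iT}^{(1)}(\varphi)|^2=O(T^{-1})$ follows from $\|B\|_1\le d_2\|\hat{a}\|_1^2<\infty$ (this is exactly where $\hat{a}\in L_1(\mathbb{R})$ from \textbf{A$_1$} enters; the paper's analogous step uses $B\in L_2$ instead, which is equally available under \textbf{A$_1$}). The quadratic remainder is handled as in the paper, via \textbf{N$_3$(ii),(iii)}, Cauchy--Schwarz and Lemma \ref{lema_int_eps^2}, with \textbf{N$_1$} giving tightness of $d_T(\alpha_0)(\widehat{\alpha}_T-\alpha_0)$. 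The net effect is that you avoid the approximation-theoretic machinery entirely and use only a subset of the stated hypotheses: \textbf{C$_2$}, \textbf{N$_2$}, \textbf{N$_3$(i)}, condition \textbf{3)} of \textbf{N$_4$(i)} and the uniform continuity of $\varphi'$ play no role in your argument, only boundedness of $\varphi$ does; so your proof in fact establishes the lemma under weaker assumptions (though it is tied to $\hat{a}\in L_1$, i.e. short-range dependence, exactly as \textbf{A$_1$} assumes). Two small points you should make explicit if you write this up: the Parseval identity must be invoked for $\psi_T\in L_2(\mathbb{R})$ (it need not be in $L_1$), which is standard, and the Taylor expansion with integral remainder uses the convexity of $\mathcal{A}^c$ together with the twice continuous differentiability in \textbf{N$_3$}, as the paper itself does.
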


\begin{proof}
 Let $B_\sigma$ be the set of all bounded entire functions on $\mathbb{R}$ of exponential type $0\le\sigma<\infty$ (see Appendix \ref{app_Levitan_polnml}), and $\delta>0$ is an arbitrarily small number. Then there exists a function $\varphi_\sigma\in B_\sigma$, $\sigma=\sigma(\delta)$, such that
  \[
    \sup\limits_{\lambda\in\mathbb{R}}\, |\varphi(\lambda)-\varphi_\sigma(\lambda)|<\delta.
  \]
 Let $T_n(\varphi_\sigma;\,\lambda)=\sum\limits_{j=-n}^n\, c_j^{(n)}e^{\mathrm{i}j\frac\sigma{n}\lambda},\ n\geq 1$, be a sequence of the Levitan polynomials that corresponds to $\varphi_\sigma$. For any $\Lambda>0$ there exists $n_0=n_0(\delta,\,\Lambda)$ such that for $n>n_0$
  \[
    \sup\limits_{\lambda\in[-\Lambda,\Lambda]}\, |\varphi_\sigma-T_n(\varphi_\sigma;\,\lambda)|\leq\delta.
  \]
 Write
  \[
    \Delta_T(\varphi)= \Delta_T(\varphi-\varphi_\sigma) +\Delta_T(\varphi_\sigma-T_n)+\Delta_T(T_n),
  \]
  \[
   \begin{aligned}
     |\Delta_T(\varphi-\varphi_\sigma)|&\le \delta T^{-\frac12} \int\limits_{\mathbb{R}}\, \l|\varepsilon_T(\lambda) \overline{s_T(\lambda,\,\widehat{\alpha}_T)}\r|d\lambda\le\\
     &\le\delta T^{-\frac12}\l(\int\limits_{\mathbb{R}}\, \l|\varepsilon_T(\lambda)\r|^2d\lambda\r)^{\frac12} \l(\int\limits_{\mathbb{R}}\, \l| s_T(\lambda,\,\widehat{\alpha}_T)\r|^2d\lambda\r)^{\frac12}=\\
     &=2\pi\delta\l(\nu_T^*\r)^{\frac12}\Phi_T^{\frac12}(\widehat{\alpha}_T,\,\alpha_0) \le 2\pi c^{\frac12}_0\delta\l(\nu_T^*\r)^{\frac12}\l\|d_T(\alpha_0)\l(\widehat{\alpha}_T-\alpha_0\r)\r\|.
   \end{aligned}
  \]

 So, under the condition \textbf{C$_2$}, for any $\rho>0$
  \[
   \begin{aligned}
     \Prob&\l\{|\Delta_T(\varphi-\varphi_\sigma)|\ge\rho\r\}\le \\
      &\ \ \ \le \Prob\l\{\l\|d_T(\alpha_0)\l(\widehat{\alpha}_T-\alpha_0\r)\r\|\ge \dfrac{\rho}{2\pi c^{\frac12}_0\delta(B(0)+1)^{\frac12}}\r\}
     +\Prob\l\{\nu_T^*-B(0)>1\r\}=P_3+P_4.
   \end{aligned}
  \]

 The probability  $P_4\to0$, as $T\to\infty$, and the probability $P_3$ under the condition \textbf{N$_1$} for sufficiently large $T$ (we will write $T>T_0$) can be made less than a preassigned number by chosing $\delta>0$ for a fixed $\rho>0$.

  As far as the function $\varphi_\sigma\in B_\sigma$  and the corresponding sequence of Levitan polynomials $T_n$ are bounded by the same constant, we obtain
  \[
    |\Delta(\varphi_\sigma-T_n)|\le\delta T^{-\frac12} \int\limits_{-\Lambda}^{\Lambda}\, \l|\varepsilon_T(\lambda) \overline{s_T(\lambda,\,\widehat{\alpha}_T)}\r|d\lambda +2c(\varphi_\sigma)T^{-\frac12} \int\limits_{\mathbb{R}\backslash[-\Lambda,\Lambda]}\, \l|\varepsilon_T(\lambda) \overline{s_T(\lambda,\,\widehat{\alpha}_T)}\r|d\lambda=D_1+D_2.
  \]

 The integral in the term $D_1$ can be majorized by an integral over $\mathbb{R}$ and bounded as earlier. We have further
  \[
    \overline{s_T(\lambda,\,\widehat{\alpha}_T)}=(\mathrm{i}\lambda)^{-1} \l[e^{\mathrm{i}\lambda T}(g(T,\,\alpha_0)-g(T,\,\widehat{\alpha}_T)) -(g(0,\,\alpha_0)-g(0,\,\widehat{\alpha}_T)) -\overline{s_T'(\lambda,\,\widehat{\alpha}_T)}\r],
  \]
 where $\overline{s_T'(\lambda,\,\widehat{\alpha}_T)} =\int\limits_0^T\, e^{-\mathrm{i}\lambda t}(g'(t,\,\alpha_0)-g'(t,\,\widehat{\alpha}_T))dt$.

 Under the Lemma conditions
  \[
   \begin{aligned}
     T^{-\frac12}&\int\limits_{\mathbb{R}\backslash[-\Lambda,\Lambda]}\, |\varepsilon_T(\lambda)\overline{s_T(\lambda,\,\widehat{\alpha}_T)} |d\lambda\le T^{-\frac12}\l(\int\limits_{\mathbb{R}\backslash [-\Lambda,\Lambda]}\, \l|\varepsilon_T(\lambda)\r|^2 d\lambda\r)^{\frac12} \cdot\\ &\cdot\l(3\int\limits_{\mathbb{R}\backslash[-\Lambda,\Lambda]}\, \lambda^{-2} \l[\l|g(T,\,\alpha_0)-g(T,\,\widehat{\alpha}_T)\r|^2 +\l|g(0,\,\alpha_0)-g(0,\,\widehat{\alpha}_T)\r|^2 +\l|s_T'(\lambda,\,\widehat{\alpha}_T)\r|^2\r] d\lambda\r)^{\frac12}\le\\
     &\le \sqrt{3}\l(2\pi\nu_T^*\r)^{\frac12}\l(\sqrt{2}\Lambda^{-\frac12}\Bigl(\l|g(T,\,\widehat{\alpha}_T)- g(T,\,\alpha_0)\r| +\l|g(0,\,\widehat{\alpha}_T)-g(0,\,\alpha_0)\r|\Bigr)+\r.\\
     &\hspace*{81mm}\l.+\l(2\pi c'_0\r)^{\frac12}\Lambda^{-1}\l\|d_T(\alpha_0)\l(\widehat{\alpha}_T-\alpha_0\r)\r\|\r).
   \end{aligned}
  \]

 Obviously,
  \[
    g(T,\,\widehat{\alpha}_T)- g(T,\,\alpha_0)=\sum\limits_{i=1}^q\,g_i(T,\,\alpha^*_T), \l(\widehat{\alpha}_{iT}-\alpha_{i0}\r),
  \]
 $\alpha^*_T=\alpha_0+\eta\l(\widehat{\alpha}_T-\alpha_0\r)$, $\eta\in(0,\,1)$, $d_T(\alpha_0)\l(\alpha^*_T-\alpha_0\r)= \eta d_T(\alpha_0)\l(\widehat{\alpha}_T-\alpha_0\r)$, and for any $\rho>0$ and $i=\overline{1,q}$
  \[
   \begin{aligned}
    \Prob\Bigl\{\l|g_i(T,\,\alpha^*_T), \l(\widehat{\alpha}_{iT}-\alpha_{i0}\r)\r|\ge\rho\Bigr\} \le \Prob\Bigl\{\l|g_i(T,\,\alpha^*_T), \l(\widehat{\alpha}_{iT}-\alpha_{i0}\r)\r|\ge\rho,\ \l\|d_T(\alpha_0)\l(\widehat{\alpha}_T-\alpha_0\r)\r\|\le R\Bigr\}&+\\ +\Prob\Bigl\{\l\|d_T(\alpha_0)\l(\widehat{\alpha}_T-\alpha_0\r)\r\|> R\Bigr\}=&P_5+P_6.
   \end{aligned}
  \]

 By condition \textbf{N$_3$(i)} for any $R\ge0$
  \[
   \begin{aligned}
    P_5&\le \Prob\l\{\l(d_{iT}^{-1}(\alpha_0)\sup\limits_{t\in[0,T],\,\|u\|\le R}\, \l|g_i\l(t,\,\alpha_0+d_T^{-1}(\alpha_0)u\r)\r|\r)\cdot \l(d_{iT}^{-1}(\alpha_0)\l|\widehat{\alpha}_{iT}-\alpha_{i0}\r|\r)\ge\rho\r\}\le\\
    &\le\Prob\l\{T^{-\frac12}d_{iT}^{-1}(\alpha_0)\l|\widehat{\alpha}_{iT}-\alpha_{i0}\r|\ge\frac\rho{c^i(R)}\r\}\ \to\ 0,\ \ \text{as}\ \ T\to\infty,
   \end{aligned}
  \]
 according to \textbf{N$_1$} (or \textbf{C$_1$}). On the other hand, by condition \textbf{N$_1$} the value $R$ can be chosen so that for $T>T_0$ the probability $P_6$ becomes less that preassigned number.

 So,
  \[
    g(T,\,\widehat{\alpha}_T)- g(T,\,\alpha_0)\ \overset{\Prob}\longrightarrow\ 0,\ \ \text{as}\ \ T\to\infty,
  \]
 and, similarly, $g(0,\,\widehat{\alpha}_T)- g(0,\,\alpha_0)\ \overset{\Prob}\longrightarrow\ 0$, as $T\to\infty$.

 Moreover, for any $\rho>0$
  \[
   \begin{aligned}
     \Prob\l\{\Lambda^{-1}\l\|d_T(\alpha_0)\l(\widehat{\alpha}_T-\alpha_0\r)\r\|\ge\rho\r\}\le P_6+ \Prob\Bigl\{\Lambda^{-1}\l\|d_T(\alpha_0)\l(\widehat{\alpha}_T-\alpha_0\r)\r\|\ge\rho,\ \l\|d_T(\alpha_0)\l(\widehat{\alpha}_T-\alpha_0\r)\r\|\le R\Bigr\},
   \end{aligned}
  \]
 and the second probability is equal to zero, if $\Lambda>\frac R\rho$.

 Thus for any fixed $\rho>0$, similarly to the probability $P_3$, the probability $P_7=\Prob\{D_2\ge\rho\}$ for $T>T_0$ can be made less than preassigned number by the choice of the  value $\Lambda$.

 Consider
  \[
    \Delta_T(T_n)=T^{-\frac12}\sum\limits_{j=-n}^n\,c_j^{(n)} \int\limits_{\mathbb{R}}\,\varepsilon_T(\lambda) \overline{s_T(\lambda,\,\widehat{\alpha}_T)} e^{\mathrm{i}j\frac\sigma{n}\lambda}d\lambda,
  \]
  \[
    \overline{s_T(\lambda,\,\widehat{\alpha}_T)} e^{\mathrm{i}j\frac\sigma{n}\lambda}= \int\limits_{\frac{j\sigma}n}^{T+\frac{j\sigma}n}\, e^{\mathrm{i}\lambda t}\l(g\l(t-j\dfrac{\sigma}{n},\,\alpha_0\r) -g\l(t-j\dfrac{\sigma}{n},\,\widehat{\alpha}_T\r)\r)dt,\ j=\overline{-n,n}.
  \]
 It means that
  \[
   \begin{aligned}
     \Delta_T(T_n)=2\pi&\sum\limits_{j=1}^n\,c_j^{(n)}T^{-\frac12} \int\limits_{\frac{j\sigma}n}^T\,\varepsilon(t) \l(g\l(t-j\dfrac{\sigma}{n},\,\alpha_0\r) -g\l(t-j\dfrac{\sigma}{n},\,\widehat{\alpha}_T\r)\r)dt+\\
     &+2\pi\sum\limits_{j=-n}^0\,c_j^{(n)}T^{-\frac12} \int\limits_0^{T+\frac{j\sigma}n}\,\varepsilon(t) \l(g\l(t-j\dfrac{\sigma}{n},\,\alpha_0\r) -g\l(t-j\dfrac{\sigma}{n},\,\widehat{\alpha}_T\r)\r)dt.
   \end{aligned}
  \]

 For $j>0$ consider the value
  \[
   \begin{aligned}
     T^{-\frac12}&\int\limits_{\frac{j\sigma}n}^T\,\varepsilon(t) \l(g\l(t-j\dfrac{\sigma}{n},\,\widehat{\alpha}_T\r)- g\l(t-j\dfrac{\sigma}{n},\,\alpha_0\r)\r)dt=\\
     &=\sum\limits_{i=1}^q\,\l(T^{-\frac12}d_{iT}^{-1}(\alpha_0)\int\limits_{\frac{j\sigma}n}^T\, \varepsilon(t)g_i\l(t-j\dfrac{\sigma}{n},\,\alpha_0\r)dt\r) d_{iT}(\alpha_0)(\widehat{\alpha}_{iT} - \alpha_{i0})+\\
     &+\dfrac12\sum\limits_{i,k=1}^q\,\l(T^{-\frac12} \int\limits_{\frac{j\sigma}n}^T\,\varepsilon(t) g_{ik}\l(t-j\dfrac{\sigma}{n},\,\alpha_T^{*}\r)dt\r) (\widehat{\alpha}_{iT}-\alpha_{i0}) \l(\widehat{\alpha}_{kT}-\alpha_{k0}\r)=S_{1T}+\frac12S_{2T},
   \end{aligned}
  \]
 $\alpha_T^{*}=\alpha_0+\bar{\eta}\l(\widehat{\alpha}_T-\alpha_0\r)$, $\bar{\eta}\in(0,\,1)$.

 Note that for $i=\overline{1,q}$
  \[
    d_{iT}(\alpha_0)\l(\widehat{\alpha}_{iT}-\alpha_{i0}\r)\Rightarrow N(0,\,\Sigma_{_{LSE}}^{ii}),\ \ \text{as}\ \ T\to\infty,
  \]
 by the condition \textbf{N$_1$}. Moreover,
  \[
   \begin{aligned}
     \ExpV&\l(T^{-\frac12}d_{iT}^{-1}(\alpha_0)\int\limits_{\frac{j\sigma}n}^T\, \varepsilon(t)g_i\l(t-j\dfrac{\sigma}{n},\,\alpha_0\r)dt\r)^2=\\
     &=T^{-1}d_{iT}^{-2}(\alpha_0)\int\limits_{\frac{j\sigma}n}^T\int\limits_{\frac{j\sigma}n}^T\,B(t-s) g_i\l(t-j\dfrac{\sigma}{n},\,\alpha_0\r)g_i\l(s-j\dfrac{\sigma}{n},\,\alpha_0\r)dtds\le\\
     &\le \l(T^{-2}\int\limits_0^T\int\limits_0^T\,B^2(t-s)dtds\r)^{\frac12}=O\l(T^{-\frac12}\r),
   \end{aligned}
  \]
 since
  \[
    T^{-1}\int\limits_0^T\int\limits_0^T\,B^2(t-s)dtds\ \to\ 2\pi\|f\|_2^2,\ \ \text{as}\ \ T\to\infty.
  \]
 It means that the sum $S_{1T}\overset{\Prob}\longrightarrow0$, as $T\to\infty$.

 For the general term $S_{2T}^{ik}$ of the sum $S_{2T}$ and any $\rho>0$, $R>0$,
  \[
    \Prob\l\{\l|S_{2T}^{ik}\r|\ge\rho\r\}\le P_6+P_8,\ \ P_8=\Prob\Bigl\{\l|S_{2T}^{ik}\r|\ge\rho,\ \l\|d_T(\alpha_0)\l(\widehat{\alpha}_T-\alpha_0\r)\r\|\le R\Bigr\}.
  \]

 Under condition $\l\|d_T(\alpha_0)\l(\widehat{\alpha}_T-\alpha_0\r)\r\|\le R$ using assumptions \textbf{N$_3$(ii)} and \textbf{N$_3$(iii)} we get as in the estimation of the probability $P_5$
  \[
   \begin{aligned}
    \l|S_{2T}^{ik}\r|\le& \l(T^{-\frac12}\int\limits_{\frac{j\sigma}n}^T\, |\varepsilon(t)|dt\r)\cdot \l(d_{ik,T}^{-1}(\alpha_0)\sup\limits_{t\in[0,T],\,u\in v^c(R)}\, \l|g_{ik}\l(t,\,\alpha_0+d_T^{-1}(\alpha_0)u\r)\r|\r)\cdot\\
    &\cdot\Bigl(d_{iT}^{-1}(\alpha_0)d_{kT}^{-1}(\alpha_0) d_{ik,T}(\alpha_0)\Bigr)\cdot
    \l|d_{iT}(\alpha_0)(\widehat{\alpha}_{iT}-\alpha_{i0})\r|\cdot \l|d_{kT}(\alpha_0)(\widehat{\alpha}_{kT}-\alpha_{k0})\r|\le\\
    &\le c^{ik}(R)\tilde{c}^{ik}T^{-\frac32}\int\limits_0^T\, |\varepsilon(t)|dt\cdot\l|d_{iT}(\alpha_0)(\widehat{\alpha}_{iT}-\alpha_{i0})\r|\cdot \l|d_{kT}(\alpha_0)(\widehat{\alpha}_{kT}-\alpha_{k0})\r|.
   \end{aligned}
  \]
 By Lemma \ref{lema_int_eps^2}
  \[
    T^{-\frac32}\int\limits_0^T\, |\varepsilon(t)|dt\le\frac12T^{-\frac12}+\frac12 T^{-\frac32}\int\limits_0^T\, \varepsilon^2(t)dt\ \overset{\Prob}\longrightarrow\ 0,\ \ \text{as}\ \ T\to\infty.
  \]

  So, by condition \textbf{N$_1$} $P_8\to0$, as $T\to\infty$, that is $S_{2T}\ \overset{\Prob}\longrightarrow\ 0$, as $T\to\infty$. For $j\le0$ the reasoning is similar, and
  \[
    \Delta_T(T_n)\overset{\Prob}\longrightarrow0,\ T\to\infty.
  \]
\end{proof}

\begin{lemma}\label{lema_cnv_I_thta_T_2_thta_0}
 Let the function $\varphi(\lambda,\,\theta)w(\lambda)$ be continuous in $\theta\in\Theta^c$ for each fixed $\lambda\in\mathbb{R}$ with
  \[
    |\varphi(\lambda,\,\theta)|\le\varphi(\lambda),\ \theta\in\Theta^c,\ \text{and}\ \varphi(\cdot)w(\cdot)\in L_1(\mathbb{R}).
  \]
 If $\theta_T^{*}\overset{\Prob}\longrightarrow\theta_0$, then
  \[
    I\l(\theta_T^{*}\r)=\int\limits_{\mathbb{R}}\, \varphi\l(\lambda,\,\theta_T^{*}\r)w(\lambda)d\lambda\  \overset{\Prob}\longrightarrow\ \int\limits_{\mathbb{R}}\, \varphi(\lambda,\,\theta_0)w(\lambda)d\lambda=I(\theta_0).
  \]
\end{lemma}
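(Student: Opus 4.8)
The plan is to separate the claim into a deterministic analytic fact — continuity in $\theta$ of the integral $I(\theta)=\int_{\mathbb{R}}\varphi(\lambda,\,\theta)w(\lambda)d\lambda$ on $\Theta^c$ — and a soft probabilistic step that upgrades deterministic continuity to convergence in probability once the argument converges in probability. Note first that $\Theta\subset\mathbb{R}^m$ is a bounded open convex set, so $\Theta^c$ is compact and convex; since $\theta_0\in\Theta\subset\Theta^c$ and $\widehat\theta_T\in\Theta^c$, any intermediate point $\theta_T^{*}$ of the type arising in later Taylor expansions lies in $\Theta^c$, so $I(\theta_T^{*})$ is well defined, and $\theta\mapsto I(\theta)$ being continuous (established below), $I(\theta_T^{*})$ is a genuine random variable.

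The first step is to show $I$ is continuous on $\Theta^c$. Take an arbitrary sequence $\theta_n\to\theta$ in $\Theta^c$. By hypothesis $\varphi(\lambda,\,\theta_n)w(\lambda)\to\varphi(\lambda,\,\theta)w(\lambda)$ for every $\lambda\in\mathbb{R}$, while $|\varphi(\lambda,\,\theta_n)w(\lambda)|\le\varphi(\lambda)w(\lambda)$, a majorant independent of $n$ lying in $L_1(\mathbb{R})$. The dominated convergence theorem then gives $I(\theta_n)\to I(\theta)$. Since the sequence was arbitrary, $I$ is continuous on $\Theta^c$ (in fact uniformly continuous, $\Theta^c$ being compact, although continuity at the single point $\theta_0$ is all that is needed); in particular $I(\theta_0)$ is finite.

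The second step is the conversion. Fix $\rho>0$. Continuity of $I$ at $\theta_0$ provides $\delta=\delta(\rho)>0$ such that $\theta\in\Theta^c$ and $\|\theta-\theta_0\|<\delta$ imply $|I(\theta)-I(\theta_0)|<\rho$. Consequently, for every $T$,
\[
 \Prob\l\{|I(\theta_T^{*})-I(\theta_0)|\ge\rho\r\}\le\Prob\l\{\|\theta_T^{*}-\theta_0\|\ge\delta\r\}\ \longrightarrow\ 0,\ \ \text{as}\ \ T\to\infty,
\]
the limit holding because $\theta_T^{*}\overset{\Prob}\longrightarrow\theta_0$. As $\rho>0$ was arbitrary, $I(\theta_T^{*})\overset{\Prob}\longrightarrow I(\theta_0)$, which is the assertion.

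The argument is essentially routine, and I do not expect a genuine obstacle: the only place the stated hypotheses are actually used is the $L_1$-majorant $\varphi(\cdot)w(\cdot)$, which powers the dominated convergence step — the sole substantive ingredient — and the boundedness/convexity of $\Theta$, which guarantees that $\theta_T^{*}$ stays in the domain $\Theta^c$ where everything is defined.
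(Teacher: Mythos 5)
Your proof is correct and follows essentially the same route as the paper: continuity of $I(\theta)$ on $\Theta^c$ via the dominated convergence theorem (using the $L_1$-majorant $\varphi(\cdot)w(\cdot)$ and nonnegativity of $w$), followed by the standard split of $\Prob\l\{|I(\theta_T^{*})-I(\theta_0)|\ge\rho\r\}$ according to whether $\|\theta_T^{*}-\theta_0\|<\delta$, with the first event having probability zero by the choice of $\delta$ and the second vanishing by $\theta_T^{*}\overset{\Prob}\longrightarrow\theta_0$. No gaps.
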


\begin{proof}
 By a Lebesgue dominated convergence theorem the integral $I(\theta)$, $\theta\in\Theta^c$, is a continuous function. Further argument is standard. For any $\rho>0$ and $\varepsilon=\dfrac{\rho}2$ we find such a $\delta>0$, that $|I(\theta)-I(\theta_0)|<\varepsilon$ as $\|\theta-\theta_0\|<\delta$. Then
  \[
    \Prob\l\{|I(\theta_T^{*})-I(\theta_0)|\ge\rho\r\}=P_9+P_{10},
  \]
 where
  \[
    P_9=\Prob\l\{|I(\theta_T^{*})-I(\theta_0)|\ge\dfrac{\rho}2,\ \|\theta_T^{*}-\theta_0\|<\delta\r\}=0,
  \]
 due to the choice of $\varepsilon$, and
  \[
    P_{10}=\Prob\l\{|I(\theta_T^{*})-I(\theta_0)|\ge\dfrac{\rho}2,\ \|\theta_T^{*}-\theta_0\|\ge\delta\r\}\ \to\ 0,\ \ \text{as}\ \ T\to\infty.
  \]
\end{proof}

\begin{lemma}\label{lema_cnv_int_phi_eps_s_T}
 If the conditions \textbf{A$_1$, C$_2$} are satisfied and $\sup\limits_{\lambda\in \mathbb{R},\,\theta\in\Theta^c}\, |\varphi(\lambda,\,\theta)|=c(\varphi)<\infty$, then
  \[
   \begin{aligned}
     T^{-1}\int\limits_{\mathbb{R}}\,\varphi(\lambda,\,\theta_T^{*}) \varepsilon_T(\lambda)\overline{s_T(\lambda,\,\widehat{\alpha}_T)} d\lambda\ &\overset{\Prob}\longrightarrow\ 0,\ \ \text{as}\ \ T\to\infty,\\
     T^{-1}\int\limits_{\mathbb{R}}\,\varphi(\lambda,\,\theta_T^{*}) |s_T(\lambda,\,\widehat{\alpha}_T)|d\lambda\ &\overset{\Prob}\longrightarrow\  0,\ \ \text{as}\ \ T\to\infty.
   \end{aligned}
  \]
\end{lemma}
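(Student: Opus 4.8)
The plan is to handle both convergences by the mechanism already used for $J_T^{(1)}(\varphi)$ and $J_T^{(2)}(\varphi)$ just before Lemma~\ref{lema_cnv_J_T^eps}: strip the random argument $\theta_T^*$ off the weight via the uniform bound $c(\varphi)$, and then reduce what remains to the weak consistency of the normed LSE. No genuinely new idea is needed.

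For the first relation I would begin with the pointwise estimate $\left|\varphi\left(\lambda,\theta_T^*\right)\right|\le c(\varphi)$ (legitimate because $\theta_T^*\in\Theta^c$), and then apply the Cauchy--Schwarz inequality in $\lambda$ together with the Plancherel identity to obtain
\[
\left|T^{-1}\int_{\mathbb{R}}\varphi\left(\lambda,\theta_T^*\right)\varepsilon_T(\lambda)\overline{s_T(\lambda,\widehat{\alpha}_T)}\,d\lambda\right|
\le c(\varphi)\,T^{-1}\left(\int_{\mathbb{R}}\left|\varepsilon_T(\lambda)\right|^2d\lambda\right)^{\frac12}\left(\int_{\mathbb{R}}\left|s_T(\lambda,\widehat{\alpha}_T)\right|^2d\lambda\right)^{\frac12}.
\]
Since $\int_{\mathbb{R}}\left|\varepsilon_T(\lambda)\right|^2d\lambda=2\pi T\nu_T^*$ and $\int_{\mathbb{R}}\left|s_T(\lambda,\widehat{\alpha}_T)\right|^2d\lambda=2\pi\Phi_T(\widehat{\alpha}_T,\alpha_0)$, condition \textbf{C$_2$} bounds the right-hand side by $2\pi c_0^{\frac12}c(\varphi)\left(\nu_T^*\right)^{\frac12}\left\|T^{-\frac12}d_T(\alpha_0)\left(\widehat{\alpha}_T-\alpha_0\right)\right\|$. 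Lemma~\ref{lema_int_eps^2} shows that $\left(\nu_T^*\right)^{\frac12}$ is stochastically bounded (it converges in probability to $B(0)^{\frac12}$), while $\left\|T^{-\frac12}d_T(\alpha_0)\left(\widehat{\alpha}_T-\alpha_0\right)\right\|\overset{\Prob}\longrightarrow0$ by the weak consistency of the LSE (condition \textbf{C$_1$}, also implied by \textbf{N$_1$}); a tight sequence times a sequence tending to $0$ in probability tends to $0$ in probability, and the first claim follows.

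For the second relation the computation is even shorter: $\left|\varphi\left(\lambda,\theta_T^*\right)\right|\le c(\varphi)$ and the Plancherel identity give
\[
T^{-1}\int_{\mathbb{R}}\varphi\left(\lambda,\theta_T^*\right)\left|s_T(\lambda,\widehat{\alpha}_T)\right|^2d\lambda
\le 2\pi c(\varphi)\,T^{-1}\Phi_T(\widehat{\alpha}_T,\alpha_0)
\le 2\pi c_0c(\varphi)\left\|T^{-\frac12}d_T(\alpha_0)\left(\widehat{\alpha}_T-\alpha_0\right)\right\|^2
\]
by \textbf{C$_2$}, and the right-hand side tends to $0$ in probability by \textbf{C$_1$}.

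I do not anticipate a real obstacle: these are verbatim the estimates already carried out for $J_T^{(1)}$ and $J_T^{(2)}$, the only new feature being the random argument $\theta_T^*$ in the weight, which is neutralised by the bound $c(\varphi)$ uniform over $\theta\in\Theta^c$ --- so, in particular, convergence $\theta_T^*\overset{\Prob}\longrightarrow\theta_0$ is not even needed here. The single point calling for a little more care arises if the second integrand is meant with the first power $\left|s_T(\lambda,\widehat{\alpha}_T)\right|$ rather than with $\left|s_T(\lambda,\widehat{\alpha}_T)\right|^2$: then $s_T(\cdot,\widehat{\alpha}_T)\notin L_1(\mathbb{R})$ in general, and I would split the integral at a large frequency $\Lambda$, majorise the low-frequency part by $(2\Lambda)^{\frac12}\left(\int_{\mathbb{R}}\left|s_T\right|^2d\lambda\right)^{\frac12}$, and integrate by parts in $s_T$ on the high-frequency tail exactly as in the proof of Lemma~\ref{lema_cnv_Dlta_2_0}, invoking \textbf{N$_1$ -- N$_3$} to control the resulting boundary terms and the derivative $s_T'$.
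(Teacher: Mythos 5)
Your proof is correct and coincides with the paper's, which simply notes that both relations are obtained in the same way as \eqref{J_T^1phi_2_0} and \eqref{J_T^2phi_2_0}: the uniform bound $c(\varphi)$ removes the random argument $\theta_T^{*}$, and Cauchy--Schwarz with the Plancherel identity plus conditions \textbf{C$_1$}, \textbf{C$_2$} and Lemma \ref{lema_int_eps^2} finish the argument (the lemma's statement omits \textbf{C$_1$}, but it is tacitly used there exactly as you use it). Your reading of the second integrand as $\l|s_T(\lambda,\,\widehat{\alpha}_T)\r|^2$ is the intended one --- the first power is a misprint, since the lemma is applied to the term $B_T^{(4)}$ containing $\l|s_T(\lambda,\,\widehat{\alpha}_T)\r|^2$ --- so your contingency argument for the first-power case is not needed.
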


\begin{proof}
 These relations are similar to \eqref{J_T^1phi_2_0}, \eqref{J_T^2phi_2_0}, and can be obtained in the same way.
\end{proof}

\begin{lemma}\label{lema_cnv_int_phi_I_T^e_w}
 Let under conditions \textbf{A$_1$, A$_2$} there exists an even positive Lebesgue measurable function $v(\lambda)$, $\lambda\in\mathbb{R}$, and an even Lebesgue measurable in $\lambda$ for any fixed $\theta\in\Theta^c$ function $\varphi(\lambda,\,\theta)$, $(\lambda,\,\theta)\in\mathbb{R}\times\Theta^c$, such that\\
  \hspace*{15mm}\textbf{(i)} $\varphi(\lambda,\,\theta)v(\lambda)$ is uniformly continuous in  $(\lambda,\,\theta)\in\mathbb{R}\times\Theta^c$;\\
  \hspace*{14mm}\textbf{(ii)} $\underset{\lambda\in\mathbb{R}}\sup\,\dfrac{w(\lambda)}{v(\lambda)}<\infty$;\\
  \hspace*{12mm}\textbf{(iii)} $\underset{\lambda\in\mathbb{R},\ \theta\in\Theta^c}\sup\,|\varphi(\lambda,\,\theta)|w(\lambda)<\infty$.\\
 Suppose also that $\theta_T^{*}\overset{\Prob}\longrightarrow\theta_0$, then, as $T\to\infty$,
  \[
    \int\limits_{\mathbb{R}}\,I_T^{\varepsilon}(\lambda)\varphi(\lambda,\,\theta_T^{*})w(\lambda)d\lambda\ \overset{\Prob}\longrightarrow\ \int\limits_{\mathbb{R}}\,f(\lambda,\,\theta_0)\varphi(\lambda,\,\theta_0)w(\lambda)d\lambda.
  \]
\end{lemma}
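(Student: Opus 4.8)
The plan is to reduce the claim, via linearity, to two applications of Lemma~\ref{lema_cnv_J_T^eps} --- once with the deterministic bounded weight $\varphi(\lambda,\theta_0)w(\lambda)$ and once with the weight $w(\lambda)/v(\lambda)$ --- and to absorb the randomness of the argument $\theta_T^{\ast}$ by intersecting with the event $\{\|\theta_T^{\ast}-\theta_0\|<\delta\}$ and using the uniform continuity in~(i). Note first that the limit on the right-hand side is well defined, since $f(\cdot,\theta_0)\in L_1(\mathbb{R})$ by~\textbf{A$_2$} and $\varphi(\cdot,\theta_0)w(\cdot)$ is bounded by~(iii), whence $f(\cdot,\theta_0)\varphi(\cdot,\theta_0)w(\cdot)\in L_1(\mathbb{R})$.

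First I would split
\[
  \int_{\mathbb{R}} I_T^{\varepsilon}(\lambda)\varphi(\lambda,\theta_T^{\ast})w(\lambda)\,d\lambda
  -\int_{\mathbb{R}} f(\lambda,\theta_0)\varphi(\lambda,\theta_0)w(\lambda)\,d\lambda = A_T+B_T,
\]
where $A_T=\int_{\mathbb{R}} I_T^{\varepsilon}(\lambda)\bigl(\varphi(\lambda,\theta_T^{\ast})-\varphi(\lambda,\theta_0)\bigr)w(\lambda)\,d\lambda$ and $B_T=\int_{\mathbb{R}} I_T^{\varepsilon}(\lambda)\varphi(\lambda,\theta_0)w(\lambda)\,d\lambda-\int_{\mathbb{R}} f(\lambda,\theta_0)\varphi(\lambda,\theta_0)w(\lambda)\,d\lambda$. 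Since $\varphi(\cdot,\theta_0)w(\cdot)$ is bounded by~(iii), Lemma~\ref{lema_cnv_J_T^eps} --- whose proof uses only the boundedness of the weight, so that for a signed weight one may split it into its positive and negative parts --- gives $B_T\overset{\Prob}\longrightarrow0$ as $T\to\infty$.

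For $A_T$ I would use $I_T^{\varepsilon}\ge0$ together with the factorization $\bigl(\varphi(\lambda,\theta_T^{\ast})-\varphi(\lambda,\theta_0)\bigr)w(\lambda)=\bigl(\varphi(\lambda,\theta_T^{\ast})v(\lambda)-\varphi(\lambda,\theta_0)v(\lambda)\bigr)\dfrac{w(\lambda)}{v(\lambda)}$. Putting $\eta(\delta)=\sup_{\lambda\in\mathbb{R},\,\|\theta-\theta_0\|\le\delta}\bigl|\varphi(\lambda,\theta)v(\lambda)-\varphi(\lambda,\theta_0)v(\lambda)\bigr|$, condition~(i) yields $\eta(\delta)\to0$ as $\delta\to0$, and on the event $\{\|\theta_T^{\ast}-\theta_0\|<\delta\}$ the bound~(ii) gives
\[
  |A_T|\le \eta(\delta)\int_{\mathbb{R}} I_T^{\varepsilon}(\lambda)\dfrac{w(\lambda)}{v(\lambda)}\,d\lambda .
\]
Since $w/v$ is nonnegative and bounded by~(ii), Lemma~\ref{lema_cnv_J_T^eps} applies again and $\int_{\mathbb{R}} I_T^{\varepsilon}(\lambda)\frac{w(\lambda)}{v(\lambda)}\,d\lambda\overset{\Prob}\longrightarrow K:=\int_{\mathbb{R}} f(\lambda,\theta_0)\frac{w(\lambda)}{v(\lambda)}\,d\lambda<\infty$, so this family is in particular bounded in probability.

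To conclude, fix $\rho>0$ and choose $\delta=\delta(\rho)$ so small that $\eta(\delta)(K+1)<\rho/2$; then
\[
  \Prob\{|A_T|\ge\rho/2\}\le \Prob\{\|\theta_T^{\ast}-\theta_0\|\ge\delta\}
  +\Prob\Bigl\{\int_{\mathbb{R}} I_T^{\varepsilon}(\lambda)\frac{w(\lambda)}{v(\lambda)}\,d\lambda\ge K+1\Bigr\}\ \longrightarrow\ 0 ,
\]
the first probability because $\theta_T^{\ast}\overset{\Prob}\longrightarrow\theta_0$ and the second by the convergence just recorded. Combined with $B_T\overset{\Prob}\longrightarrow0$ this proves the lemma. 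The only step that genuinely requires care --- and it is precisely the device already used in the proof of Lemma~\ref{lema_cnv_sup_dif_cntrst_fld} --- is the separation of the $\lambda$- and $\theta$-dependence through the auxiliary weight $v$: conditions~(i)--(ii) are exactly what allows the $\theta$-oscillation to be pulled out as the scalar factor $\eta(\delta)$, leaving behind a periodogram integral against a fixed bounded weight to which Lemma~\ref{lema_cnv_J_T^eps} is applicable.
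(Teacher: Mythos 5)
Your proposal is correct and follows essentially the same route as the paper's proof: the same decomposition into the term with the fixed weight $\varphi(\cdot,\,\theta_0)w(\cdot)$ (handled by Lemma~\ref{lema_cnv_J_T^eps}) and the difference term, whose $\theta$-oscillation is extracted as a small factor via the uniform continuity of $\varphi v$ and controlled by the periodogram integral against $w/v$, which again converges by Lemma~\ref{lema_cnv_J_T^eps}. Your extra remark on splitting a signed bounded weight into positive and negative parts is a small refinement of a point the paper leaves implicit.
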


\begin{proof}
 We have
  \[
   \begin{aligned}
     \int\limits_{\mathbb{R}}\, I_T^{\varepsilon}(\lambda)\varphi(\lambda,\,\theta_T^{*})w(\lambda)d\lambda =\int\limits_{\mathbb{R}}\,I_T^{\varepsilon}(\lambda) \bigl(\varphi(\lambda,\,\theta_T^{*}) -\varphi(\lambda,\,\theta_0)\bigr)v(\lambda) \dfrac{w(\lambda)}{v(\lambda)}d\lambda&+\\
     +\int\limits_{\mathbb{R}}\,I_T^{\varepsilon}(\lambda)\varphi(\lambda,\,\theta_0)w(\lambda)d\lambda&=I_5+I_6.
   \end{aligned}
  \]

 By Lemma \ref{lema_cnv_J_T^eps} and the condition \textbf{\textit{(iii)}}
  \begin{equation}
      I_6\ \overset{\Prob}\longrightarrow\ \int\limits_{\mathbb{R}}\, f(\lambda,\,\theta_0)\varphi(\lambda,\,\theta_0)w(\lambda) d\lambda,\ \ \text{as}\ \ T\to\infty.
   \label{cnv_I_4_2_int}
  \end{equation}
 On the other hand, for any $r>0$ under the condition \textbf{\textit{(i)}} there exists $\delta=\delta(r)$ such that for $\l\|\theta_T^{*}-\theta_0\r\|<\delta$
  \begin{equation}
     |I_5|\leq r \int\limits_{\mathbb{R}}\, I_T^{\varepsilon}\dfrac{w(\lambda)}{v(\lambda)}d\lambda,
   \label{ineq_I_3_by_r_int}
  \end{equation}
 and by the condition \textbf{\textit{(ii)}}
  \begin{equation}
      \int\limits_{\mathbb{R}}\,I_T^{\varepsilon} \dfrac{w(\lambda)}{v(\lambda)}d\lambda\ \overset{\Prob}\longrightarrow\  \int\limits_{\mathbb{R}}\,f(\lambda,\,\theta_0) \dfrac{w(\lambda)}{v(\lambda)}d\lambda.
   \label{cnv_int_I_T^e*w/v}
  \end{equation}

 The relations \eqref{cnv_I_4_2_int}--\eqref{cnv_int_I_T^e*w/v} prove the lemma.
\end{proof}

\begin{prfthm2}
 By definition of the MCE $\widehat{\theta}_T$, formally using the Taylor formula, we get
  \begin{equation}
       0=\nabla_\theta U_T(\widehat{\theta}_T,\,\widehat{\alpha}_T) =\nabla_\theta U_T(\theta_0,\,\widehat{\alpha}_T) +\nabla_\theta\nabla_\theta'U_T(\theta_T^{*},\,\widehat{\alpha}_T) (\widehat{\theta}_T-\theta_0).
   \label{nblaU_T_Tlr_exp}
  \end{equation}
  Since there is no vector Taylor formula, \eqref{nblaU_T_Tlr_exp} must be taken coordinatewise, that is each row of vector equality \eqref{nblaU_T_Tlr_exp} depends on its own random vector $\theta_T^{*}$, such that $\|\theta_T^{*}-\theta_0\| \leq\|\widehat{\theta}_T-\theta_0\|$. In turn, from \eqref{nblaU_T_Tlr_exp} we have formally
  \[
    T^{\frac12}(\widehat{\theta}_T-\theta_0)=\l(\nabla_\theta\nabla_\theta' U_T(\theta_T^{*},\,\widehat{\alpha}_T)\r)^{-1} \l(-T^{\frac12}\nabla_\theta U_T(\theta_0,\,\widehat{\alpha}_T)\r).
  \]

 As far as the condition \textbf{N$_4$} implies the possibility of differentiation under the sign of the integrals in \eqref{Wh_ctrst_fn}, then
  \begin{equation}
     \begin{aligned}
       -T^{\frac12}\nabla_\theta&U_T(\theta_0,\,\widehat{\alpha}_T)=-T^{\frac12}\int\limits_{\mathbb{R}}\, \l(\nabla_\theta \log f(\lambda,\,\theta_0)+\nabla_\theta\l(\dfrac1{f(\lambda,\,\theta_0)}\r)I_T(\lambda,\,\widehat{\alpha}_T)\r) w(\lambda)d\lambda=\\
       &=T^{\frac12}\int\limits_{\mathbb{R}}\, \l(\dfrac{\nabla_\theta f(\lambda,\,\theta_0)}{f^2(\lambda,\,\theta_0)} I_T^{\varepsilon}(\lambda)-\dfrac{\nabla_\theta f(\lambda,\,\theta_0)}{f(\lambda,\,\theta_0)}\r) w(\lambda)d\lambda+\\
       &+(2\pi)^{-1}T^{-\frac12}\int\limits_{\mathbb{R}}\, \l(2\re\l\{\varepsilon_T(\lambda) \overline{s_T(\lambda,\,\widehat{\alpha}_T)}\r\} +|s_T(\lambda,\,\widehat{\alpha}_T)|^2\r)\dfrac{\nabla_\theta f(\lambda,\,\theta_0)}{f^2(\lambda,\,\theta_0)} w(\lambda)d\lambda=\\
       &=A_T^{(1)}+ A_T^{(2)}+ A_T^{(3)}.
     \end{aligned}
   \label{nblaU_T_exp_by_A1-3}
  \end{equation}

 Similarly
  \begin{equation}
    \begin{aligned}
       \nabla_\theta\nabla_\theta'&U_T(\theta_T^{*},\,\widehat{\alpha}_T)=\int\limits_{\mathbb{R}}\, \l(\nabla_\theta \nabla_\theta' \log f(\lambda,\,\theta_T^{*})+\nabla_\theta\nabla_\theta' \l(\dfrac1{f(\lambda,\,\theta_T^{*})}\r)I_T(\lambda,\,\widehat{\alpha}_T)\r) w(\lambda)d\lambda=\\
        &=\int\limits_{\mathbb{R}}\,\l\{\l( \dfrac{\nabla_\theta \nabla_\theta'f(\lambda,\,\theta_T^{*})}{f(\lambda,\,\theta_T^{*})}  -\dfrac{\nabla_\theta f(\lambda,\,\theta_T^{*}) \nabla_\theta' f(\lambda,\,\theta_T^{*})}{f^2(\lambda,\,\theta_T^{*})}\r)\r.+\\
       &+\l(2\dfrac{\nabla_\theta f(\lambda,\,\theta_T^{*})\nabla_\theta' f(\lambda,\,\theta_T^{*})}{f^3(\lambda,\,\theta_T^{*})} -\dfrac{\nabla_\theta\nabla_\theta'f(\lambda,\,\theta_T^{*})} {f^2(\lambda,\,\theta_T^{*})}\r)\times\\
       &\ \ \  \times\l.(I_T^{\varepsilon}(\lambda)+(\pi T)^{-1}\re \{\varepsilon_T(\lambda) \overline{s_T(\lambda,\,\widehat{\alpha}_T)}\} +(2\pi T)^{-1} |s_T(\lambda,\,\widehat{\alpha}_T)|^2)\r\}w(\lambda)d\lambda=\\
       &\ \ \ \ \ \ \ =B_T^{(1)}+B_T^{(2)}+B_T^{(3)}+B_T^{(4)},
    \end{aligned}
   \label{nbla_nblaU_T_exp_by_B1-4}
  \end{equation}
 where the terms $B_T^{(3)}$ and $B_T^{(4)}$ contain values $\re\{\varepsilon_T(\lambda)\overline{s_T(\lambda,\widehat{\alpha}_T)}\}$ and $|s_T(\lambda,\widehat{\alpha}_T)|^2$, respectively.

 Bearing in mind the 1st part of the condition \textbf{N$_4$(i)}, we take in Lemma~\ref{lema_cnv_Dlta_2_0} the functions
  \[
      \varphi(\lambda)=\varphi_i(\lambda) =\dfrac{f_i(\lambda,\,\theta)}{f^2(\lambda,\,\theta)}w(\lambda),\ i=\overline{1,m}.
  \]
 Then in the formula \eqref{nblaU_T_exp_by_A1-3} $A_T^{(2)}\ \overset{\Prob}\longrightarrow\ 0$, as $T\to\infty$.

 Consider the term $A_T^{(3)}=(a_{iT}^{(3)})_{i=1}^m$, in the sum \eqref{nblaU_T_exp_by_A1-3}
  \[
    a_{iT}^{(3)}=(2\pi)^{-1}T^{-\frac12}\int\limits_{\mathbb{R}}\, |s_T(\lambda,\,\widehat{\alpha}_T)|^2\varphi_i(\lambda)d\lambda,
  \]
 where $\varphi_i(\lambda)$ are as before. Under conditions \textbf{C$_1$, C$_2$, N$_1$} and \textbf{1)} of \textbf{N$_4$(i)} $A_T^{(3)}\ \overset{\Prob}\longrightarrow\ 0$, as $T\rightarrow\infty$, because
  \[
    |a_{iT}^{(3)}|\le c(\varphi_i)T^{-\frac12} \Phi_T(\widehat{\alpha}_T,\,\alpha_0)\le c(\varphi_i)c_0 \|T^{-\frac12}d_T(\alpha_0)\l(\widehat{\alpha}_T-\alpha_0\r)\|\;\|d_T(\alpha_0)\l(\widehat{\alpha}_T- \alpha_0\r)\|\ \overset{\Prob}\longrightarrow\ 0,\ \ \text{as}\ \ T\rightarrow\infty.
  \]

 Examine the behaviour of the terms $B_T^{(1)}-B_T^{(4)}$ in formula \eqref{nbla_nblaU_T_exp_by_B1-4}. Under conditions \textbf{C$_1$} and \textbf{N$_4$(iii)} we can use Lemma \ref{lema_cnv_I_thta_T_2_thta_0} with functions
  \[
    \varphi(\lambda,\,\theta)=\varphi_{ij}(\lambda,\,\theta)=\dfrac{f_{ij}(\lambda,\,\theta)} {f(\lambda,\,\theta)},\ \dfrac{f_i(\lambda,\,\theta) f_j(\lambda,\,\theta)}{f^2(\lambda,\,\theta)},\ i,j=\overline{1,m},
  \]
 to obtain the convergence
  \begin{equation}
      B_T^{(1)}\ \overset{\Prob}\longrightarrow\ \int\limits_{\mathbb{R}}\, \l(\dfrac{\nabla_\theta\nabla_\theta'f(\lambda,\,\theta_0)} {f(\lambda,\,\theta_0)}-\dfrac{\nabla_\theta f(\lambda,\,\theta_0) \nabla_\theta'f(\lambda,\,\theta_0)}{f^2(\lambda,\,\theta_0)}\r) w(\lambda)d\lambda,\ \text{as}\ T\rightarrow\infty.
   \label{cnv_B_T^1_2_int}
  \end{equation}

 Under the condition \textbf{N$_5$(i)}  we can use Lemma \ref{lema_cnv_int_phi_eps_s_T} with functions
  \[
    \varphi(\lambda,\,\theta)=\varphi_{ij}(\lambda,\,\theta)= \dfrac{f_{ij}(\lambda,\,\theta)}{f^2(\lambda,\,\theta)} w(\lambda),\ \dfrac{f_i(\lambda,\,\theta) f_j(\lambda,\,\theta)}{f^3(\lambda,\,\theta)},\ i,j=\overline{1,m},
  \]
 to obtain that
  \[
    B_T^{(3)}\ \overset{\Prob}\longrightarrow\ 0,\ \ B_T^{(4)}\ \overset{\Prob}\longrightarrow\ 0,\ \ \text{as}\ \ T\to\infty.
  \]

 Under conditions \textbf{C$_1$} and \textbf{N$_5$}
  \begin{equation}
       B_T^{(2)}\ \overset{\Prob}\longrightarrow\ \int\limits_{\mathbb{R}}\, \l(2\dfrac{\nabla_\theta f(\lambda,\,\theta_0)\nabla_\theta' f(\lambda,\,\theta_0)}{f^2(\lambda,\,\theta_0)} -\dfrac{\nabla_\theta\nabla_\theta'f(\lambda,\,\theta_0)} {f(\lambda,\,\theta_0)}\r)w(\lambda)d\lambda,
   \label{cnv_B_T^2_2_int}
  \end{equation}
 if we take in Lemma \ref{lema_cnv_int_phi_I_T^e_w} in conditions \textbf{\textit{(i)}} and \textbf{\textit{(iii)}}
  \[
    \varphi(\lambda,\,\theta)=\varphi_{ij}(\lambda,\,\theta)= \dfrac{f_i(\lambda,\,\theta)f_j(\lambda,\,\theta)} {f^3(\lambda,\,\theta)},\ \dfrac{f_{ij}(\lambda,\,\theta)} {f^2(\lambda,\,\theta)}\ i,j=\overline{1,m}.
  \]

 So, under conditions \textbf{C$_1$, C$_2$, N$_4$(iii)} and \textbf{N$_5$}
  \begin{equation}
    \begin{aligned}
       \nabla_\theta\nabla_\theta'U_T(\theta_T^{*},\,\widehat{\alpha}_T)\ \overset{\Prob}\longrightarrow\ &\int\limits_{\mathbb{R}}\,\dfrac{\nabla_\theta f(\lambda,\,\theta_0)\nabla_\theta' f(\lambda,\,\theta_0)}{f^2(\lambda,\,\theta_0)}w(\lambda)d\lambda=\\
       =&\int\limits_{\mathbb{R}}\,\nabla_\theta\log f(\lambda,\,\theta_0)\nabla_\theta'\log f(\lambda,\,\theta_0)w(\lambda)d\lambda=W_1(\theta_0),
    \end{aligned}
   \label{cnv_nbla_nblaU_T_2_W1}
  \end{equation}
 because $W_1(\theta_0)$ is the sum of the right hand sides of \eqref{cnv_B_T^1_2_int} and \eqref{cnv_B_T^2_2_int}.

 From the facts obtained, it follows that for the proof of Theorem \ref{thm_MCE_asym_norm} it is necessary to study an asymptotic behaviour of vector $A_T^{(1)}$ from \eqref{nblaU_T_exp_by_A1-3}:
  \[
    A_T^{(1)}= T^{\frac12}\int\limits_{\mathbb{R}}\,\l(\dfrac{\nabla_\theta f(\lambda,\,\theta_0)}{f^2(\lambda,\,\theta_0)}I_T^{\varepsilon}(\lambda)-\dfrac{\nabla_\theta f(\lambda,\,\theta_0)}{f(\lambda,\,\theta_0)}\r) w(\lambda)d\lambda.
  \]
 We will take
  \[
   \begin{aligned}
    \varphi_i(\lambda)=&\dfrac{f_i(\lambda,\,\theta_0)}{f^2(\lambda,\,\theta_0)}w(\lambda),\ i=\overline{1,m},\\
    \Psi(\lambda)=&\sum\limits_{i=1}^m\,u_i\varphi_i(\lambda),\ \mathrm{u}=\l(u_1,\,\ldots,\,u_m\r)\in\mathbb{R}^m,\\
    Y_T=&\int\limits_{\mathbb{R}}\,I_T^{\varepsilon}(\lambda)\Psi(\lambda)d\lambda,\ \ \ Y=\int\limits_{\mathbb{R}}\,f(\lambda,\,\theta_0)\Psi(\lambda)d\lambda,
   \end{aligned}
  \]
 and write
  \[
    \l<A_T^{(1)},\,\mathrm{u}\r>=T^{\frac12}(Y_T-\ExpV Y_T)+T^{\frac12}(\ExpV Y_T-Y).
  \]

 Under conditions \textbf{1)} and \textbf{2)} of \textbf{N$_4$(i)} \cite{Ben_oEoEoSFoSP,Ibr_aEoSGPSF} for any $u\in\mathbb{R}^m$
  \begin{equation}
      T^{\frac12}(\ExpV Y_T-Y)\ \longrightarrow\ 0, \ \ \text{as}\ \ T\to\infty.
   \label{cnv_T12_EYT-T_2_0}
  \end{equation}

 On the other hand
  \[
    T^{\frac12}(Y_T-\ExpV Y_T)=T^{-\frac12}\int\limits_0^T\int\limits_0^T\, \l(\varepsilon(t)\varepsilon(s) -B(t-s)\r)\hat{b}(t-s)dtds
  \]
 with
  \[
    \hat{b}(t)=\int\limits_{\mathbb{R}}\,e^{\mathrm{i}\lambda t}\,(2\pi)^{-1}\Psi(\lambda)d\lambda .
  \]
 Thus we can apply Lemma \ref{lema_AN_lin_fnctnl} taking $b(\lambda)=(2\pi)^{-1}\Psi(\lambda)$ in the formula \eqref{cov_mtrx_sigma^2} to obtain for any $u\in\mathbb{R}^m$
  \begin{equation}
     T^{\frac12}(Y_T-\ExpV Y_T)\ \Rightarrow\ N(0,\,\sigma^2),\ \ \text{as}\ \ T\rightarrow\infty,
   \label{asymNorm_T12_YT-EYT}
  \end{equation}
 where
  \[
   \begin{aligned}
     \sigma^2=&4\pi\int\limits_{\mathbb{R}}\,\Psi^2(\lambda) f^2(\lambda,\,\theta_0)d\lambda +\gamma_2\l(\int\limits_{\mathbb{R}}\,\Psi(\lambda) f(\lambda,\,\theta_0)d\lambda\r)^2.
   \end{aligned}
  \]

 The relations \eqref{cnv_T12_EYT-T_2_0} and \eqref{asymNorm_T12_YT-EYT} are equivalent to the convergence\
  \begin{equation}
     A_T^{(1)}\ \Rightarrow\ N\l(0,\,W_2(\theta_0)+V(\theta_0)\r),\ \ \text{as}\ \ T\rightarrow\infty.
   \label{A_T^1_2_N0,W2+V}
  \end{equation}

 From \eqref{cnv_nbla_nblaU_T_2_W1} and \eqref{A_T^1_2_N0,W2+V} it follows \eqref{cov_mtrx_W}.
\end{prfthm2}

\begin{remark}
 From the conditions of Theorem \ref{thm_MCE_asym_norm} it follows also the fulfillment of Lemma \ref{lema_AN_lin_fnctnl_Bai} conditions for functions $\hat{a}$ and $\hat{b}$. Really by condition \textbf{A$_1$} $\hat{a}\in L_1(\mathbb{R})\cap L_2(\mathbb{R})$ and we can take $p=1$ in Lemma \ref{lema_AN_lin_fnctnl_Bai}. On the other hand, if we look at $b=(2\pi)^{-1}\Psi$ as at an original of the Fourier transform, from \textbf{N$_4$(i)1)} we have $b\in L_1(\mathbb{R})\cap L_2(\mathbb{R})$. Then according to the Plancherel theorem $\hat{b}\in L_2(\mathbb{R})$ and we can take $q=2$ in Lemma \ref{lema_AN_lin_fnctnl_Bai}. Thus
  \[
    \frac2p+\frac1q=\frac52,
  \]
 and conclusion of Lemma \ref{lema_AN_lin_fnctnl_Bai} is true.
\end{remark}
\section{Example. The motion of a pendulum in a turbulent fluid}\label{sec_Example}

$\indent$First of all we review a number of results discussed in Parzen~\cite{Parz_SP}, Anh et al.~\cite{AnhHeLeo_DMoLMPDbLP}, Leonenko and Papić~\cite{LeoPap_CPoCTAPDnIoSS}, see also references therein.

We examine the stationary Lévy-driven continuous-time autoregressive process $\varepsilon(t),\ t\in\mathbb{R}$, of the order two ( $CAR(2)$-process ) in the under-damped case (see \cite{LeoPap_CPoCTAPDnIoSS} for details).

The motion of a pendulum is described by the equation
 \begin{equation}
    \ddot{\varepsilon}(t)+2\alpha\dot{\varepsilon}(t)+\l(\omega^2+\alpha^2\r)\varepsilon(t)=\dot{L}(t),\ t\in\mathbb{R},
  \label{PendTurbFl_dif_eq}
 \end{equation}
in which $\varepsilon(t)$ is the replacement from its rest position, $\alpha$ is a damping factor, $\dfrac{2\pi}\omega$ is the damped period of the pendulum (see, i.e., \cite{Parz_SP}, p.~111-113).

We consider the Green function solution of the equation \eqref{PendTurbFl_dif_eq}, in which $\dot{L}$ is the Lévy noise, i.e.  the derivative of a Lévy process in the distribution sense (see \cite{AnhHeLeo_DMoLMPDbLP} and \cite{LeoPap_CPoCTAPDnIoSS} for details). The solution can be defined as the linear process
 \[
   \varepsilon(t)=\int\limits_{\mathbb{R}}\,\hat{a}(t-s)dL(s),\ t\in\mathbb{R},
 \]
where the Green function
 \begin{equation}
   \hat{a}(t)=e^{-\alpha t}\, \frac{\sin(\omega t)}\omega\,\mathbb{I}_{[0,\,\infty)}(t),\ \alpha>0.
  \label{hat_a_repr_exle}
 \end{equation}

Assuming $\ExpV L(1)=0$, $d_2=\ExpV L^2(1)<\infty$, we obtain
 \begin{equation}
   B(t)=d_2\int\limits_0^\infty\,\hat{a}(t+s)\hat{a}(s)ds=\frac{d_2}{4(\alpha^2+\omega^2)}\,e^{-\alpha|t|}\, \l(\frac{\sin(\omega|t|)}\omega+\frac{\cos(\omega t)}\alpha\r).
  \label{CovFn_eps4exle}
 \end{equation}
The formula \eqref{CovFn_eps4exle} for the covariance function of the process $\varepsilon$ corresponds to the formula (2.12) in \cite{LeoPap_CPoCTAPDnIoSS} for the correlation function
 \[
   \Corr\l(\varepsilon(t),\,\varepsilon(0)\r)=\frac{B(t)}{B(0)}= e^{-\alpha|t|}\,\l(\cos(\omega t)+\frac\alpha\omega\sin(\omega|t|)\r).
 \]

On the other hand for $\hat{a}(t)$  given by \eqref{hat_a_repr_exle}
 \[
   a(\lambda)=\int\limits_0^\infty\,e^{-i\lambda t}\hat{a}(t)dt=\frac1{\alpha^2+\omega^2-\lambda^2+2i\alpha\lambda}.
 \]
Then the positive spectral density of the stationary process $\varepsilon$ can be written as (compare with \cite{Parz_SP})
 \begin{equation}
   f_2(\lambda)=\frac{d_2}{2\pi}\l|a(\lambda)\r|^2=\frac{d_2}{2\pi}\cdot\frac1{\l(\lambda^2-\alpha^2-\omega^2\r)^2+4\alpha^2\lambda^2},\ \lambda\in\mathbb{R}.
  \label{spec_dens_eps_exle}
 \end{equation}

It is convenient to rewrite \eqref{spec_dens_eps_exle} in the form
 \begin{equation}
   f_2(\lambda)=f(\lambda,\,\theta)=\frac1{2\pi}\cdot\frac{\beta}{\l(\lambda^2-\alpha^2-\gamma^2\r)^2+4\alpha^2\lambda^2},\ \lambda\in\mathbb{R},
  \label{spec_dens_eps_repr_exle}
 \end{equation}
where $\alpha=\theta_1$ is a damping factor, $\beta=-\varkappa^{(2)}(0)=d_2(\theta_2)=\theta_2$, $\gamma=\omega=\theta_3$ is a damped cyclic frequency of the pendulum oscillations. Suppose that
 \[
   \theta=\l(\theta_1,\,\theta_2,\,\theta_3\r)=\l(\alpha,\,\beta,\,\gamma\r)\in\Theta=\l(\underline{\alpha},\,\overline{\alpha}\r)\times \l(\underline{\beta},\,\overline{\beta}\r)\times \l(\underline{\gamma},\,\overline{\gamma}\r),\ \underline{\alpha},\underline{\beta},\underline{\gamma}>0,\ \overline{\alpha},\overline{\beta},\overline{\gamma}<\infty.
 \]

The condition \textbf{C$_3$} is fulfilled for spectral density \eqref{spec_dens_eps_repr_exle}.

Assume that
 \[
   w(\lambda)=\l(1+\lambda^2\r)^{-a},\ \lambda\in\mathbb{R},\ a>0.
 \]
More precisely the value of $a$ will be chosen below.

Obviously the functions $w(\lambda)\log f(\lambda,\,\theta)$, $\frac{w(\lambda)}{f(\lambda,\,\theta)}$ are continuous on $\mathbb{R}\times\Theta^c$.  For any $\Lambda>0$ the function $\l|\log f(\lambda,\,\theta)\r|$ is bounded on the set $[-\Lambda,\,\Lambda]\times\Theta^c$. The number $\Lambda$ can be chosen so that for $\mathbb{R}\backslash[-\Lambda,\,\Lambda]$
 \[
   1<\frac{8\pi}{\overline{\beta}}\underline{\alpha}^2\lambda^2\le f^{-1}(\lambda,\,\theta) \le\frac{2\pi}{\underline{\beta}}\l(2\l(\lambda^4+\l(\overline{\alpha}^2+\overline{\gamma}^2\r)^2\r)+4\overline{\alpha}^2\lambda^2\r).
 \]
Thus the function $Z_1(\lambda)$ in the condition \textbf{C$_4$(i)} exists.

As for condition \textbf{C$_4$(ii)}, if $a\ge2$, then
 \[
   \sup\limits_{\lambda\in\mathbb{R},\,\theta\in\Theta^c}\,\frac{w(\lambda)}{f(\lambda,\,\theta)}<\infty.
 \]

As a function $v$ in condition \textbf{C$_5$} we take
 \[
   v(\lambda)=\l(1+\lambda^2\r)^{-b},\ \lambda\in\mathbb{R},\ b>0.
 \]

Obviously, if $a\ge b$, then $\sup\limits_{\lambda\in\mathbb{R}}\,\frac{w(\lambda)}{v(\lambda)}<\infty$ (condition \textbf{C$_5$(ii)}), and the function $\frac{v(\lambda)}{f(\lambda,\,\theta)}$ is uniformly continuous in $(\lambda,\,\theta)\in\mathbb{R}\times\Theta^c$, if $b\ge2$ (condition \textbf{C$_5$(i)}).

Further it will be helpful to use the notation $s(\lambda)=\l(\lambda^2-\alpha^2-\gamma^2\r)^2+4\alpha^2\lambda^2$. Then
 \begin{equation}
   \begin{aligned}
     f_\alpha(\lambda,\,\theta)&=\frac\partial{\partial\alpha}\,f(\lambda,\,\theta)=-\frac{2\alpha\beta}{\pi}\l(\lambda^2+\alpha^2+\gamma^2\r)s^{-2}(\lambda);\\
     f_\beta(\lambda,\,\theta)&=\frac\partial{\partial\beta}\,f(\lambda,\,\theta)=\l(2\pi s(\lambda)\r)^{-2};\\
     f_\gamma(\lambda,\,\theta)&=\frac\partial{\partial\gamma}\,f(\lambda,\,\theta)=\frac{2\beta\gamma}{\pi}\l(\lambda^2-\alpha^2-\gamma^2\r)s^{-2}(\lambda).
   \end{aligned}
  \label{drvtv_spec_dens4exle}
 \end{equation}

To check the condition \textbf{N$_4$(i)1)} consider the functions
 \begin{equation}
   \begin{aligned}
     \varphi_\alpha(\lambda)&=\frac{f_\alpha(\lambda,\,\theta)}{f^2(\lambda,\,\theta)}w(\lambda) =-\frac{4\pi\alpha}\beta\l(\lambda^2+\alpha^2+\gamma^2\r)w(\lambda);\\
     \varphi_\beta(\lambda)&=\frac{f_\beta(\lambda,\,\theta)}{f^2(\lambda,\,\theta)}w(\lambda)=\frac{2\pi}{\beta^2}s(\lambda)w(\lambda);\\
     \varphi_\gamma(\lambda)&=\frac{f_\gamma(\lambda,\,\theta)}{f^2(\lambda,\,\theta)}w(\lambda) =\frac{8\pi\gamma}\beta\l(\lambda^2-\alpha^2-\gamma^2\r)w(\lambda).
   \end{aligned}
  \label{def_phis4exle}
 \end{equation}
Then the condition \textbf{N$_4$(i)1)} is satisfied for $\varphi_\alpha$ and $\varphi_\gamma$ when $a>\frac32$, for $\varphi_\beta$ when $a>\frac52$. The same values of $a$ are sufficient also to meet the condition \textbf{N$_4$(i)2)}.

To verify \textbf{N$_4$(i)3)} fix $\theta\in\Theta^c$ and denote by $\varphi(\lambda)$, $\lambda\in\mathbb{R}$, any of the continuous functions $\varphi_\alpha(\lambda)$, $\varphi_\beta(\lambda)$, $\varphi_\gamma(\lambda)$, $\lambda\in\mathbb{R}$. Suppose $|1-\eta|<\delta<\frac12$. Then
 \[
    \sup\limits_{\lambda\in\mathbb{R}}\,\l|\varphi(\eta\lambda)-\varphi(\lambda)\r| =\max\l(\sup\limits_{\eta|\lambda|\le\Lambda}\,\l|\varphi(\eta\lambda)-\varphi(\lambda)\r|,\ \sup\limits_{\eta|\lambda|>\Lambda}\,\l|\varphi(\eta\lambda)-\varphi(\lambda)\r|\r)=\max\l(s_1,\,s_2\r),
 \]
 \[
   s_2\le\sup\limits_{|\lambda|>\Lambda}\,\l|\varphi(\lambda)\r|+\sup\limits_{\eta|\lambda|>\Lambda}\,\l|\varphi(\lambda)\r|= s_3+s_4.
 \]
By the properties of the functions $\varphi$ under assumption $a>\frac52$ for any $\varepsilon>0$ there exists $\Lambda=\Lambda(\varepsilon)>0$ such that for $|\lambda|>\frac23\Lambda$\ \ $|\varphi(\lambda)|<\frac\varepsilon2$. So, $s_3\le\frac\varepsilon2$. We have also $s_4\le\underset{|\lambda|>\frac23\Lambda}\sup\,|\varphi(\lambda)|\le\frac\varepsilon2$. On the other hand,
 \[
   s_1\le \sup\limits_{|\lambda|<2\Lambda}\,\l|\varphi(\eta\lambda)-\varphi(\lambda)\r|,\ \ |\eta\lambda-\lambda|\le2\Lambda\delta=\delta',
 \]
and by the proper choice of $\delta$
 \[
   s_1\le\sup\limits_{\substack{\lambda_1,\lambda_2\in[-2\Lambda,\,2\Lambda]\\ \l|\lambda_1-\lambda_2\r|<\delta'}}\, \l|\varphi(\lambda_1)-\varphi(\lambda_2)\r|<\varepsilon,
 \]
and condition \textbf{N$_4$(i)3)} is met.

Using \eqref{def_phis4exle} we get for any $\theta\in\Theta^c$, as $\lambda\to\infty$,
 \[
    \begin{aligned}
     \varphi_\alpha'(\lambda)&=-\frac{8\pi\alpha}\beta\,\lambda w(\lambda)-\frac{4\pi\alpha}\beta\l(\lambda^2+\alpha^2+\gamma^2\r)w'(\lambda) =O\l(\lambda^{-2a+1}\r);\\
     \varphi_\beta'(\lambda)&=\frac{2\pi}{\beta^2}\bigl(s'(\lambda)w(\lambda)+s(\lambda)w'(\lambda)\bigr)=O\l(\lambda^{-2a+3}\r);\\
     \varphi_\gamma'(\lambda)&=\frac{16\pi\gamma}\beta\,\lambda w(\lambda)+\frac{8\pi\gamma}\beta\l(\lambda^2-\alpha^2-\gamma^2\r)w'(\lambda) =O\l(\lambda^{-2a+1}\r).
   \end{aligned}
 \]
Therefore for $a>\frac32$ these derivatives are uniformly continuous on $\mathbb{R}$ (condition \textbf{N$_4$(i)4)}). So, to satisfy condition \textbf{N$_4$(i)} we can take weight function $w(\lambda)$ with $a>\frac52$.

The check of assumption \textbf{N$_4$(ii)} is similar to the check of \textbf{C$_4$(i)}.

As $\lambda\to\infty$, uniformly in $\theta\in\Theta^c$
 \begin{equation}
   \begin{aligned}
     \frac{\l|f_\alpha(\lambda,\,\theta)\r|}{f(\lambda,\,\theta)}w(\lambda)&=\l|\varphi_\alpha(\lambda)\r|f(\lambda,\,\theta)w(\lambda) =2\alpha\l(\lambda^2+\alpha^2+\gamma^2\r)s^{-1}(\lambda)w(\lambda)=O\l(\lambda^{-2a-2}\r);\\
     \frac{\l|f_\beta(\lambda,\,\theta)\r|}{f(\lambda,\,\theta)}w(\lambda)&=\varphi_\beta(\lambda)f(\lambda,\,\theta)w(\lambda) =\beta^{-1}w(\lambda)=O\l(\lambda^{-2a}\r);\\
     \frac{\l|f_\gamma(\lambda,\,\theta)\r|}{f(\lambda,\,\theta)}w(\lambda)&=\l|\varphi_\gamma(\lambda)\r|f(\lambda,\,\theta)w(\lambda) =4\gamma\l|\lambda^2-\alpha^2-\gamma^2\r|s^{-1}(\lambda)w(\lambda)=O\l(\lambda^{-2a-2}\r).
   \end{aligned}
  \label{ordr_phis_f_w4exle}
 \end{equation}
On the other hand, for any $\Lambda>0$ the functions \eqref{ordr_phis_f_w4exle} are bounded on the sets $[-\Lambda,\,\Lambda]\times\Theta^c$.

To check \textbf{N$_4$(iii)} note first of all that the functions uniformly in $\theta\in\Theta^c$, as $\lambda\to\infty$,
 \begin{equation}
   \begin{aligned}
     \frac{f_\alpha^2(\lambda,\,\theta)}{f^2(\lambda,\,\theta)}w(\lambda)&=\varphi_\alpha(\lambda)f(\lambda,\,\theta) =8\alpha^2\l(\lambda^2+\alpha^2+\gamma^2\r)^2s^{-2}(\lambda)w(\lambda)=O\l(\lambda^{-2a-4}\r);\\
     \frac{f_\beta^2(\lambda,\,\theta)}{f^2(\lambda,\,\theta)}w(\lambda)&=\varphi_\beta(\lambda)f(\lambda,\,\theta) =\beta^{-2}w(\lambda)=O\l(\lambda^{-2a}\r);\\
     \frac{f_\gamma^2(\lambda,\,\theta)}{f^2(\lambda,\,\theta)}w(\lambda)&=\varphi_\gamma(\lambda)f(\lambda,\,\theta) =16\gamma^2\l(\lambda^2-\alpha^2-\gamma^2\r)^2s^{-2}(\lambda)w(\lambda)=O\l(\lambda^{-2a-4}\r).
   \end{aligned}
  \label{ordr_phis^2/f^2w4exle}
 \end{equation}
These functions are continuous on $\mathbb{R}\times\Theta^c$, as well as the functions
 \begin{equation}
   \begin{aligned}
     \frac{f_\alpha(\lambda,\,\theta)f_\beta(\lambda,\,\theta)}{f^2(\lambda,\,\theta)}w(\lambda)&=\varphi_\alpha(\lambda)f_\beta(\lambda,\,\theta) =-\frac{2\alpha}\beta\l(\lambda^2+\alpha^2+\gamma^2\r)s^{-1}(\lambda)w(\lambda);\\
     \frac{f_\alpha(\lambda,\,\theta)f_\gamma(\lambda,\,\theta)}{f^2(\lambda,\,\theta)}w(\lambda)&=\varphi_\alpha(\lambda)f_\gamma(\lambda,\,\theta) =-8\alpha\gamma \l(\lambda^4-\l(\alpha^2+\gamma^2\r)^2\r)s^{-2}(\lambda)w(\lambda);\\
     \frac{f_\beta(\lambda,\,\theta)f_\gamma(\lambda,\,\theta)}{f^2(\lambda,\,\theta)}w(\lambda)&=\varphi_\beta(\lambda)f_\gamma(\lambda,\,\theta) =\frac{4\gamma}\beta\l(\lambda^2-\alpha^2-\gamma^2\r)s^{-1}(\lambda)w(\lambda).
   \end{aligned}
  \label{ordr_phi*phi/f^2w4exle}
 \end{equation}
Moreover, uniformly in $\theta\in\Theta^c$, as $\lambda\to\infty$,
 \begin{equation}
   \begin{aligned}
     \frac{f_{\alpha\alpha}(\lambda,\,\theta)}{f(\lambda,\,\theta)}w(\lambda)&=-4\l(\lambda^2+3\alpha^2+\gamma^2\r)s^{-1}(\lambda)w(\lambda) +8\alpha\l(\lambda^2+\alpha^2+\gamma^2\r)s^{-2}(\lambda)s_\alpha'(\lambda)w(\lambda) =O\l(\lambda^{-2a-2}\r);\\
     \frac{f_{\beta\beta}(\lambda,\,\theta)}{f(\lambda,\,\theta)}w(\lambda)&=0;\\
     \frac{f_{\gamma\gamma}(\lambda,\,\theta)}{f(\lambda,\,\theta)}w(\lambda)&=4\l(\lambda^2-\alpha^2-3\gamma^2\r)s^{-1}(\lambda)w(\lambda) -8\gamma\l(\lambda^2-\alpha^2-\gamma^2\r)s^{-2}(\lambda)s_\gamma'(\lambda)w(\lambda) =O\l(\lambda^{-2a-2}\r);\\
     \frac{f_{\alpha\beta}(\lambda,\,\theta)}{f(\lambda,\,\theta)}w(\lambda)&=-\frac{4\alpha}\beta\l(\lambda^2+\alpha^2+\gamma^2\r)s^{-1}(\lambda)w(\lambda) =O\l(\lambda^{-2a-2}\r);\\
     \frac{f_{\alpha\gamma}(\lambda,\,\theta)}{f(\lambda,\,\theta)}w(\lambda)&=-8\alpha\gamma s^{-1}(\lambda)w(\lambda)+ 16\alpha\gamma\l(\lambda^4-\l(\alpha^2+\gamma^2\r)^2\r)s^{-2}(\lambda)w(\lambda) =O\l(\lambda^{-2a-4}\r);\\
     \frac{f_{\beta\gamma}(\lambda,\,\theta)}{f(\lambda,\,\theta)}w(\lambda)&=\frac{4\gamma}\beta\l(\lambda^2-\alpha^2-\gamma^2\r)s^{-1}(\lambda)w(\lambda) =O\l(\lambda^{-2a-2}\r).
   \end{aligned}
  \label{ordr_2nd_der_f/f*w4exle}
 \end{equation}

Note that the functions \eqref{ordr_2nd_der_f/f*w4exle} are continuous on $\mathbb{R}\times\Theta^c$ as well as functions \eqref{ordr_phis^2/f^2w4exle} and \eqref{ordr_phi*phi/f^2w4exle}. Therefore the condition  \textbf{N$_4$(iii)} is fulfilled.

Let us verify the condition \textbf{N$_5$(i)}. According to equation \eqref{ordr_phis^2/f^2w4exle}, uniformly in $\theta\in\Theta^c$, as $\lambda\to\infty$,
 \begin{equation}
   \begin{aligned}
     \frac{f_\alpha^2(\lambda,\,\theta)}{f^3(\lambda,\,\theta)}w(\lambda)& =\frac{16\pi\alpha^2}\beta\l(\lambda^2+\alpha^2+\gamma^2\r)^2s^{-1}(\lambda)w(\lambda)=O\l(\lambda^{-2a}\r);\\
     \frac{f_\beta^2(\lambda,\,\theta)}{f^3(\lambda,\,\theta)}w(\lambda)& =\frac{2\pi}{\beta^3}s(\lambda)w(\lambda)=O\l(\lambda^{-2a+4}\r);\\
     \frac{f_\gamma^2(\lambda,\,\theta)}{f^3(\lambda,\,\theta)}w(\lambda)& =\frac{32\pi\gamma^2}\beta\l(\lambda^2-\alpha^2-\gamma^2\r)^2s^{-1}(\lambda)w(\lambda)=O\l(\lambda^{-2a}\r).
   \end{aligned}
  \label{ordr_phis^2/f^3w4exle}
 \end{equation}
Therefore the continuous in $(\lambda,\theta)\in\mathbb{R}\times\Theta^c$ functions \eqref{ordr_phis^2/f^3w4exle} are bounded in $(\lambda,\theta)\in\mathbb{R}\times\Theta^c$, if $a\ge2$.

Using equations \eqref{ordr_phi*phi/f^2w4exle} and \eqref{ordr_2nd_der_f/f*w4exle} we obtain uniformly in $\theta\in\Theta^c$, as $\lambda\to\infty$,
\begin{equation}
   \begin{aligned}
     \frac{f_{\alpha\alpha}(\lambda,\,\theta)}{f^2(\lambda,\,\theta)}w(\lambda)&=-\frac{8\pi}\beta\l(\lambda^2+3\alpha^2+\gamma^2\r)w(\lambda) +\frac{16\pi\alpha}\beta\l(\lambda^2+\alpha^2+\gamma^2\r)s^{-1}(\lambda)s_\alpha'(\lambda)w(\lambda) =O\l(\lambda^{-2a+2}\r);\\
     \frac{f_{\beta\beta}(\lambda,\,\theta)}{f^2(\lambda,\,\theta)}w(\lambda)&=0;\\
     \frac{f_{\gamma\gamma}(\lambda,\,\theta)}{f^2(\lambda,\,\theta)}w(\lambda)&=\frac{8\pi}\beta\l(\lambda^2-\alpha^2-3\gamma^2\r)w(\lambda) -\frac{16\pi\gamma}\beta\l(\lambda^2-\alpha^2-\gamma^2\r)s^{-1}(\lambda)s_\gamma'(\lambda)w(\lambda) =O\l(\lambda^{-2a+2}\r);\\
     \frac{f_{\alpha\beta}(\lambda,\,\theta)}{f^2(\lambda,\,\theta)}w(\lambda)&=-\frac{8\pi\alpha}{\beta^2} \l(\lambda^2+\alpha^2+\gamma^2\r)w(\lambda) =O\l(\lambda^{-2a+2}\r);\\
     \frac{f_{\alpha\gamma}(\lambda,\,\theta)}{f^2(\lambda,\,\theta)}w(\lambda)&=-\frac{16\alpha\gamma}\beta w(\lambda)+ \frac{32\pi\alpha\gamma}\beta\l(\lambda^4-\l(\alpha^2+\gamma^2\r)^2\r)s^{-1}(\lambda)w(\lambda) =O\l(\lambda^{-2a}\r);\\
     \frac{f_{\beta\gamma}(\lambda,\,\theta)}{f^2(\lambda,\,\theta)}w(\lambda)&=\frac{8\pi\gamma}{\beta^2}\l(\lambda^2-\alpha^2-\gamma^2\r)w(\lambda) =O\l(\lambda^{-2a+2}\r).
   \end{aligned}
  \label{ordr_2nd_der_f/f^2*w4exle}
 \end{equation}
So, continuous on  $\mathbb{R}\times\Theta^c$ functions \eqref{ordr_2nd_der_f/f^2*w4exle} are bounded in $(\lambda,\theta)\in\mathbb{R}\times\Theta^c$, if $a\ge1$.

To check \textbf{N$_5$(ii)} consider the weight function
 \[
   v(\lambda)=\l(1+\lambda^2\r)^{-b},\ \lambda\in\mathbb{R},\ b>0.
 \]
If $a\ge b$, then function $\frac{w(\lambda)}{v(\lambda)}$ is bounded on $\mathbb{R}$ (condition \textbf{N$_5$(iii)}). Using \eqref{ordr_phis^2/f^3w4exle} we obtain uniformly in $\theta\in\Theta^c$, as $\lambda\to\infty$,
 \begin{equation}
   \begin{aligned}
     \frac{f_\alpha^2(\lambda,\,\theta)}{f^3(\lambda,\,\theta)}v(\lambda)& =\frac{16\pi\alpha^2}\beta\l(\lambda^2+\alpha^2+\gamma^2\r)^2s^{-1}(\lambda)v(\lambda)=O\l(\lambda^{-2b}\r);\\
     \frac{f_\beta^2(\lambda,\,\theta)}{f^3(\lambda,\,\theta)}v(\lambda)& =\frac{2\pi}{\beta^3}s(\lambda)v(\lambda)=O\l(\lambda^{-2b+4}\r);\\
     \frac{f_\gamma^2(\lambda,\,\theta)}{f^3(\lambda,\,\theta)}v(\lambda)& =\frac{32\pi\gamma^2}\beta\l(\lambda^2-\alpha^2-\gamma^2\r)^2s^{-1}(\lambda)v(\lambda)=O\l(\lambda^{-2b}\r).
   \end{aligned}
  \label{ordr_der_f^2/f^3v4exle}
 \end{equation}

In turn, similarly to \eqref{ordr_phi*phi/f^2w4exle} it follows uniformly in $\theta\in\Theta^c$, as $\lambda\to\infty$,
 \begin{equation}
   \begin{aligned}
     \frac{f_\alpha(\lambda,\,\theta)f_\beta(\lambda,\,\theta)}{f^3(\lambda,\,\theta)}v(\lambda)& =-\frac{4\pi\alpha}{\beta^2}\l(\lambda^2+\alpha^2+\gamma^2\r)v(\lambda)=O\l(\lambda^{-2b+2}\r);\\
     \frac{f_\alpha(\lambda,\,\theta)f_\gamma(\lambda,\,\theta)}{f^3(\lambda,\,\theta)}v(\lambda)& =-\frac{16\alpha\gamma}\beta \l(\lambda^4-\l(\alpha^2+\gamma^2\r)^2\r)s^{-1}(\lambda)v(\lambda)=O\l(\lambda^{-2b}\r);\\
     \frac{f_\beta(\lambda,\,\theta)f_\gamma(\lambda,\,\theta)}{f^3(\lambda,\,\theta)}v(\lambda)& =\frac{8\pi\gamma}{\beta^2}\l(\lambda^2-\alpha^2-\gamma^2\r)v(\lambda)=O\l(\lambda^{-2b+2}\r).
   \end{aligned}
  \label{ordr_ders_f*f/f^3v4exle}
 \end{equation}

The functions \eqref{ordr_der_f^2/f^3v4exle} and \eqref{ordr_ders_f*f/f^3v4exle} will be uniformly continuous in $(\lambda,\theta)\in\mathbb{R}\times\Theta^c$, if they converge to zero, as $\lambda\to\infty$, uniformly in $\theta\in\Theta^c$, that is if $b>2$.

Similarly to \eqref{ordr_2nd_der_f/f^2*w4exle} uniformly in $\theta\in\Theta^c$, as $\lambda\to\infty$,
\begin{equation}
   \begin{aligned}
     \frac{f_{\alpha\alpha}(\lambda,\,\theta)}{f^2(\lambda,\,\theta)}v(\lambda)&=O\l(\lambda^{-2b+2}\r);\ \
     \frac{f_{\beta\beta}(\lambda,\,\theta)}{f^2(\lambda,\,\theta)}v(\lambda)&=&0;&\ \
     \frac{f_{\gamma\gamma}(\lambda,\,\theta)}{f^2(\lambda,\,\theta)}v(\lambda)&=O\l(\lambda^{-2b+2}\r);\\
     \frac{f_{\alpha\beta}(\lambda,\,\theta)}{f^2(\lambda,\,\theta)}v(\lambda)&=O\l(\lambda^{-2b+2}\r);\ \
     \frac{f_{\alpha\gamma}(\lambda,\,\theta)}{f^2(\lambda,\,\theta)}v(\lambda)&=&O\l(\lambda^{-2b}\r);&\ \
     \frac{f_{\beta\gamma}(\lambda,\,\theta)}{f^2(\lambda,\,\theta)}v(\lambda)& =O\l(\lambda^{-2b+2}\r).
   \end{aligned}
  \label{ordr_2nd_der_f/f^2*v4exle}
 \end{equation}
Thus the functions \eqref{ordr_der_f^2/f^3v4exle}--\eqref{ordr_2nd_der_f/f^2*v4exle} are uniformly continuous in $(\lambda,\theta)\in\mathbb{R}\times\Theta^c$, if $b>2$.

Proceeding to the verification of condition \textbf{N$_6$}, we note that for any $x=\l(x_\alpha,\,x_\beta,\,x_\gamma\r)\ne0$
 \[
   \l<W_1(\theta)x,\,x\r>=\int\limits_{\mathbb{R}}\,\l(x_\alpha f_\alpha(\lambda,\,\theta)+x_\beta f_\beta(\lambda,\,\theta) +x_\gamma f_\gamma(\lambda,\,\theta)\r)\frac{w(\lambda)}{f^2(\lambda,\,\theta)}d\lambda.
 \]
From equation \eqref{drvtv_spec_dens4exle} it is seen that the positive definiteness of the matrix $W_1(\lambda)$ follows from linear independence of the functions $\lambda^2+\alpha^2+\gamma^2$, $s(\lambda)$, $\lambda^2-\alpha^2-\gamma^2$. Positive definiteness of the matrix $W_2(\theta)$ is established similarly.

In our example to satisfy the consistency conditions \textbf{C$_4$} and \textbf{C$_5$} the weight functions $w(\lambda)$ and $v(\lambda)$ should be chosen so that $a\ge b>2$. On the other hand to satisfy the asymptotic normality conditions \textbf{N$_4$} and \textbf{N$_5$} the functions $w(\lambda)$ and $v(\lambda)$ should be such that $a>\frac52$ and $a\ge b>2$.

The spectral density \eqref{spec_dens_eps_repr_exle} has no singularity at zero, so that the functions $v(\lambda)$ in the conditions \textbf{C$_5$(i)} and \textbf{N$_5$(ii)} could be chosen to be equal to $w(\lambda)$, for example, $a=b=3$. However we prefer to keep in the text the function $v(\lambda)$, since it is needed when the spectral density could have a singularity at zero or elsewhere, see, e.g., Example 1~\cite{LeoSa_oWE4SCoCPRP}, where linear process driven by the Brownian motion and regression function $g(t,\,\alpha)\equiv0$ have been studied. Specifically in the case of Riesz-Bessel spectral density
 \begin{equation}
    f(\lambda,\,\theta)=\frac{\beta}{2\pi|\lambda|^{2\alpha}(1+\lambda^2)^\gamma},\ \lambda\in\mathbb{R},
 \label{Rie_Bess_sp_fn}
 \end{equation}
where $\theta=\l(\theta_1,\,\theta_2,\,\theta_3\r)=(\alpha,\,\beta,\,\gamma)\in\Theta=(\underline{\alpha},\,\overline{\alpha}) \times(\underline{\beta},\,\overline{\beta})\times(\underline{\gamma},\,\overline{\gamma})$, $\underline{\alpha}>0$, $\overline{\alpha}<\frac12$, $\underline{\beta}>0$, $\overline{\beta}<\infty$, $\underline{\gamma}>\frac12$, $\overline{\gamma}<\infty$, and the parameter $\alpha$ signifies the long range dependence, while the parameter $\gamma$ indicates the second-order intermittency~\cite{AnhLeoSa_oCoMCE4FSP,Gaoetal_PEoSPwLRD&I,LimTeo_SPPoFRBFoVO}, the weight functions have been chosen in the form
 \[
   w(\lambda)=\frac{\lambda^{2b}}{\l(1+\lambda^2\r)^a},\ a>b>0;\ \ \ v(\lambda)=\frac{\lambda^{2b'}}{\l(1+\lambda^2\r)^{a'}},\ a'>b'>0,\ \lambda\in\mathbb{R}.
 \]

Unfortunately, our conditions do not cover so far the case of the general non-linear regression function and Lévy driven continuous-time strongly dependent linear random noise such as Riesz-Bessel motion.

\begin{appendices}
\numberwithin{lemma}{section}
\numberwithin{theorem}{section}
\numberwithin{definition}{section}
\numberwithin{example}{section}
\section{LSE consistency}\label{app_LSE_cons}

$\indent$Some results on consistency of the LSE $\widehat{\alpha}_T$ in the observation model of the type \eqref{c_n_reg_m} with stationary noise $\varepsilon(t)$, $t\in\mathbb{R}$, were obtained, for example, in Ivanov and Leonenko~\cite{IvLeo_SAoRF_En,IvLeo_AToNRwLRD,IvLeo_REiNRMwLRD,IvLeo_SAoLRDiNR}, Ivanov~\cite{Iv_aSotPoDHP,Iv_CoLSEoAaAFoSoHO}, Ivanov et al.~\cite{IvLeRuMeZhu_EoHCiRwCDE} to mention several of the relevant works. In this section we formulate a generalization of Malinvaud theorem~\cite{Mlvd_tCoNR} on $\widehat{\alpha}_T$ consistency for linear stochastic process \eqref{LinRep_RmdnNse} and consider an example of nonlinear regression function $g(t,\,\alpha)$ satisfying the conditions of this theorem and conditions \textbf{C$_1$}, \textbf{C$_2$}. Then we consider another possibilities of \textbf{C$_1$} and \textbf{C$_2$} fulfillment.

Set
 \[
  \begin{aligned}
   w_T(\alpha_1,\,\alpha_2)&=\int\limits_0^T\,\varepsilon(t)\l(g(t,\,\alpha_1)-g(t,\,\alpha_2)\r)dt,\ \alpha_1,\alpha_2\in\mathcal{A}^c,\\
   \Psi_T(u_1,\,u_2)&=\Phi_T\l(\alpha_0+T^{\frac12}d_T^{-1}(\alpha_0)u_1,\ \alpha_0+T^{\frac12}d_T^{-1}(\alpha_0)u_1\r).
  \end{aligned}
 \]

For any fixed $\alpha_0\in\mathcal{A}$, the function\ \ $\Psi_T(u_1,\,u_2)$\ \ is defined on the set $U_T(\alpha_0)\times U_T(\alpha_0)$, $U_T(\alpha_0)=T^{-\frac12}d_T(\alpha_0)\l(\mathcal{A}^c-\alpha_0\r)$.

Assume the following.

\textbf{1)} For any $\varepsilon>0$ and $R>0$ there exists $\delta=\delta(\varepsilon,\,R)$ such that
 \begin{equation}
    \sup\limits_{\substack{u_1,u_2\in U_T(\alpha_0)\cap v^c(R)\\ \|u_1-u_2\|\le\delta}}\,T^{-1}\Psi_T(u_1,\,u_2)\le\varepsilon.
  \label{sup_T^-1Psi_T<=eps}
 \end{equation}

\textbf{2)} For some $R_0>0$ and any $\rho\in(0,\,R_0)$ there exist numbers $a=a(R_0)>0$ and $b=b(\rho,\,R_0)$ such that
 \begin{equation}
    \inf\limits_{u\in U_T(\alpha_0)\cap\l(v^c(R_0)\backslash v(\rho)\r)}\, T^{-1}\Psi(u,\,0)\ge b;
  \label{inf_T^-1Psi_T>=b}
 \end{equation}
 \begin{equation}
    \inf\limits_{u\in U_T(\alpha_0)\backslash v^c(R_0)}\, T^{-1}\Psi(u,\,0)\ge 4B(0)+a.
  \label{inf_T^-1Psi_T>=4B0+a}
 \end{equation}

It was proven in Lemma \ref{lema_int_eps^2} that under condition \textbf{A$_1$}
 \begin{equation}
     \ExpV\l(\nu_T^*-B(0)\r)^2=O\l(T^{-1}\r).
  \label{ordrE_nu_T-B0}
 \end{equation}

\begin{lemma}
 Under condition \textbf{A$_1$},
  \begin{equation}
      \ExpV w_T^4(\alpha_1,\,\alpha_2)\le c \Phi_T^2(\alpha_1,\,\alpha_2),\ \alpha_1,\alpha_2\in\mathcal{A}^c.
   \label{Ew^4<=cPhi_T^2}
  \end{equation}
\end{lemma}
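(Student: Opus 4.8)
The plan is to expand $\ExpV w_T^4(\alpha_1,\alpha_2)$ by means of the fourth moment identity \eqref{4ordr_mmnt_fn} and to bound each of the four resulting integrals by a constant multiple of $\Phi_T^2(\alpha_1,\alpha_2)$. Write $h(t)=g(t,\alpha_1)-g(t,\alpha_2)$; since $g$ is continuous, $h$ is bounded on $[0,T]$, so $h\mathbb{I}_{[0,T]}\in L_1(\mathbb{R})\cap L_2(\mathbb{R})$ with $\bigl\|h\mathbb{I}_{[0,T]}\bigr\|_2^2=\Phi_T(\alpha_1,\alpha_2)$. As $\varepsilon$ is stationary with finite moments of all orders under \textbf{A$_1$} and $h$ is bounded on $[0,T]$, Fubini's theorem gives
\[
  \ExpV w_T^4(\alpha_1,\alpha_2)=\int\limits_{[0,T]^4}m_4(t_1,t_2,t_3,t_4)\,h(t_1)h(t_2)h(t_3)h(t_4)\,dt_1dt_2dt_3dt_4 .
\]

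First I would handle the three product-of-covariance contributions in \eqref{4ordr_mmnt_fn}. In each of them the four variables separate into two pairs, so after integration the term equals $\bigl(\int_0^T\int_0^T B(t-s)h(t)h(s)\,dt\,ds\bigr)^2=\bigl(\ExpV w_T^2(\alpha_1,\alpha_2)\bigr)^2$. The covariance function $B$ is even and, by Tonelli, $\|B\|_1\le d_2\|\hat a\|_1^2<\infty$; hence, writing the inner double integral as $\int_{\mathbb{R}}\bigl(B*(h\mathbb{I}_{[0,T]})\bigr)(s)\,h(s)\mathbb{I}_{[0,T]}(s)\,ds$ and applying the Cauchy--Schwarz and Young convolution inequalities,
\[
  \Bigl|\int_0^T\!\!\int_0^T B(t-s)h(t)h(s)\,dt\,ds\Bigr|\le \bigl\|B*(h\mathbb{I}_{[0,T]})\bigr\|_2\,\bigl\|h\mathbb{I}_{[0,T]}\bigr\|_2\le\|B\|_1\,\Phi_T(\alpha_1,\alpha_2)\le d_2\|\hat a\|_1^2\,\Phi_T(\alpha_1,\alpha_2),
\]
so each of these three terms is at most $d_2^2\|\hat a\|_1^4\,\Phi_T^2(\alpha_1,\alpha_2)$.

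Next I would treat the cumulant term. By \eqref{rep_c_r_by_hat-a} with $r=4$ one has $c_4(t_1,t_2,t_3,t_4)=d_4\int_{\mathbb{R}}\prod_{j=1}^4\hat a(t_j-s)\,ds$, so this contribution equals $d_4\int_{\mathbb{R}}H^4(s)\,ds$, where $H(s)=\int_0^T\hat a(t-s)h(t)\,dt=\bigl(\hat a(-\cdot)*(h\mathbb{I}_{[0,T]})\bigr)(s)$. Young's inequality yields $\|H\|_\infty\le\|\hat a\|_2\,\bigl\|h\mathbb{I}_{[0,T]}\bigr\|_2$ and $\|H\|_2\le\|\hat a\|_1\,\bigl\|h\mathbb{I}_{[0,T]}\bigr\|_2$, whence
\[
  \int\limits_{\mathbb{R}}H^4(s)\,ds\le\|H\|_\infty^2\,\|H\|_2^2\le\|\hat a\|_1^2\|\hat a\|_2^2\,\Phi_T^2(\alpha_1,\alpha_2),
\]
and $d_4=\ExpV L^4(1)-3\bigl(\ExpV L^2(1)\bigr)^2$ is finite under \textbf{A$_1$} (this is already used in the proof of Lemma~\ref{lema_int_eps^2}). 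Adding the four estimates proves \eqref{Ew^4<=cPhi_T^2} with $c=|d_4|\,\|\hat a\|_1^2\|\hat a\|_2^2+3d_2^2\|\hat a\|_1^4$.

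The one point requiring a little care is the cumulant term: one must recognise the integral against $c_4$ as $d_4\|H\|_4^4$ and then split $\|H\|_4^4\le\|H\|_\infty^2\|H\|_2^2$, which is exactly where both halves of \textbf{A$_1$} — $\hat a\in L_1(\mathbb{R})$ (for $\|H\|_2$) and $\hat a\in L_2(\mathbb{R})$ (for $\|H\|_\infty$) — are used; the covariance terms require only $\hat a\in L_1(\mathbb{R})$. Everything else reduces to Young's convolution inequality and the finiteness of the second and fourth cumulants of $L(1)$.
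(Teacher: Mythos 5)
Your proof is correct and follows essentially the same route as the paper: the same decomposition of $\ExpV w_T^4$ via \eqref{4ordr_mmnt_fn} into the fourth-cumulant term plus three covariance pairings, with the cumulant handled through \eqref{rep_c_r_by_hat-a} and bounds in terms of $\l\|\hat{a}\r\|_1$ and $\l\|\hat{a}\r\|_2$, yielding exactly the paper's constant $c=d_4\l\|\hat{a}\r\|_1^2\l\|\hat{a}\r\|_2^2+3d_2^2\l\|\hat{a}\r\|_1^4$. The only difference is cosmetic: you package the estimates as Young's convolution inequality and the interpolation $\|H\|_4^4\le\|H\|_\infty^2\|H\|_2^2$, whereas the paper obtains the same bounds by elementary pointwise inequalities of the type $2|xy|\le x^2+y^2$.
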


\begin{proof}
 By formula \eqref{4ordr_mmnt_fn}
  \[
    \begin{aligned}
       \ExpV& w_T^4(\alpha_1,\,\alpha_2)=\int\limits_{[0,T]^4}\,c_4(t_1,\,t_2,\,t_3,\,t_4)\prod\limits_{i=1}^4\, \l(g(t_i,\,\alpha_1)-g(t_i,\,\alpha_2)\r)dt_1dt_2dt_3dt_4+\\
       &+3\l(\int\limits_0^T\int\limits_0^T\,B(t_1-t_2)\l(g(t_1,\,\alpha_1)-g(t_1,\,\alpha_2)\r) \l(g(t_2,\,\alpha_1)-g(t_2,\,\alpha_2)\r)dt_1dt_2\r)^2=I_7+3I_8^2.
    \end{aligned}
  \]

 By condition \textbf{A$_1$} and Fubini-Tonelli theorem
  \[
    \begin{aligned}
       |I_8|&\le\frac12\int\limits_0^T\int\limits_0^T\,|B(t_1-t_2)|\l[\l(g(t_1,\,\alpha_1)-g(t_1,\,\alpha_2)\r)^2 +\l(g(t_2,\,\alpha_1)-g(t_2,\,\alpha_2)\r)^2\r]dt_1dt_2\le
       d_2\l\|\hat{a}\r\|_1^2,
    \end{aligned}
  \]
 $\l\|\hat{a}\r\|_1=\int\limits_{\mathbb{R}}\,|\hat{a}(t)|dt$.

 On the other hand by formula \eqref{rep_c_r_by_hat-a}
  \[
     \begin{aligned}
       |I_7|&\le d_4\int\limits_{\mathbb{R}}\,ds\int\limits_{[0,T]^4}\,\prod\limits_{i=1}^4\, \Bigl|\hat{a}(t_i-s)\l(g(t_i,\,\alpha_1)-g(t_i,\,\alpha_2)\r)\Bigr|dt_1dt_2dt_3dt_4\le\\
       &\le\frac12 d_4\int\limits_{\mathbb{R}}\,ds\int\limits_{[0,T]^4}\,\prod\limits_{i=1}^4\, \l|\hat{a}(t_i-s)\r|\Bigl[\bigl(g(t_1,\,\alpha_1)-g(t_1,\,\alpha_2)\bigr)^2 \bigl(g(t_2,\,\alpha_1)-g(t_2,\,\alpha_2)\bigr)^2+\Bigr.\\
       &\hspace*{27mm}\Bigl.+\bigl(g(t_3,\,\alpha_1)-g(t_3,\,\alpha_2)\bigr)^2 \bigl(g(t_4,\,\alpha_1)-g(t_4,\,\alpha_2)\bigr)^2\Bigr]dt_1dt_2dt_3dt_4=I_7^{(1)}+I_7^{(2)};
     \end{aligned}
  \]
  \[
    \begin{aligned}
      I_7^{(1)}=&\frac12 d_4\int\limits_{\mathbb{R}}\,ds\int\limits_0^T\int\limits_0^T\,\, \bigl|\hat{a}(t_1-s)\hat{a}(t_2-s)\bigr|\bigl(g(t_1,\,\alpha_1)-g(t_1,\,\alpha_2)\bigr)^2 \bigl(g(t_2,\,\alpha_1)-g(t_2,\,\alpha_2)\bigr)^2dt_1dt_2\ \cdot\\
      &\hspace*{87mm}\cdot \int\limits_0^T\int\limits_0^T\,\, \bigl|\hat{a}(t_3-s)\hat{a}(t_4-s)\bigr|dt_3dt_4\le\\
      \le&\frac14 d_4\l\|\hat{a}\r\|_1^2\int\limits_0^T\int\limits_0^T\,\, \bigl(g(t_1,\,\alpha_1)-g(t_1,\,\alpha_2)\bigr)^2 \bigl(g(t_2,\,\alpha_1)-g(t_2,\,\alpha_2)\bigr)^2\ \cdot\\
      &\hspace*{72mm}\cdot \l(\int\limits_{\mathbb{R}}\,\Bigl[\hat{a}^2(t_1-s)+\hat{a}^2(t_2-s)\Bigr]ds\r)dt_1dt_2\le\\
      \le& \frac12 d_4\l\|\hat{a}\r\|_1^2\Phi_T^2(\alpha_1,\,\alpha_2).\hspace*{96mm}
    \end{aligned}
  \]

 For integral $I_7^{(2)}$ we get the same bound. So, we obtain inequality \eqref{Ew^4<=cPhi_T^2} with
  \[
    c=d_4\l\|\hat{a}\r\|_1^2\l\|\hat{a}\r\|_2^2+3d_2^2\l\|\hat{a}\r\|_1^4.
  \]
\end{proof}

\begin{theorem}
  If assumptions \textbf{1), 2),} and \textbf{A$_1$} are valid then for any $\rho>0$
  \[
     \Prob\l\{\l\|T^{-\frac12}d_T(\alpha_0)\l(\widehat{\alpha}_T-\alpha_0\r)\r\|\ge\rho\r\}=O(T^{-1}),\ \ \text{as}\ \ T\rightarrow\infty.
  \]
\end{theorem}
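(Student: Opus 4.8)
\emph{Proof idea.} The plan is to combine the defining minimization property of the LSE with the fourth--moment bound \eqref{Ew^4<=cPhi_T^2} and the $L_2$--rate \eqref{ordrE_nu_T-B0}. Since $X(t)=g(t,\alpha_0)+\varepsilon(t)$, expanding the square gives, for every $\alpha\in\mathcal{A}^c$,
\[
 S_T(\alpha)-S_T(\alpha_0)=\Phi_T(\alpha_0,\alpha)-2w_T(\alpha,\alpha_0);
\]
hence $S_T(\widehat\alpha_T)\le S_T(\alpha_0)$ and the Cauchy--Schwarz inequality imply
\[
 \Phi_T(\alpha_0,\widehat\alpha_T)\le 2w_T(\widehat\alpha_T,\alpha_0)\le 2\left(T\nu_T^*\right)^{1/2}\Phi_T^{1/2}(\alpha_0,\widehat\alpha_T),\qquad\text{so}\qquad T^{-1}\Phi_T(\alpha_0,\widehat\alpha_T)\le 4\nu_T^*.
\]
Put $\widehat u_T=T^{-1/2}d_T(\alpha_0)(\widehat\alpha_T-\alpha_0)$, so that $\Phi_T(\alpha_0,\widehat\alpha_T)=\Psi_T(\widehat u_T,0)$. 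It suffices to treat $\rho\in(0,R_0)$ (for larger $\rho$ replace it by $\min(\rho,R_0/2)$), and I split $\{\|\widehat u_T\|\ge\rho\}\subseteq\{\|\widehat u_T\|>R_0\}\cup\{\rho\le\|\widehat u_T\|\le R_0\}$. Throughout one works on $G_T=\{\nu_T^*\le B(0)+1\}$, for which $\Prob(G_T^{c})=O(T^{-1})$ by Chebyshev's inequality and \eqref{ordrE_nu_T-B0}.

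\emph{The part $\{\|\widehat u_T\|>R_0\}$.} On this event $\widehat u_T\in U_T(\alpha_0)\setminus v^c(R_0)$, so assumption \textbf{2)} in the form \eqref{inf_T^-1Psi_T>=4B0+a} gives $4B(0)+a\le T^{-1}\Psi_T(\widehat u_T,0)\le 4\nu_T^*$, whence $\nu_T^*-B(0)\ge a/4$. Therefore $\{\|\widehat u_T\|>R_0\}\subseteq\{|\nu_T^*-B(0)|\ge a/4\}$, which has probability $O(T^{-1})$ by \eqref{ordrE_nu_T-B0} and Chebyshev.

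\emph{The annular part.} This is a discretization argument. I would first fix $\varepsilon>0$ small enough, depending only on $B(0)$ and on $b=b(\rho,R_0)>0$ from \eqref{inf_T^-1Psi_T>=b}; then take $\delta=\delta(\varepsilon,R_0+1)$ as in assumption \textbf{1)} (equation \eqref{sup_T^-1Psi_T<=eps}); and then a $\delta$--net $\{u_j\}_{j=1}^{N_\delta}$ of $U_T(\alpha_0)\cap v^c(R_0+1)$ whose cardinality $N_\delta$ is independent of $T$ (obtained by refining a fixed $\tfrac12\delta$--net of $v^c(R_0+1)$). On $G_T\cap\{\rho\le\|\widehat u_T\|\le R_0\}$ choose $u_j$ with $\|\widehat u_T-u_j\|\le\delta$ and set $\alpha_j=\alpha_0+T^{1/2}d_T^{-1}(\alpha_0)u_j$. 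Then $\Phi_T(\widehat\alpha_T,\alpha_j)=\Psi_T(\widehat u_T,u_j)\le T\varepsilon$ by \eqref{sup_T^-1Psi_T<=eps}; together with the a priori bound $T^{-1}\Phi_T(\alpha_0,\widehat\alpha_T)\le 4(B(0)+1)$ on $G_T$, Cauchy--Schwarz, and the triangle inequality for the $L_2$--pseudometric $\Phi_T^{1/2}$, the quantities $2|w_T(\widehat\alpha_T,\alpha_j)|$ and $|\Phi_T(\alpha_0,\widehat\alpha_T)-\Phi_T(\alpha_0,\alpha_j)|$ are each bounded by $c_\ast T\sqrt{\varepsilon}$ for a constant $c_\ast$ depending only on $B(0)$. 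Writing $w_T(\widehat\alpha_T,\alpha_0)=w_T(\widehat\alpha_T,\alpha_j)+w_T(\alpha_j,\alpha_0)$ in $\Phi_T(\alpha_0,\widehat\alpha_T)\le 2w_T(\widehat\alpha_T,\alpha_0)$, and noting that \eqref{inf_T^-1Psi_T>=b} gives $\Phi_T(\alpha_0,\widehat\alpha_T)\ge Tb$ and hence $\Phi_T(\alpha_0,\alpha_j)\ge Tb/4$ by the triangle inequality, the smallness of $\varepsilon$ yields $w_T(\alpha_j,\alpha_0)\ge\tfrac14\Phi_T(\alpha_0,\alpha_j)$. Consequently, by \eqref{Ew^4<=cPhi_T^2}, Chebyshev, the bound $\Phi_T(\alpha_0,\alpha_j)\ge Tb/4$, and $N_\delta$ being fixed,
\[
 \Prob\Bigl(G_T\cap\{\rho\le\|\widehat u_T\|\le R_0\}\Bigr)\le\sum_{j=1}^{N_\delta}\Prob\Bigl\{w_T(\alpha_j,\alpha_0)\ge\tfrac14\Phi_T(\alpha_0,\alpha_j)\Bigr\}\le\sum_{j=1}^{N_\delta}\frac{256\,c}{\Phi_T^{2}(\alpha_0,\alpha_j)}=O(T^{-2}).
\]
Adding the three contributions gives $\Prob\{\|\widehat u_T\|\ge\rho\}=O(T^{-1})$, as required.

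\emph{Main obstacle.} The delicate point is the annular estimate: the quantifiers must be fixed in the order $\varepsilon\to\delta=\delta(\varepsilon,\cdot)\to\text{net}$, and one needs a $T$--uniform a priori bound on $T^{-1}\Phi_T(\alpha_0,\widehat\alpha_T)$ (here $\le 4(B(0)+1)$ on $G_T$) so that the oscillation incurred in replacing the random minimizer $\widehat u_T$ by a deterministic net point $u_j$ can be absorbed into a fixed fraction of $\Phi_T(\alpha_0,\alpha_j)$; only after this reduction does the fourth--moment inequality \eqref{Ew^4<=cPhi_T^2} applied at the $N_\delta$ net points deliver the $O(T^{-2})$ bound, which is in particular $O(T^{-1})$. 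The unbounded part is routine once \eqref{ordrE_nu_T-B0} is in hand.
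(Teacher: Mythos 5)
Your argument is correct and is essentially the proof the paper intends: the paper only refers to the Malinvaud-type proof of Theorem 3.2.1 in Ivanov and Leonenko, built on the moment bounds \eqref{ordrE_nu_T-B0} and \eqref{Ew^4<=cPhi_T^2} together with assumptions \textbf{1)} and \textbf{2)}, which is exactly your scheme (the a priori bound $T^{-1}\Phi_T(\alpha_0,\widehat{\alpha}_T)\le 4\nu_T^*$ against \eqref{inf_T^-1Psi_T>=4B0+a} for the exterior of $v^c(R_0)$, and a $\delta$-net with the fourth-moment Chebyshev bound on the annulus). Your write-up supplies the details the paper omits, with the quantifier order and the $T$-uniform net cardinality handled correctly.
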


\begin{prfapthm}
  The proof of this Malinvaud theorem generalization is similar to the proof of Theorem 3.2.1. in \cite{IvLeo_SAoRF_En} and uses the relations \eqref{ordrE_nu_T-B0} and \eqref{Ew^4<=cPhi_T^2}.
\end{prfapthm}

Instead of \textbf{C$_2$} consider the stronger condition.

\textbf{C$_2'$}. There exist positive constants $c_0,\,c_1<\infty$ such that for any $\alpha\in\mathcal{A}^c$ and $T>T_0$
  \begin{equation}
     c_1\bigl\|d_T(\alpha)\l(\alpha_1-\alpha_2\r)\bigr\|^2\ \le\ \Phi_T(\alpha_1,\,\alpha_2)\ \le\  c_0\bigl\|d_T(\alpha)\l(\alpha_1-\alpha_2\r)\bigr\|^2,\ \alpha_1,\alpha_2\in\mathcal{A}^c.
   \label{dbnd4Phi_T_w_d_app}
  \end{equation}

Point out a sufficient condition for \textbf{C$_2'$} fulfillment. Introduce a diagonal matrix
 \[
   s_T=\diag\Bigl(s_{iT},\ i=\overline{1,q}\Bigr),\ s_{iT}\to\infty,\ \text{as}\ T\rightarrow\infty,\ i=\overline{1,q}.
 \]

\textbf{C$_2''$. (i)} There exist positive constants $\underline{c}_i,\ \overline{c}_i$, $i=\overline{1,q}$, such that for $T>T_0$ uniformly in $\alpha\in\mathcal{A}$
 \begin{equation}
     \underline{c}_i<s_{iT}^{-1}d_{iT}(\alpha)<\overline{c}_i,\ i=\overline{1,q}.
  \label{dbnd4s_iT&d_iT_app}
 \end{equation}
\hspace*{12mm}\textbf{(ii)} For some numbers $c_0^*.\ c_1^*$ and $T>T_0$,
 \[
    c_0^*\,\bigl\|s_T\l(\alpha_1-\alpha_2\r)\bigr\|^2\le \Phi_T(\alpha_1,\,\alpha_2)\le c_0^*\,\bigl\|s_T\l(\alpha_1-\alpha_2\r)\bigr\|^2,\ \alpha_1,\alpha_2\in\mathcal{A}^c.
 \]

Under condition \textbf{C$_2''$} as it is easily seen one can take in \textbf{C$_2'$}
 \[
   c_0=c_0^*\l(\min\limits_{1\le i \le q}\,\underline{c}_i\r)^{-1},\ \ \ c_1=c_1^*\l(\max\limits_{1\le i \le q}\,\overline{c}_i\r)^{-1}.
 \]

The next example demonstrates the fulfillment of the condition \textbf{C$_2'$} (compare with Ivanov and Orlovskyi~\cite{IvOr_LDoRPEiCTMwsGN}).

\begin{example}
 Let
  \[
    g(t,\,\alpha)=\exp\l\{\l<\alpha,\,y(t)\r>\r\},
  \]
 with $\l<\alpha,\,y(t)\right>=\sum\limits_{i=1}^q\,\alpha_iy_i(t)$, regressors $y(t)=\Bigl(y_1(t),\,\ldots,\,y_q(t)\Bigr)'$, $t\ge0$, take values in a compact set $Y\subset\mathbb{R}^q$. Suppose
  \[
     J_T=\l(T^{-1}\int\limits_0^Ty_i(t)y_j(t)dt\r)_{i,j=1}^q\ \to\ J=\l(J_{ij}\r)_{i,j=1}^q,\ \ \text{as}\ \ T\to\infty,
  \]
 where $J$ is a positive definite matrix, and the set $\mathcal{A}$ in the model \eqref{c_n_reg_m} is bounded. Set
  \[
    M=\underset{\alpha\in\mathcal{A}^c,\,y\in Y}\max\,\exp\l\{\l<\alpha,\,y\r>\r\},\ L=\underset{\alpha\in\mathcal{A}^c,\, y\in Y}\min\,\exp\l\{\l<\alpha,\,y\r>\r\}.
  \]
 Then for any $\delta>0$ and $T>T_0$
  \[
     L^2\l(J_{ii}-\delta\r)< T^{-1} d_{iT}^2(\alpha)< M^2\l(J_{ii}+\delta\r),\ i=\overline{1,q},
  \]
 and condition \textbf{C$_2''$(i)} is fulfilled with matrix $s_T=T^{\frac12}\mathbb{I}_q$, $\mathbb{I}_q$ is identity matrix of order $q$, and $\underline{c}_i=L^2\l(J_{ii}-\delta\r)$, $\overline{c}_i=M^2\l(J_{ii}+\delta\r)$, $i=\overline{1,q}$.

 Let us check the condition \textbf{C$_2''$(ii)}. We have
  \[
     e^{\l<\alpha_1,\,y(t)\r>}-e^{\l<\alpha_2,\,y(t)\r>} =e^{\l<\alpha_2,\,y(t)\r>}\l(e^{\l<\alpha_1-\alpha_2,\,y(t)\r>}-1\r).
  \]
 As far as $\left(e^x-1\right)^2\ge x^2$, $x\ge0$, and $\left(e^x-1\right)^2\ge e^{2x}x^2$, $x<0$, then
  \[
    \l(e^{\l<\alpha_1-\alpha_2,\,y(t)\r>}-1\r)^2\ge\Delta\,\l<\alpha_1-\alpha_2,\,y(t)\r>^2,\ \ \Delta=\min\l\{1,\ e^{2\l<\alpha_1-\alpha_2,\,y(t)\r>}\r\}.
  \]
 Thus
  \[
    e^{2\l<\alpha_2,\,y(t)\r>}\l(e^{\l<\alpha_1-\alpha_2,\,y(t)\r>}-1\r)^2\ge e^{2\l<\alpha_2,\,y(t)\r>}\Delta\,\l<\alpha_1-\alpha_2,\,y(t)\r>^2\ge L^2\l<\alpha_1-\alpha_2,\,y(t)\r>^2,
  \]
 and for any $\delta>0$ and $T>T_0$
  \[
    \Phi_T(\alpha_1,\,\alpha_2)\ge L_2\sum\limits_{i,j=1}^q\,J_{ij,T}\l(T^{\frac12}(\alpha_{i1}-\alpha_{i2})\r) \l(T^{\frac12}(\alpha_{j1}-\alpha_{j2})\r)\ge L^2\bigl(\lambda_{\min}(J)-\delta\bigr) \l\|T^{\frac12}(\alpha_1-\alpha_2)\r\|^2,
  \]
 where $\lambda_{\min}(J)$ is the least eigenvalue of the matrix $J$.

 On the other hand,
  \[
    \l|e^{\l<\alpha_1,\,y(t)\r>}-e^{\l<\alpha_2,\,y(t)\r>}\r|=\l|\sum\limits_{i=1}^q\,y_i(t) e^{\l<y(t),\,\alpha_1+\eta(t)(\alpha_2-\alpha_1)\r>}(\alpha_{i1}-\alpha_{i2})\r|\le M\l|\sum\limits_{i=1}^q\,y_i(t) (\alpha_{i1}-\alpha_{i2})\r|,
  \]
 $\eta(t)\in(0,\,1)$, and
  \[
    \Phi_T(\alpha_1,\,\alpha_2)\le M^2\int\limits_0^T\,\l(\sum\limits_{i=1}^q\,y_i(t) (\alpha_{i1}-\alpha_{i2})\r)^2dt \le M^2\bigl(\lambda_{\max}(J)+\delta\bigr) \l\|T^{\frac12}(\alpha_1-\alpha_2)\r\|^2,
  \]
 where $\lambda_{\max}(J)$ is the maximal eigenvalue of the matrix $J$. It means that condition \textbf{C$_2''$(ii)} is valid for matrix $s_T=T^{\frac12}\mathbb{I}_q$.

 So the condition \textbf{C$_2'$} is valid as well and in \eqref{dbnd4Phi_T_w_d_app} one can choose for $T>T_0$ some numbers
  \[
    c_0>\frac{M^2\lambda_{\max}(J)}{L^2\min\limits_{1\le i\le q}\,J_{ii}},\ \ \ c_1<\frac{L^2\lambda_{\min}(J)}{M^2\max\limits_{1\le i\le q}\,J_{ii}}.
  \]
\end{example}

Inequalities \eqref{dbnd4Phi_T_w_d_app} can be rewritten in the equivalent form
 \begin{equation}
    c_1\bigl\|u-v\bigr\|^2\ \le\ T^{-1}\Psi_T(u,\,v)\ \le\  c_0\bigl\|u-v\bigr\|^2,\ u,v\in U_T(\alpha),\ \alpha\in\mathcal{A}.
  \label{dbnd4Psi_T_app}
 \end{equation}
From the right hand side of \eqref{dbnd4Psi_T_app} it follows \eqref{sup_T^-1Psi_T<=eps}. Similarly, from the left hand side of \eqref{dbnd4Psi_T_app} taking $\nu=0$ we obtain \eqref{inf_T^-1Psi_T>=b} for any $R_0>0$ and it is possible to choose $R_0>0$ satisfying \eqref{inf_T^-1Psi_T>=4B0+a}.

In our example \textbf{A$_1$} due to inequalities \eqref{dbnd4s_iT&d_iT_app} with $s_{iT}=T^{\frac12}$, $i=\overline{1,q}$, the set $U_T(\alpha)$  is bounded uniformly in $T$ and it is not necessary to use condition \eqref{inf_T^-1Psi_T>=4B0+a}. However in Malinvaud theorem we can not ignore the condition \eqref{inf_T^-1Psi_T>=4B0+a} of parameters distinguishability in the cases when the sets $U_T(\alpha)$ expands to infinity as $T\to\infty$ or the set $\mathcal{A}$ is unbounded.

It goes without saying not all the interesting classes of nonlinear regression functions satisfy consistency conditions of Malinvaud or, say, Jennrich~\cite{Jen_APoNLSE} types. The important example of such a class is given by the trigonometric regression functions.

\begin{example}\label{exmp_trig_reg_fn}
 Let
  \begin{equation}
     g(t,\,\alpha)=\sum\limits_{i=1}^N\,\l(A_i\cos\varphi_it+B_i\sin\varphi_it\r),
   \label{trig_reg_fn_app}
  \end{equation}
 $\alpha=\l(\alpha_1,\,\alpha_2,\,\alpha_3,\,\ldots,\,\alpha_{3N-2},\,\alpha_{3N-1},\,\alpha_{3N}\r) =\l(A_1,\,B_1,\,\varphi_1,\,\ldots,\,A_N,\,B_N,\,\varphi_N\r)$, $0<\underline{\varphi}<\varphi_1<\ldots<\varphi_N<\overline{\varphi}<\infty$.

 Under some conditions on angular frequencies $\varphi=\l(\varphi_1,\,\ldots,\,\varphi_N\r)$ distinguishability\ \  (see Walker~\cite{Wal_oEoHCiTSwSDR}, Ivanov~\cite{Iv_aSotPoDHP}, Ivanov et al~\cite{IvLeRuMeZhu_EoHCiRwCDE}) it is possible to prove that at least
  \begin{equation}
     T^{-1}\Phi_T(\widehat{\alpha}_T,\,\alpha_0)\ \overset{\Prob}\longrightarrow\ 0,\ \ \text{as}\ \ T\to\infty,
   \label{cnv_T^-1Phi_T20_ex-app}
  \end{equation}
 $\widehat{\alpha}_T=\l(A_{1T},\,B_{1T},\,\varphi_{1T},\,\ldots,\,A_{NT},\,B_{NT},\,\varphi_{NT}\r)$, $\alpha_0=\l(A_1^0,\,B_1^0,\,\varphi_1^0,\,\ldots,\,A_N^0,\,B_N^0,\,\varphi_N^0\r)$, $\l(C_k^0\r)^2=\l(A_k^0\r)^2+\l(B_k^0\r)^2>0$, $k=\overline{1,N}$.

 The convergence in \eqref{cnv_T^-1Phi_T20_ex-app} can be a.s. In turn, from \eqref{cnv_T^-1Phi_T20_ex-app} it follows (see cited papers)
  \begin{equation}
     A_{iT}\ \overset{\Prob}\longrightarrow\ A_i^0,\ \ B_{iT}\ \overset{\Prob}\longrightarrow\ B_i^0,\ \ T\l(\varphi_{iT}-\varphi_i^0\r)\ \overset{\Prob}\longrightarrow\ 0,\ \ \text{as}\ \ T\to\infty.
   \label{Pcons4A_B_phi_ex-app}
  \end{equation}
 Note that
  \begin{equation}
     T^{-1}d^2_{3k-2,T}(\alpha_0),\ T^{-1}d^2_{3k-1,T}(\alpha_0)\ \to\ \frac12,\ T^{-3}d^2_{3k,T}(\alpha_0)\ \to\ \frac16\l(\l(A_k^0\r)^2+\l(B_k^0\r)^2\r),\ \ \text{as}\ \ T\to\infty,
   \label{cnv4T^-1d_T_ex-app}
  \end{equation}
 $k=\overline{1,N}$.

 From \eqref{Pcons4A_B_phi_ex-app} and \eqref{cnv4T^-1d_T_ex-app} we obtain the relation of condition \textbf{C$_1$} for trigonometric regression:
  \[
    T^{-\frac12}d_T(\alpha_0)\l(\widehat{\alpha}_T-\alpha_0\r)\ \overset{\Prob}\longrightarrow\ 0,\ \ \text{as}\ \ T\to\infty.
  \]
 To check the fulfillment of the condition \textbf{C$_2$} for regression function \eqref{trig_reg_fn_app} we get
  \begin{equation}
     \begin{aligned}
     \l|A_i\cos\varphi_it+B_i\sin\varphi_it-A_i^0\cos\varphi_it-B_i^0\sin\varphi_it\r|\le
     \l|A_i-A_i^0\r|+\l|B_i-B_i^0\r|+\l(|A_i^0|+|B_i^0|\r)t\l|\varphi_i-\varphi_i^0\r|,
     \end{aligned}
   \label{inq4dif_trig_comp_ex-app}
  \end{equation}
 $k=\overline{1,N}$, and therefore
  \[
    \Phi_T(\widehat{\alpha}_T,\,\alpha_0)\le3N\sum\limits_{i=1}^N\, \l(T\l(A_i-A_i^0\r)^2+T\l(B_i-B_i^0\r)^2+\frac13\l(|A_i^0|+|B_i^0|\r)^2T^3\l(\varphi_i-\varphi_i^0\r)^2\r).
  \]
 Using again the relations \eqref{cnv4T^-1d_T_ex-app} we arrive at the inequality of the condition \textbf{C$_2$}.
  \begin{equation}
     \Phi_T(\alpha,\,\alpha_0)\ \le\  c_0\bigl\|d_T(\alpha_0)\l(\alpha-\alpha_0\r)\bigr\|^2,\ \alpha\in\mathcal{A}^c.
   \label{inq4Phi_T_l_C2_ex-app}
  \end{equation}
 with constant $c_0$ depending on $A_i^0$, $B_i^0$, $i=\overline{1,N}$.
\end{example}

The next lemma is the main part of the convergence \eqref{cnv_T^-1Phi_T20_ex-app} proof.

\begin{lemma}
 Under condition \textbf{A$_1$}
  \begin{equation}
     \xi(T)=\sup\limits_{\lambda\in\mathbb{R}}\,\l|T^{-1}\int\limits_0^T\,e^{-i\lambda t}\varepsilon(t)dt\r|\ \overset{\Prob}\longrightarrow\ 0,\ \ \text{as}\ \ T\to\infty.
   \label{cnv4supT^-1int_e_eps_app}
  \end{equation}
\end{lemma}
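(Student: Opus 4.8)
The plan is to estimate $\xi(T)$ by splitting the frequency axis into a compact window $[-\Lambda,\Lambda]$ and its complement and treating them by different devices: a covering argument with a $T$-dependent mesh on the window (exploiting that the individual values $T^{-1}\varepsilon_T(\lambda)$ already tend to $0$ with good moments), and an integration-by-parts estimate on the tail after replacing the noise by one with a smooth kernel. Throughout write $\zeta_T(\lambda)=T^{-1}\int_0^T e^{-\mathrm{i}\lambda t}\varepsilon(t)\,dt$ and $\varepsilon_T(\lambda)=T\zeta_T(\lambda)$, so that $\xi(T)=\sup_{\lambda\in\mathbb{R}}|\zeta_T(\lambda)|$.

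For the window, the key input is a uniform moment bound: for every integer $k\ge1$ there is $C_k<\infty$ with $\ExpV|\varepsilon_T(\lambda)|^{2k}\le C_kT^k$ for all $\lambda\in\mathbb{R}$, $T\ge1$. This follows from the moments--cumulants formula together with \eqref{rep_c_r_by_hat-a} and the fact that under \textbf{A$_1$} all cumulants $d_r$ of $L(1)$ are finite (condition \eqref{bnd_int_e^luPi} forces all polynomial moments of $L(1)$ to exist): by Tonelli each cumulant block of size $r\ge2$ contributes $\int_{[0,T]^r}|c_r|\le|d_r|\,\|\hat a\|_1^{\,r}\,T$, singleton blocks drop out because $\ExpV\varepsilon(t)=0$, and a partition of the $2k$ time variables into $m\le k$ blocks contributes $O(T^m)=O(T^k)$. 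Hence $\ExpV|\zeta_T(\lambda)|^{2k}\le C_kT^{-k}$. Now cover $[-\Lambda,\Lambda]$ by $N=\lceil T^2\rceil$ arcs of length $O(N^{-1})$ with centres $\lambda_j$; since $|\zeta_T'(\lambda)|\le T^{-1}\int_0^T t|\varepsilon(t)|\,dt\le\int_0^T|\varepsilon(t)|\,dt$ and, by Lemma~\ref{lema_int_eps^2}, $T^{-1}\int_0^T|\varepsilon(t)|\,dt\le(\nu_T^*)^{1/2}=O_{\mathrm P}(1)$, the interpolation error is at most $O(N^{-1})\sup_\mu|\zeta_T'(\mu)|=O(T^{-2})\cdot O_{\mathrm P}(T)=O_{\mathrm P}(T^{-1})$, while by Markov's inequality and the moment bound with $k=3$, $\Prob\{\max_{1\le j\le N}|\zeta_T(\lambda_j)|>\varepsilon/2\}\le N(\varepsilon/2)^{-6}C_3T^{-3}=O(T^{-1})$. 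Consequently $\sup_{|\lambda|\le\Lambda}|\zeta_T(\lambda)|\overset{\Prob}{\longrightarrow}0$ as $T\to\infty$, for every fixed $\Lambda$.

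For the tail I approximate. Fix $\hat a_M\in C^1(\mathbb{R})$ with $\hat a_M,\hat a_M'\in L_1(\mathbb{R})\cap L_2(\mathbb{R})$ and $\|\hat a-\hat a_M\|_2$ small, and set $\varepsilon^{(M)}(t)=\int_{\mathbb{R}}\hat a_M(t-s)\,dL(s)$, $r^{(M)}=\varepsilon-\varepsilon^{(M)}$, $\delta^{(M)}(t)=\int_{\mathbb{R}}\hat a_M'(t-s)\,dL(s)$. The remainder is uniformly negligible: $\sup_\lambda T^{-1}|\int_0^T e^{-\mathrm{i}\lambda t}r^{(M)}(t)\,dt|\le T^{-1}\int_0^T|r^{(M)}(t)|\,dt$, whose expectation is $\le\sqrt{d_2}\,\|\hat a-\hat a_M\|_2$, uniformly in $T$. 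For $\varepsilon^{(M)}$, a stochastic Fubini and a deterministic integration by parts in $t$ give, for $\lambda\neq0$, $|\int_0^T e^{-\mathrm{i}\lambda t}\varepsilon^{(M)}(t)\,dt|\le|\lambda|^{-1}(|\varepsilon^{(M)}(T)|+|\varepsilon^{(M)}(0)|+\int_0^T|\delta^{(M)}(t)|\,dt)$, hence $\sup_{|\lambda|\ge\Lambda}T^{-1}|\int_0^T e^{-\mathrm{i}\lambda t}\varepsilon^{(M)}(t)\,dt|\le\Lambda^{-1}\eta_T^{(M)}$ with $\ExpV\eta_T^{(M)}\le 2T^{-1}\sqrt{d_2}\|\hat a_M\|_2+\sqrt{d_2}\|\hat a_M'\|_2$, bounded in $T\ge1$. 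Finally assemble: $\Prob\{\xi(T)>\varepsilon\}$ is bounded by the sum of $\Prob\{\sup_{|\lambda|\le\Lambda}|\zeta_T|>\varepsilon/3\}$, $\Prob\{\sup_{|\lambda|>\Lambda}T^{-1}|\int_0^T e^{-\mathrm{i}\lambda t}\varepsilon^{(M)}|>\varepsilon/3\}$ and $\Prob\{\sup_\lambda T^{-1}|\int_0^T e^{-\mathrm{i}\lambda t}r^{(M)}|>\varepsilon/3\}$; given $\rho>0$ choose first $M$ to make the third term $<\rho/3$ for all $T$, then $\Lambda$ to make the second $<\rho/3$ for all large $T$, and finally $T$ large for the first, which yields \eqref{cnv4supT^-1int_e_eps_app}.

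The main obstacle is the window: naive estimates give only $\sup_{|\lambda|\le\Lambda}|\zeta_T(\lambda)|=O_{\mathrm P}(1)$ (for instance $\|\zeta_T\|_{L_2}\|\zeta_T'\|_{L_2}=(2\pi\nu_T^*/T)^{1/2}(2\pi T\nu_T^*)^{1/2}=2\pi\nu_T^*$ by Plancherel, i.e. Nikolskii's inequality for the exponential-type function $\varepsilon_T$), because the pathwise modulus of continuity of $\zeta_T$ in $\lambda$ degrades linearly in $T$; one must therefore exploit the probabilistic cancellation in the individual values $\zeta_T(\lambda)$ via the higher-moment bound together with a mesh that shrinks faster than $T^{-1}$, rather than any $T$-uniform equicontinuity of the paths. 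A minor technical point is the justification of the stochastic Fubini and the integration by parts underlying the tail estimate for $\varepsilon^{(M)}$, which is routine given $\hat a_M,\hat a_M'\in L_1(\mathbb{R})\cap L_2(\mathbb{R})$.
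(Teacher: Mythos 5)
Your argument is correct (modulo the routine justifications you yourself flag), but it takes a genuinely different route from the paper. The paper never splits the frequency axis: it uses the identity $\l|\int_0^T e^{-\mathrm{i}\lambda t}\varepsilon(t)\,dt\r|^2=2\int_0^T\cos(\lambda u)\int_0^{T-u}\varepsilon(t+u)\varepsilon(t)\,dt\,du$, bounds $|\cos(\lambda u)|\le 1$ to obtain a majorant of $\xi^2(T)$ that is free of $\lambda$, namely $2T^{-2}\int_0^T\bigl|\int_0^{T-u}\varepsilon(t+u)\varepsilon(t)\,dt\bigr|\,du$, and then estimates its expectation via Cauchy--Schwarz and the fourth-order moment and cumulant formulas \eqref{4ordr_mmnt_fn}, \eqref{rep_c_r_by_hat-a}, arriving at $\ExpV\xi^2(T)=O\l(T^{-\frac12}\r)$ in one stroke; this gives $L_2$-convergence with a rate, uses only the fourth moment of $L(1)$, and, as the paper remarks, does not even need $\hat a\in L_1(\mathbb{R})$. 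Your proof replaces this with a discretization-plus-smoothing scheme: a chaining bound on a compact window with a mesh of size $O(T^{-2})$ driven by the uniform cumulant estimate $\ExpV|\varepsilon_T(\lambda)|^{2k}\le C_kT^k$ (you need $k=3$, hence sixth moments of $L(1)$), together with a kernel approximation, stochastic Fubini and integration by parts for the tail. That is more machinery and yields only convergence in probability without an explicit rate, but it is a legitimate workaround for the obstacle you correctly identify (no $T$-uniform equicontinuity of $\lambda\mapsto T^{-1}\varepsilon_T(\lambda)$); the autocorrelation identity above is precisely the trick that makes that obstacle disappear. One caveat, which you share with the paper's own conventions: \eqref{bnd_int_e^luPi} literally controls only the positive jumps, so ``all polynomial moments of $L(1)$ exist'' is an implicit strengthening of \textbf{A$_1$}; since the paper itself uses $d_4$ in this lemma (and all $d_r$ in Appendix B), this is not a gap specific to your argument, though your route needs $\ExpV L^6(1)<\infty$ where the paper's needs only $\ExpV L^4(1)<\infty$.
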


\begin{proof}
 Since
  \[
    \l|\int\limits_0^T\,e^{-i\lambda t}\varepsilon(t)dt\r|^2=\int\limits_{-T}^T\,e^{-i\lambda u} \int\limits_0^{T-|u|}\,\varepsilon(t+|u|)\varepsilon(t)dtdu=2\int\limits_0^T\,\cos{\lambda u} \int\limits_0^{T-u}\,\varepsilon(t+u)\varepsilon(t)dtdu,
  \]
 then
  \[
    E\xi^2(T)\le 2T^{-2}\int\limits_0^T\,\ExpV\l|\int\limits_0^{T-u}\,\varepsilon(t+u)\varepsilon(t)dt\r|du \le 2T^{-2}\int\limits_0^T\,K^{\frac12}(u)du.
  \]

 By formula \eqref{4ordr_mmnt_fn}
  \[
    \begin{aligned}
       K(u)=&\int\limits_0^{T-u}\int\limits_0^{T-u}\,\ExpV\varepsilon(t+u)\varepsilon(s+u)\varepsilon(t)\varepsilon(s)dtds =\int\limits_0^{T-u}\int\limits_0^{T-u}\,c_4(t+u,\,s+u,\,t,\,s)dtds+\\
       &+(T-u)^2B^2(u)+\int\limits_0^{T-u}\int\limits_0^{T-u}\,B^2(t-s)dtds +\int\limits_0^{T-u}\int\limits_0^{T-u}\,B(t-s+u)B(t-s-u)dtds\le\\
       &\le K_1(u)+K_2(u)+K_3(u)+|K_4(u)|,
    \end{aligned}
  \]
 and
  \begin{equation}
     E\xi^2(T)\le 2T^{-2}\int\limits_0^T\,\l(K_1^{\frac12}(u)+K_2^{\frac12}(u)+K_3^{\frac12}(u)+\l|K_4(u)\r|^{\frac12}\r)du.
   \label{inq4Exi^2T_app}
  \end{equation}

 By formula \eqref{rep_c_r_by_hat-a}
  \[
   \begin{aligned}
     K_1(u)=&d_4\int\limits_{\mathbb{R}}\,\l(\int\limits_0^{T-u}\,\hat{a}(t+u-r)\hat{a}(t-r)\r)^2dr
     \le d_4\int\limits_{\mathbb{R}}\,\l(\int\limits_0^{T-u}\,\hat{a}^2(t+u-r)dt \int\limits_0^{T-u}\,\hat{a}^2(t-r)dt\r)dr\le\\
     &\le d_4\l\|\hat{a}\r\|_2^2\int\limits_0^{T-u}\,dt\int\limits_{\mathbb{R}}\,\hat{a}^2(t+u-r)dr \le d_4\l\|\hat{a}\r\|_2^4(T-u),
   \end{aligned}
  \]
 that is
  \begin{equation}
     T^{-2}\int\limits_0^T\,K_1^{\frac12}(u)du \le d_4^{\frac12}\l\|\hat{a}\r\|_2^2 T^{-2}\int\limits_0^T\,\sqrt{T-u}\,du=\frac23 d_4^{\frac12}\l\|\hat{a}\r\|_2^2T^{-\frac12}.
   \label{inq4int_K_1^1/2_app}
  \end{equation}
 Obviously,
  \begin{equation}
     T^{-2}\int\limits_0^T\,K_2^{\frac12}(u)du =T^{-2}\int\limits_0^T\,(T-u)|B(u)|du \le3^{-\frac12}\|B\|_2T^{-\frac12},
   \label{inq4int_K_2^1/2_app}
  \end{equation}
  \begin{equation}
     T^{-2}\int\limits_0^T\,K_3^{\frac12}(u)du \le\frac23\|B\|_2T^{-\frac12},
   \label{inq4int_K_3^1/2_app}
  \end{equation}
  \begin{equation}
     T^{-2}\int\limits_0^T\,K_4^{\frac12}(u)du =T^{-2}\int\limits_0^T\,\l(\frac12\int\limits_0^{T-u}\ \int\limits_0^{T-u}\,\l(B^2(t-s+u)+B^2(t-s-u)\r)dtds\r)^{\frac12}du\le\frac23\|B\|_2T^{-\frac12}.
   \label{inq4int_K_4^1/2_app}
  \end{equation}

 From inequalities \eqref{inq4Exi^2T_app} - \eqref{inq4int_K_4^1/2_app} it follows
  \[
    \ExpV\xi^2(T)=O\l(T^{-\frac12}\r),\ \ \text{as}\ \ T\to\infty.
  \]
\end{proof}

The result of the lemma can be strengthened to a.s. convergence in \eqref{cnv4supT^-1int_e_eps_app}. Note also that in the proof we did not use the condition $\hat{a}\in L_1(\mathbb{R})$.

\section{LSE asymptotic normality}\label{app_LSE_AsymNorm}

$\indent$Cumbersome sets of conditions on the behavior of the nonlinear regression function are used in the proofs of the LSE asymptotic normality of the model parameter can be found, for example, in~\cite{IvLeo_SAoRF_En,Iv_AToNR,IvLeRuMeZhu_EoHCiRwCDE}, and it does not make sense to write here all of them. We will comment only on the conditions associated with the proof of the CLT for one weighted integral of the linear process $\varepsilon$ in the observation model \eqref{c_n_reg_m}.

Consider the family of the matrix-valued measures $\mu _T (dx;\,\alpha)=\l(\mu_T^{jl}(dx;\,\alpha)\r)_{j,l=1}^q$, $T>T_0$, $\alpha\in\mathcal{A}$, with densities
 \begin{equation}
    \mu_T^{jl}(x;\,\alpha)=g_T^j(x,\,\alpha)\overline{g_T^l(x,\,\alpha)} \l(\int\limits_{\mathbb{R}}\l|g_T^j(x,\,\alpha)\r|^2dx \int\limits_{\mathbb{R}}\l|g_T^l(x,\,\alpha)\r|^2dx\r)^{-\frac12},\ x\in\mathbb{R},
  \label{pre_SpMs4ANoLSE_app}
 \end{equation}
where
 \[
   g_T^j(x,\,\theta)=\int\limits_0^T\;e^{ixt}g_j(t,\,\theta)dt,\ j=\overline{1,q}.
 \]

\textbf{1)} Suppose that the weak convergence $\mu_T\ \Rightarrow\ \mu$ as $T\to\infty$ holds, where $\mu_T$ is defined by \eqref{pre_SpMs4ANoLSE_app} and $\mu$ is a positive definite matrix measure.

This condition means that the element $\mu^{jl}$ of the matrix-valued measure $\mu$ are complex measures of bounded variation, and the matrix $\mu(A)$ is non-negative definite for any set $A\in\mathcal{Z}$, with $\mathcal{Z}$ denoting the $\sigma$-algebra of Lebesgue measurable subsets of $\mathbb{R}$, and $\mu(\mathbb{R})$ is positive definite matrix, (see, for example, Ibragimov and Rozanov~\cite{IbrRoz_GRP}).

The following definition can be found in Grenander~\cite{Gren_OEoRCicoAD}, Grenander and Rosenblatt~\cite{GrenRos_SAoSTS}, Ibragimov and Rozanov~\cite{IbrRoz_GRP}, Ivanov and Leonenko~\cite{IvLeo_SAoRF_En}.

\begin{definition}
 The positive-definite matrix-valued measure $\mu(dx;\,\alpha)=\l(\mu^{jl}(x;\,\alpha)\r)_{j,l=1}^q$ is said to be the spectral measure of regression function $g(t,\,\alpha)$.
\end{definition}

Practically the components $\mu^{jl}(x;\,\alpha)$ are determined from the relations
 \begin{equation}
    R_{jl}(h;\,\alpha)=\lim\limits_{T\to\infty}\,d_{jT}^{-1}(\alpha)d_{lT}^{-1}(\alpha) \int\limits_0^T\,g_j(t+h,\,\alpha)g_l(t,\,\alpha)dx =\int\limits_{\mathbb{R}}e^{i\lambda h}\mu^{jl}(d\lambda;\,\alpha),\ j,l=\overline{1,q},
  \label{def_R_jl_app}
 \end{equation}
where it is supposed that the matrix function $\bigl(R_{jl}(h;\,\alpha)\bigr)$ is continuous at $h=0$.

Continuing Example \ref{exmp_trig_reg_fn} with the trigonometric regression function \eqref{trig_reg_fn_app} from Appendix \ref{app_LSE_cons}, we can state using \eqref{def_R_jl_app} that the function $g(t,\,\alpha)$ has a block-diagonal spectral measure $\mu(d\lambda;\,\alpha)$ (see e.g., Ivanov et al~\cite{IvLeRuMeZhu_EoHCiRwCDE}) with blocks
 \begin{equation}
    \l(\begin{array}{ccc}
      \varkappa_k & i\rho_k & \overline{\beta}_k \\
      -i\rho_k & \varkappa_k & \overline{\gamma}_k \\
      \beta_k & \gamma_k & \varkappa_k
    \end{array}\r)
    ,\ k=\overline{1,N},
  \label{blck_mtrx4spec_msr_ex-app}
 \end{equation}
where
 \[
   \beta_k=\frac{\sqrt{3}}{2C_k}\bigl(B_k\varkappa_k+iA_k\rho_k\bigr),\ \ \ \gamma_k=\frac{\sqrt{3}}{2C_k}\bigl(-A_k\varkappa_k+iB_k\rho_k\bigr),\ \ \ C_k=\sqrt{A_k^2+B_k^2},\ \ k=\overline{1,N}.
 \]
In \eqref{blck_mtrx4spec_msr_ex-app} the measure $\varkappa_k=\varkappa_k(d\lambda)$ and the signed measure $\rho_k=\rho_k(d\lambda)$ are concentrated at the points $\pm\varphi_k$, and $\varkappa_k\bigl(\l\{\pm\varphi_k\r\}\bigr)=\frac12$, $\rho_k\bigl(\l\{\pm\varphi_k\r\}\bigr)=\pm\frac12$.

Returning to the general case let the parameter $\alpha\in\mathcal{A}$ of regression function $g(t,\,\alpha)$ be fixed. We will use the notation $d_{iT}^{-1}(\alpha)g_i(t,\,\alpha)=b_{iT}(t,\,\alpha)$ and condition

\textbf{2)} $\sup\limits_{t\in[0,\,T]}\,\l|b_{iT}(t,\,\alpha)\r|\le c_iT^{-\frac12}$, $i=\overline{1,q}$.

The next CLT is an important part of the proof of LSE $\widehat{\alpha}_T$ asymptotic normality in the model \eqref{c_n_reg_m} and fully uses condition \textbf{A$_1$}.

\begin{theorem} \label{thm_AN4fncnl_appB}
  Under conditions \textbf{A$_1$, 1)} and \textbf{2)} the vector
   \begin{equation}
      \zeta_T=d_T^{-1}(\alpha)\int\limits_0^T\,\varepsilon(t)\nabla g(t,\,\alpha)dt =\l(\int\limits_0^T\,\varepsilon(t)b_{iT}(t,\,\alpha)dt\r)_{i=1}^q
    \label{rv4CLT_asym_norm_app}
   \end{equation}
  is asymptotically, as $T\to\infty$, normal $N(0,\Sigma)$,
   \[
     \Sigma=2\pi\int\limits_{\mathbb{R}}\,f(\lambda)\mu(d\lambda;\,\alpha) =d_2\int\limits_{-\infty}^\infty\,|a(\lambda)|^2\mu(d\lambda;\,\alpha).
   \]
\end{theorem}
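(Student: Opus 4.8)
The statement is a central limit theorem for the $q$-dimensional weighted linear functional $\zeta_T$ of the Lévy-driven moving-average process $\varepsilon$. I would establish asymptotic normality by the Cramér--Wold device: fix $z=(z_1,\dots,z_q)\in\mathbb{R}^q$ and prove that the scalar random variable
\[
  \langle z,\zeta_T\rangle=\int_0^T\varepsilon(t)\,h_T(t)\,dt,\qquad
  h_T(t)=\sum_{i=1}^q z_i\,b_{iT}(t,\alpha),
\]
converges in distribution to $N\bigl(0,\,z'\Sigma z\bigr)$. Here $h_T$ is supported on $[0,T]$, and by condition \textbf{2)} it is uniformly bounded, $\sup_{t\in[0,T]}|h_T(t)|\le C\,T^{-1/2}$ with $C=\sum_i|z_i|c_i$. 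Substituting the moving-average representation \eqref{LinRep_RmdnNse} and using a stochastic Fubini argument, one can write
\[
  \int_0^T\varepsilon(t)h_T(t)\,dt=\int_{\mathbb{R}}\Bigl(\int_0^T\hat a(t-s)h_T(t)\,dt\Bigr)dL(s)=\int_{\mathbb{R}}\psi_T(s)\,dL(s),
\]
an integral of a deterministic $L_2(\mathbb{R})$ kernel against the centred Lévy process. Thus $\langle z,\zeta_T\rangle$ is an infinitely divisible random variable whose characteristic function is computable explicitly from $\kappa$ in \eqref{def_kappa}.

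\textbf{Main steps.} First I would verify the Fubini step and the membership $\psi_T\in L_1(\mathbb{R})\cap L_2(\mathbb{R})$, which follows from $\hat a\in L_1\cap L_2$ (condition \textbf{A$_1$}) and boundedness of $h_T$. Second, compute the variance:
\[
  \operatorname{Var}\langle z,\zeta_T\rangle
  =\int_0^T\!\!\int_0^T B(t-u)h_T(t)h_T(u)\,dt\,du,
\]
and show it converges to $z'\Sigma z$. This is where conditions \textbf{1)} and \textbf{2)} enter: writing $B(t-u)=\int_{\mathbb{R}}e^{i\lambda(t-u)}f(\lambda)\,d\lambda$ (since $\hat a\in L_1\cap L_2$ gives $f\in L_1\cap L_\infty$) and inserting the Fourier transforms of the $b_{iT}$, the double integral becomes $2\pi\int_{\mathbb{R}}f(\lambda)\,\mu_T(d\lambda;\alpha)$ in the notation of \eqref{pre_SpMs4ANoLSE_app}; then weak convergence $\mu_T\Rightarrow\mu$ together with $f$ bounded and continuous yields the limit $2\pi\,z'\!\int_{\mathbb{R}}f(\lambda)\,\mu(d\lambda;\alpha)\,z=z'\Sigma z$. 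Third, prove the Lindeberg-type / Lyapunov asymptotic negligibility: since $\langle z,\zeta_T\rangle=\int_{\mathbb{R}}\psi_T(s)\,dL(s)$ with $\|\psi_T\|_\infty\to0$ (because $\|h_T\|_\infty=O(T^{-1/2})$ and $\hat a\in L_1$ give $\|\psi_T\|_\infty\le\|\hat a\|_1\|h_T\|_\infty=O(T^{-1/2})$) while $\|\psi_T\|_2$ stays bounded away from $0$ and $\infty$, the contribution of the jump part of $L$ vanishes asymptotically. Concretely, one shows $\log\ExpV\exp\{iw\langle z,\zeta_T\rangle\}=\int_{\mathbb{R}}\kappa\bigl(w\psi_T(s)\bigr)\,ds\to-\tfrac{w^2}{2}\,z'\Sigma z$ by Taylor-expanding $\kappa(z)=-\tfrac12 d_2 z^2+o(z^2)$ near $0$ (valid since $\ExpV L(1)=0$ and $\ExpV L^2(1)=d_2<\infty$), using $\sup_s|w\psi_T(s)|\to0$ to control the remainder uniformly, and $\int\psi_T^2\to(2\pi/d_2)\,z'\Sigma z$ from the variance computation. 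This is essentially a version of the classical triangular-array CLT for stochastic integrals against a Lévy process; alternatively one may cite Rajput--Rosinski-type limit theorems, but the direct characteristic-function route is self-contained.

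\textbf{The main obstacle.} The routine-but-delicate part is the variance convergence: one must pass from the time-domain double integral through Parseval to the spectral measure $\mu_T$ and then apply $\mu_T\Rightarrow\mu$, and this requires care because $\mu_T$ is a \emph{matrix-valued} measure with off-diagonal components that are complex signed measures, and because weak convergence only controls integrals against bounded continuous test functions — so one genuinely uses that $f\in L_1\cap L_\infty$ is continuous (guaranteed under \textbf{A$_1$}, since $f=(2\pi)^{-1}d_2|a|^2$ with $a=\widehat{\hat a}\in C_0$ by $\hat a\in L_1$) and that the masses $\mu_T(\mathbb{R})$ are bounded. The second subtle point, fully using \textbf{A$_1$} as the paper emphasises, is the uniform control of the remainder in $\kappa\bigl(w\psi_T(s)\bigr)=-\tfrac12 d_2 w^2\psi_T^2(s)+R_T(s)$ with $\int_{\mathbb{R}}|R_T(s)|\,ds\to0$; this needs $\int_{|u|\ge1}u^2\Pi(du)<\infty$ (from the exponential-moment hypothesis \eqref{bnd_int_e^luPi}) to bound $|\kappa''(z)-\kappa''(0)|$ near $0$, and the uniform smallness $\sup_s|\psi_T(s)|=O(T^{-1/2})$ to conclude. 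Once these two estimates are in place, Cramér--Wold assembles the multivariate statement and identifies the limiting covariance as $\Sigma=2\pi\int_{\mathbb{R}}f(\lambda)\mu(d\lambda;\alpha)=d_2\int_{\mathbb{R}}|a(\lambda)|^2\mu(d\lambda;\alpha)$.
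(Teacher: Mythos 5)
Your argument is sound and, after the Cram\'er--Wold reduction (which the paper also makes), it diverges from the paper's proof in an essential way: the paper proves convergence of \emph{all moments} of $\eta_T=\langle\zeta_T,z\rangle$, using the Leonov--Shiryaev formula to expand $\ExpV\eta_T^n$ into cumulant contributions and then the representation \eqref{rep_c_r_by_hat-a} together with the bound $\sup_{t\in[0,T]}|R_T(t)|\le\|c\|\|z\|T^{-1/2}$ from condition \textbf{2)} to show that every cumulant term of order $l\ge3$ is $O\bigl(T^{-(l/2-1)}\bigr)$, so only the Gaussian pairings survive; this is precisely where the exponential-moment part of \textbf{A$_1$} is ``fully used'' (Remark 2), since the method of moments needs all cumulants $d_l$ of $L(1)$ to be finite. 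You instead pass through the stochastic-Fubini representation $\eta_T=\int_{\mathbb{R}}\psi_T(s)\,dL(s)$ and compute the characteristic function $\int_{\mathbb{R}}\kappa\bigl(w\psi_T(s)\bigr)ds$ directly, Taylor-expanding $\kappa$ at the origin and controlling the remainder via $\|\psi_T\|_\infty\le\|\hat a\|_1\|h_T\|_\infty=O(T^{-1/2})$ and boundedness of $\|\psi_T\|_2$; this is a triangular-array CLT for L\'evy integrals and only needs $\ExpV L(1)=0$, $\ExpV L^2(1)<\infty$, so your route actually proves the theorem under weaker moment assumptions than the paper's, at the price of justifying the Fubini interchange and the Rajput--Rosinski formula for the kernel $\psi_T$ (both routine under \textbf{A$_1$}). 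The variance step is handled identically in both proofs: Parseval turns $\Var\eta_T$ into $2\pi\int f\,d\mu_{z,T}$ and condition \textbf{1)} plus $f=(2\pi)^{-1}d_2|a|^2\in C_0$ (from $\hat a\in L_1$) gives the limit $z'\Sigma z$; your remarks on the matrix-valued, complex-signed nature of $\mu_T$ and on needing $f$ bounded and continuous are exactly the right cautions. One small slip: since $\Var\eta_T=d_2\|\psi_T\|_2^2$, the correct normalization is $\int_{\mathbb{R}}\psi_T^2(s)\,ds\to d_2^{-1}z'\Sigma z$ (no factor $2\pi$); with that correction your expansion gives $-\tfrac{w^2}{2}z'\Sigma z$ as claimed, so the discrepancy is only a bookkeeping error, not a gap.
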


\begin{prfapthm}
 For any $z=\l(z_1,\,\ldots,\,z_q\r)\in\mathbb{R}^q$ set
  \[
    \eta_T=\l<\zeta_T,\,z\r>=\int\limits_0^T\,\varepsilon(t)S_T(t)dt,\ \ S_T(t)=\sum\limits_{i=1}^q\,b_{iT}(t,\,\alpha)z_i.
  \]
 By condition \textbf{1)}
  \[
    \sigma^2(z)=\lim\limits_{T\to\infty}\,\ExpV\eta_T^2 =2\pi\int\limits_{\mathbb{R}}\,f(\lambda)\mu_z(d\lambda;\,\alpha),
  \]
 $\mu_z(d\lambda;\,\alpha)=\sum\limits_{i,j=1}^q\,\mu^{ij}(d\lambda;\,\alpha)z_iz_j$.

 To prove the theorem it is sufficient to show for any $z\in\mathbb{R}$  and $\nu\ge1$, that
  \begin{equation}
     \lim\limits_{T\to\infty}\,\ExpV\eta_T^n=\ExpV\eta^n
        =\l\{\begin{array}{ll}
              (n-1)!!\sigma^n(z), & n=2\nu, \\
                   0            , & n=2\nu+1.\end{array}\r.
   \label{lim4Eeta_T^n_app}
  \end{equation}

 Use the Leonov-Shiryaev formula (see, e.g., Ivanov and Leonenko~\cite{IvLeo_SAoRF_En}). Let
  \[
    I=\{1,\,2,\,\ldots,\,n\},\ I_p=\l\{i_1,\,\ldots,\,i_{l_p}\r\}\subset I,\ c(I_p)=c_{l_p}\l(t_{i_1},\,\ldots,\,t_{i_{l_p}}\r).
  \]
 Then
  \begin{equation}
      m(I)=m_n\l(t_1,\,\ldots,\,t_n\r)=\sum\limits_{A_r}\,\prod\limits_{p=1}^r\,c(I_p),
   \label{m_I_repr_app}
  \end{equation}
 where $\sum\limits_{A_r}$ denotes summation over all unordered partitions $A_r=\l\{\bigcup\limits_{p=1}^r\,I_p\r\}$ of the set $I$ into sets $I_1,\,\ldots,\,I_r$ such that $I=\bigcup\limits_{p=1}^r\,I_p$, $I_i\cap I_j=\emptyset$, $i\ne j$.

 Since
  \begin{equation}
     \ExpV\eta_T^n=\int\limits_{[0,\,T]^n}\,m_n(t_1,\,\ldots,\,t_n)\prod\limits_{k=1}^n\,R_T(t_k)dt_1\ldots dt_n,
   \label{Eeta_T^n_repr_app}
  \end{equation}
 then the application of formula \eqref{m_I_repr_app} to \eqref{Eeta_T^n_repr_app} shows that to obtain \eqref{lim4Eeta_T^n_app} it is sufficient to prove
  \begin{equation}
     I(l)=\int\limits_{[0,\,T]^l}\,c_l(t_1,\,\ldots,\,t_l)\prod\limits_{k=1}^l\,R_T(t_k)dt_1\ldots dt_l\ \ \longrightarrow\ 0,\ \ \text{as}\ \ T\to\infty.
   \label{cnv_I_l_2_0_app}
  \end{equation}
 for all $i=\overline{3,n}$. Taking into account the equality $\ExpV\varepsilon(t)=0$, from \eqref{cnv_I_l_2_0_app} will follow that in \eqref{lim4Eeta_T^n_app} all the odd moments $\ExpV\eta^{2\nu+1}=0$. On the other hand, for even moments $\ExpV\eta^{2\nu}$ we shall find that in \eqref{Eeta_T^n_repr_app} thanks to \eqref{m_I_repr_app} only those terms correspond to the partitions of the set $I=\{1,\,2,\,\ldots,\,2\nu\}$ into pairs of indices will remain nonzero, i.e. "Gaussian part" : all $l_p=2$. In \eqref{m_I_repr_app} it will be $(2\nu-1)!!$ of such terms and each of them will be equal to $\sigma^{2\nu}(z)$.

 Let us prove \eqref{cnv_I_l_2_0_app}. We note that condition \textbf{2)} implies
  \[
    \sup\limits_{t\in[0,\,T]}\,\l|R_T(t)\r|\le\|c\|\,\|z\|\,T^{-\frac12},\ \ c=\l(c_1,\,\ldots,\,c_q\r),\ \ z=\l(z_1,\,\ldots,\,z_q\r).
  \]
 Then using formula \eqref{rep_c_r_by_hat-a} we have
  \[
   \begin{aligned}
     |I(l)|=&\l|\,\int\limits_{[0,\,T]^l}\,c_l(t_1-t_l,\,\ldots,\,t_{l-1}-t_l,\,0)\prod\limits_{k=1}^l\,R_T(t_k)dt_1\ldots dt_l\r|\le\\
     \le&|d_l|\,\l|\int\limits_{\mathbb{R}}\,ds\int\limits_{[0,\,T]^l}\, \l(\prod\limits_{i=1}^{l-1}\,\hat{a}\l(t_i-t_l-s\r)\r)\hat{a}(-s)\prod\limits_{k=1}^l\,R_T(t_k)dt_1\ldots dt_l\r|\le\\
     \le&|d_l|\,\int\limits_{\mathbb{R}}\,\l|\hat{a}(-s)\r|\int\limits_0^T\, \l(\int\limits_0^T\,\l|\hat{a}\l(t-t_l-s\r)R_T(t)\r|dt\r)^{l-1} \l|R_T(t_l)\r|dt_lds\le\\
     \le&|d_l|\,\l(\|c\|^{l-1}\|z\|^{l-1}\l\|\hat{a}\r\|_1^lT^{-\frac{l-1}2}\r)\l(\|c\|\,\|z\|\,T^{\frac12}\r)=
   \end{aligned}
  \]
  \begin{equation}
     =|d_l|\,\bigl(\|c\|\,\|z\|\,\l\|\hat{a}\r\|_1\bigr)^lT^{-\l(\frac l2-1\r)}\ \to\ 0,\ \ \text{as}\ \ T\to\infty,\ \ l\ge3.
   \label{prf_cnv4I_l_2_0_app}
  \end{equation}
\end{prfapthm}

To obtain \eqref{prf_cnv4I_l_2_0_app} we have used $\hat{a}\in L_1(\mathbb{R})$ only.

Using the theorem, just as in the works cited above (for definiteness, we turn our attention to Ivanov et al~\cite{IvLeRuMeZhu_EoHCiRwCDE}), it can be proved that, if a number of additional conditions on the regression function are satisfied, the normalized LSE $d_T(\alpha_0)\l(\widehat{\alpha}_T-\alpha_0\r)$ is asymptotically normal $N\l(0,\,\Sigma_{_{LSE}}\r)$, with
 \[
    \begin{aligned}
       \Sigma_{_{LSE}}=&2\pi\l(\int\limits_{\mathbb{R}}\,\mu(d\lambda;\,\alpha_0)\r)^{-1} \int\limits_{\mathbb{R}}\,f(\lambda)\mu(d\lambda;\,\alpha_0)\, \l(\int\limits_{\mathbb{R}}\,\mu(d\lambda;\,\alpha_0)\r)^{-1}=\\
       &d_2\l(\int\limits_{\mathbb{R}}\,\mu(d\lambda;\,\alpha_0)\r)^{-1} \int\limits_{\mathbb{R}}\,|a(\lambda)|^2\mu(d\lambda;\,\alpha_0)\, \l(\int\limits_{\mathbb{R}}\,\mu(d\lambda;\,\alpha_0)\r)^{-1}.
    \end{aligned}
 \]

Note that, firstly, our conditions \textbf{N$_3$, 1), 2)} are included in the conditions for the LSE asymptotic normality of Ivanov et al~\cite{IvLeRuMeZhu_EoHCiRwCDE}, and, secondly, the trigonometric regression function \eqref{trig_reg_fn_app} satisfies the conditions of Ivanov et al~\cite{IvLeRuMeZhu_EoHCiRwCDE}. Moreover, using \eqref{blck_mtrx4spec_msr_ex-app} and \eqref{cnv4T^-1d_T_ex-app} we conclude that for the trigonometric model the normalized LSE
 \[
   \begin{aligned}
       &\Bigl(T^{\frac12}\l(A_{1T}-A_1^0\r),\,T^{\frac12}\l(B_{1T}-B_1^0\r),\,T^{\frac32}\l(\varphi_{1T}-\varphi_1^0\r), \,\ldots,\Bigr.\\
       &\hspace*{39mm} \Bigl.T^{\frac12}\l(A_{NT}-A_N^0\r),\,T^{\frac12}\l(B_{NT}-B_N^0\r),\, T^{\frac32}\l(\varphi_{NT}-\varphi_N^0\r) \Bigr)
   \end{aligned}
 \]
is asymptotically normal $N\l(0,\,\Sigma_{_{TRIG}}\r)$, where $\Sigma_{_{TRIG}}$ is a block diagonal matrix with blocks
 \[
   \dfrac{4\pi f\l(\varphi_k^0\r)}{\l(C_k^0\r)^2}
       \l(\begin{array}[pos]{ccc}
            \l(A_k^0\r)^2+4\l(B_k^0\r)^2 & -3A_k^0B_k^0                 & -6B_k^0 \\
                      -3A_k^0B_k^0       & \l(B_k^0\r)^2+4\l(A_k^0\r)^2 &  6A_k^0 \\
                        -6B_k^0          &          6A_k^0              & 12      \end{array}\r),\ k=\overline{1,N}.
 \]
The matrix $\Sigma_{_{TRIG}}$ is positive definite, if $f\l(\varphi_k^0\r)>0$, $k=\overline{1,N}$. Hovewer it follows from our condition \textbf{A$_2$(iii)}.

Note also that condition \textbf{N$_2$} is satisfied, for example, for the trigonometric regression function \eqref{trig_reg_fn_app}. Indeed, in this case
 \[
   g'(t,\,\alpha)=\sum\limits_{i=1}^N\,\l(-\varphi_iA_i\sin\varphi_it+\varphi_iB_i\cos\varphi_it\r),
 \]
and similarly to \eqref{inq4dif_trig_comp_ex-app}
 \[
   \begin{aligned}
    &\l|-\varphi_iA_i\sin\varphi_it+\varphi_iB_i\cos\varphi_it+\varphi_i^0A_i^0\sin\varphi_i^0t- \varphi_i^0B_i^0\cos\varphi_i^0t\r|\le\\
    &\hspace*{21mm}\le\overline{\varphi}\bigl(\l|A_i-A_i^0\r|+\l|B_i-B_i^0\r|\bigr) +\l(|A_i^0|+|B_i^0|\r)\l(1+\overline{\varphi}t\r)\l|\varphi_i-\varphi_i^0\r|,\ i=\overline{1,N},
   \end{aligned}
 \]
which leads to the inequality of condition \textbf{N$_2$} similar to \eqref{inq4Phi_T_l_C2_ex-app}, but with a different constant $c_0'$.

\section{Levitan polynomials}\label{app_Levitan_polnml}

$\indent$Some necessary facts of approximation theory adapted to needs of this article are represented in this Appendix. All the definitions and results are taken from the book \cite{Ahi_LoAT}.

In complex analysis entire function of exponential type is said to be such a function $F(z)$ that for any complex $z$ the inequality
 \begin{equation}
     F(z)\le A e^{B|z|}
   \label{ent_fn_o_exp_type}
 \end{equation}
holds true, where the numbers $A$ and $B$ do not depend on $z$. Infinum $\sigma$ of the constant $B$ values for which inequality \eqref{ent_fn_o_exp_type} takes place is called the exponential type of function $F(z)$ and can be determined by formula
 \[
   \sigma=\underset{|z|\to\infty}{\lim\sup}\,\dfrac{\ln|F(z)|}{|z|}.
 \]
Denote by $\mathcal{B}_\sigma$ the totality of all the entire functions $F(z)$ of exponential type $\le\sigma$ with property $\underset{\lambda\in\mathbb{R}}\sup\,|F(\lambda)|<\infty$.

Let $\mathcal{C}$ be linear normed space of bounded continuous functions $\varphi(\lambda)$, $\lambda\in\mathbb{R}$, with norm $\|\varphi\|=\underset{\lambda\in\mathbb{R}}\sup\,|\varphi(z)|<\infty$. Consider further some set of functions $\mathfrak{M}\subset \mathcal{C}$. For the function of interest $\varphi\in\mathfrak{M}$ suppose that
 \begin{equation}
    \underset{\eta\to1}\lim\,\underset{\lambda\in\mathbb{R}}\sup\,|\varphi(\eta\lambda)-\varphi(\lambda)|=0,
  \label{limsup_dif_phis_2_0}
 \end{equation}
and write
 \[
   \mathcal{A}_\sigma[\varphi]=\underset{F\in \mathcal{B}_\sigma}\inf\,\|\varphi-F\|.
 \]

Let $h(\lambda)$, $\lambda\in\mathbb{R}$, be uniformly continuous function. Denote by
 \[
   \omega(\delta)=\omega(\delta;\,h)=\underset{|\lambda_1-\lambda_2|\le\delta}\sup\,\l|h(\lambda_1)-h(\lambda_2)\r|,\ \lambda_1,\lambda_2\in\mathbb{R},\ \delta>0,
 \]
the modulus of continuity of the function $h$. Obviously $\omega(\delta),\ \delta>0$, is nondecreasing continuous function  tending to zero, as $\delta\to0$.

Let the set $\mathfrak{M}$ introduced above consists of differentiable functions such that for $\varphi\in\mathfrak{M}$ the derivatives $\varphi'(\lambda)=h(\lambda),\ \lambda\in\mathbb{R}$, are uniformly continuous on $\mathbb{R}$. Then for function $\varphi$ satisfying the property \eqref{limsup_dif_phis_2_0} there exists a function $F_\sigma\subset\mathcal{B}_\sigma$ such that (see \cite{Ahi_LoAT}, p. 252)
 \begin{equation}
    \mathcal{A}_\sigma[\varphi]=\|\varphi-F_\sigma\|\le\frac3\sigma\omega\l(\frac1\sigma;\,h\r).
  \label{A_sigma-phi_from_M}
 \end{equation}
The inequality \eqref{A_sigma-phi_from_M} means that for the described function $\varphi$ and any $\delta>0$ there exists a number $\sigma=\sigma(\delta)$ and a function $F_\sigma\in\mathcal{B}_\sigma$ such that
 \[
   \|\varphi-F_\sigma\|<\delta.
 \]

As it has been proved in the 40s of the 20th century by B.M.~Levitan for any function $F\in\mathcal{B}_\sigma$ it is possible to build a sequence of trigonometric sums $T_n(F;\,z),\ n\ge1$, bounded on $\mathbb{R}$ by the same constant as the function $F$, that converges to $F(z)$ uniformly in any bounded part of the complex plane. In particular, for any compact set $K\subset\mathbb{R}$
 \[
    \underset{n\to\infty}\lim\,\underset{\lambda\in K}\sup\,\l|F(\lambda)-T_n(F;\,\lambda)\r|=0.
 \]
Put $s=\frac{\sigma}n$, $n\in\mathbb{N}$; $c_j^{(n)}=sE_s(js)$, $j\in\overline{-n,n}$;
 \[
   E_s(x)=(2\pi)^{-1}\int\limits_{\mathbb{R}}\,e^{-\mathrm{i}xu}\l(\frac{2\sin\frac{su}2}{su}\r)^2F(u)du,\ x\in\mathbb{R}.
 \]
Then the sequence of the Levitan polynomials that corresponds to $F$ can be written as
 \[
    T_n(F;z)=\sum\limits_{j=-n}^n\,c_j^{(n)}e^{\mathrm{i}jsz}.
 \]
\end{appendices}

\renewcommand{\refname}{References}

\end{document}